\definecolor{darkgreen}{RGB}{20,160,20}
\tikzset{curve/.style={settings={#1},to path={(\tikztostart)
    .. controls ($(\tikztostart)!\pv{pos}!(\tikztotarget)!\pv{height}!270:(\tikztotarget)$)
    and ($(\tikztostart)!1-\pv{pos}!(\tikztotarget)!\pv{height}!270:(\tikztotarget)$)
    .. (\tikztotarget)\tikztonodes}},
    settings/.code={\tikzset{quiver/.cd,#1}
        \def\pv##1{\pgfkeysvalueof{/tikz/quiver/##1}}},
    quiver/.cd,pos/.initial=0.35,height/.initial=0}
  \DeclareFontFamily{OT1}{pzc}{}
  \DeclareFontShape{OT1}{pzc}{m}{it}{<-> s * [1.100] pzcmi7t}{}
  \DeclareMathAlphabet{\mathpzc}{OT1}{pzc}{m}{it}
\DeclareMathAlphabet{\mathcal}{LS1}{stixscr}{m}{n}
\theoremstyle{plain}
    \newtheorem{theorem}{Theorem}[section]
    \newtheorem{lemma}[theorem]{Lemma}
    \newtheorem{proposition}[theorem]{Proposition}
    \newtheorem{corollary}[theorem]{Corollary}
    \newtheorem{mainthm}{Theorem}
\theoremstyle{definition}
    \newtheorem{definition}[theorem]{Definition}
    \newtheorem{example}[theorem]{Example}
    \newtheorem{remark}[theorem]{Remark}
\numberwithin{figure}{section}
\numberwithin{table}{section}
 \DeclareFontFamily{U}{manual}{}
 \DeclareFontShape{U}{manual}{m}{n}{ <->  manfnt }{}
 \newcommand{\manfntsymbol}[1]{%
    {\fontencoding{U}\fontfamily{manual}\selectfont\symbol{#1}}}
\endgroup\end{trivlist}}
\def\ZZ{\mathbb{Z}}
\def\SS{\mathbb{S}}
\def\PP{\mathbb{P}}
\def\L{\mathcal{L}}
\def\CC{\mathbb{C}}
\def\|{\Vert}
\def\shom{\mathscr{H}\text{\kern -3pt {\calligra\Large om}}\,}
\def\ord{\operatorname{ord}}
\def\dt4{\text{DT}_4}
\def\->{\rightarrow}
\def\=>{\Rightarrow}
\def\<{\langle}
\def\>{\rangle}
\DeclareMathOperator{\Spec}{Spec}
\newcommand{\X}{\mathcal{X}}
\newcommand{\id}{\mathrm{id}}
\newcommand{\LL}{\mathcal{L}}
\newcommand{\DD}{\mathbb{D}}
\renewcommand{\SS}{\mathbb{S}}
\DeclareMathOperator{\HF}{HF}
\DeclareMathOperator{\Bl}{Bl}
\newcommand{\pr}{\mathrm{pr}}
\newcommand{%
    \import{./figures/}{.pdf_tex}
}[1]{%
    \import{./figures/}{#1.pdf_tex}
}
\title{Contact loci of semihomogeneous singularities}
\author{Eduardo de Lorenzo Poza}
\author{Jiahui Huang}
\begin{document}

\begin{abstract}
    Two of the main open problems in the theory of contact loci are the arc-Floer conjecture, which states that the compactly supported cohomology of the restricted $m$-contact locus and the fixed-point Floer cohomology of the $m$-th iterate of the Milnor monodromy are isomorphic up to a shift; and the embedded Nash problem, which asks for a description of the irreducible components of the unrestricted $m$-contact locus in terms of an embedded resolution of singularities. In this paper we study the geometry of contact loci of semihomogeneous singularities, and use our results to give an affirmative answer to the arc-Floer conjecture (under some conditions on the dimension and the degree) and a complete solution to the embedded Nash problem.
\end{abstract}

\maketitle
\thispagestyle{empty}


\section{Introduction}

Given an integer $m \geq 1$, the $m$-contact locus $\X_m$ associated to a hypersurface singularity is the subscheme of the jet space formed by jets that have order of contact $m$ with the singularity.
Contact loci were first defined by Denef and Loeser \cite{DL-lefnum} in the context of motivic integration, where they appeared as the coefficients of the motivic zeta function, see \cite[\S 7.4]{chambertloir2018}.

While the geometry of contact loci is interesting in its own right because the spaces $\X_m$ may be seen as invariants of the singularity, its study is also related to a number of open problems in singularity theory, such as the monodromy conjecture, the arc-Floer conjecture, and the embedded Nash problem.
For example, contact loci of plane curves are well understood ---their connected components are smooth and the topology of each component is completely described \cite[Thm. 3.5]{arc-floer-curves}---, and this is a key ingredient in the proof of the arc-Floer conjecture for plane curves \cite[Thm. 1.1]{arc-floer-curves} and the resolution of the embedded Nash problem for plane curves \cite[Thm. 1.22]{irred-curve}.

In this paper, we study the geometry of contact loci associated to semihomogeneous singularities, see Definition~\ref{def:semihomogeneous}.
Our main result in this direction is a description of the graded pieces of the filtration induced by the order of the jets, see Proposition~\ref{prop:filtered-topology}.
Even though each graded piece is relatively simple to describe, the relationship between the different pieces is quite intricate.
In contrast to the case of plane curves, we show that contact loci of semihomogeneous singularities may have several irreducible components and the irreducible components could be singular, see Example~\ref{ex:counterexample}.
Nevertheless, we also find that the cohomology of each contact locus is the direct sum of the cohomologies of its irreducible components.

As corollaries to our study of the geometry of contact loci, we show that the arc-Floer conjecture holds for semihomogeneous singularities under some conditions on the degree, and we give a complete solution to the embedded Nash problem for this kind of singularities.
Let us now give a brief overview of both problems and the techniques that we use in their resolution.

\subsection{The arc-Floer conjecture}

The \emph{arc-Floer conjecture} is a conjectural relationship between two invariants of an isolated hypersurface singularity: one of symplectic origin ---Floer cohomology of Milnor monodromy iterates---, and one of algebraic origin ---compactly supported cohomology of the contact loci. 
Its origin goes back to an observation by Seidel about a result by Denef and Loeser that we will now recall.

Let $f: (\CC^n, 0) \to (\CC, 0)$ be a holomorphic function germ with an isolated singularity at the origin. 
A classical result by Milnor (see \cite[\S 4]{seade2019} for a complete list of references) says that for small $\varepsilon, \delta > 0$, the restriction $f|: f^{-1}(\partial \mathbb{D}_\delta) \cap \overline{\mathbb{B}}_\varepsilon \longrightarrow \partial \mathbb{D}_\delta$ is a locally trivial fibration, whose monodromy (well defined up to isotopy) we call the \emph{Milnor monodromy} and denote by $\phi$.
On the other hand, Denef and Loeser considered the \emph{(restricted) contact loci} $\mathcal{X}_m$ of $f$, which are defined for each integer $m \geq 1$ as the subscheme of the $m$-th jet scheme $\L_m(\CC^n)$ formed by jets through the origin that have contact order $m$ with $f$.

The aforementioned result of Denef and Loeser \cite[Thm. 1.1]{DL-lefnum} states that the compactly supported Euler characteristic of the $m$-contact locus $\chi_c(\mathcal{X}_m)$ is equal to the Lefschetz number of the $m$-th iterate of the monodromy $\Lambda(\phi^m)$. 
Seidel observed that this Lefschetz number is also an Euler characteristic, namely the Euler characteristic of the Floer cohomology groups $\HF^\bullet(\phi^m, +)$.
The arc-Floer conjecture predicts that the numerical equality $\chi_c(\X_m) = \Lambda(\phi^m)$ actually comes from an isomorphism between the underlying the cohomology groups \cite[Conj. 1.5]{coho-contact}, see \eqref{eq:arc-floer-conjecture}.

Aside from Denef and Loeser's result, there is further evidence in favor of the arc-Floer conjecture.
First, there are two spectral sequences with isomorphic first pages (up to a shift) that converge to either of the two cohomology groups.
They were introduced by McLean \cite[Thm. 1.2]{mclean} and Budur, Fernández de Bobadilla, L\^{e} and Nguyen \cite[Thm. 1.1]{coho-contact}, respectively.
Furthermore, it is known that the isomorphism predicted by the arc-Floer conjecture exists when $m$ is the multiplicity of $f$ \cite[Prop. 1.6]{coho-contact}.
Moreover, the conjecture has been shown to hold for all $m$ in the case of plane curves by de la Bodega and the first author \cite[Thm. 1.1]{arc-floer-curves}.

Here we show that the arc-Floer conjecture holds for semihomogeneous singularities of degree $d \geq 2$ in $n \geq 3$ variables, under some conditions on the degree (see Theorem~\ref{thm:arc-floer-semihomogeneous} for the proof and the sharpest conditions we are able to get, also Figure~\ref{fig:scatter-plot}).

\begin{mainthm}
    Let $f: (\CC^n, 0) \to (\CC,0)$ be a semihomogeneous germ of degree $d$ in $n \geq 3$ variables. If $2 \leq d < n/2$ or $d > 2n - 2$, then \eqref{eq:arc-floer-conjecture} holds.
\end{mainthm}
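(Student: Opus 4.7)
The plan is to establish the arc-Floer isomorphism \eqref{eq:arc-floer-conjecture} by computing both sides explicitly and matching them degree by degree. On the contact loci side, Proposition~\ref{prop:filtered-topology} describes the graded pieces of the filtration of $\X_m$ by jet order, and the splitting of $H^\bullet_c(\X_m)$ as a direct sum over the irreducible components (mentioned in the introduction as one of the main geometric results) reduces the computation to a finite tally of relatively simple fibration-type blocks supported on strata of the smooth projective hypersurface cut out by the degree-$d$ leading form. One then reads off in which degrees $H^\bullet_c(\X_m)$ can be nonzero; the resulting window is controlled by linear combinations of $n$, $d$, and $m$.

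On the Floer side, because a semihomogeneous germ is topologically equivalent to its homogeneous leading form, the Milnor monodromy $\phi$ is (isotopic to) the geometric monodromy of a homogeneous polynomial of degree $d$, and hence periodic of order $d$ with an explicit description in terms of a single weighted blow-up resolution. Applying McLean's spectral sequence then yields $\HF^\bullet(\phi^m, +)$ as a sum of contributions from the exceptional divisor components fixed by $\phi^m$, and again the answer sits in a band of degrees determined by $n$ and $d$. Note that at $m$ equal to the multiplicity $d$ one already has the isomorphism by \cite[Prop.\ 1.6]{coho-contact}, which provides a useful sanity check for the block-by-block comparison.

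The matching step is where the degree hypotheses enter. When $2 \leq d < n/2$ or $d > 2n-2$, the cohomology on both sides concentrates in sufficiently few degrees that the Denef--Loeser numerical identity $\chi_c(\X_m) = \Lambda(\phi^m)$, together with the piece-by-piece computation of Betti numbers, forces an isomorphism in every degree; the inequalities are exactly the dimension bounds that prevent the respective nontrivial cohomology ranges from overlapping with their shifts. The main obstacle, and the reason for the gap $n/2 \leq d \leq 2n-2$, is that in the intermediate range the cohomology on each side can spread across overlapping degrees where cancellation between pieces could occur, so the Euler characteristic no longer determines the individual Betti numbers. Ruling this out would apparently require a direct term-by-term comparison of the Budur--Fern\'andez de Bobadilla--L\^{e}--Nguyen and McLean spectral sequences past their first pages, which a purely numerical strategy cannot provide, and this is what the statement avoids by restricting $d$.
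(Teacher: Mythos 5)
Your strategy has several genuine gaps, and the central one is the ``matching step.'' The Denef--Loeser identity $\chi_c(\X_m) = \Lambda(\phi^m)$ is an equality of alternating sums; even if each side's cohomology is confined to a narrow band of degrees, this single number cannot recover the individual Betti numbers unless everything is concentrated in one degree on each side, which is false here (the groups spread over many degrees, roughly one band per $m$-divisor of the resolution). Moreover the groups carry torsion --- for $n$ odd the cyclic covers contribute a $\mathbb{Z}/(d)$ summand (Corollary~\ref{cor:homology-computation}) --- which no Euler-characteristic argument can detect. The hypotheses $2 \leq d < n/2$ or $d > 2n-2$ do not function as you describe (preventing ``overlap with shifts''); in the paper they do two precise jobs: they force condition~\eqref{eq:condition-degeneration}, which kills all differentials in the McLean spectral sequence because a nonzero $d_\ell$ would require both source and target homology groups of two distinct cyclic covers to be nonzero in incompatible degrees, and they force condition~\eqref{eq:condition-filtration}, which guarantees that each total degree of the $E_\infty$ page has at most one nonzero graded piece, so that no extension problems obstruct reconstructing $\HF^\bullet$ and $H^\bullet_c$ from their associated gradeds.

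A second, logically prior gap: you invoke the splitting of $H^\bullet_c(\X_m)$ as a direct sum over irreducible components as an input, but that splitting \emph{is} the $E_1$-degeneration of the order-filtration spectral sequence (Theorem~\ref{thm:degeneration-clss}), which is one of the main things to be proved. It is established by a geometric argument --- embedding $F_{(p+\ell)}\X_m$ into an auxiliary space $Z$, identifying the relevant restriction map with a Gysin-type map controlled by the Euler class of the normal bundle $N_{CS^\circ/\CC^n}$, and showing that Euler class vanishes because it is pulled back through the complement of the zero section of $\mathscr{O}_S(-1)$. Nothing in your outline supplies this. Finally, on the Floer side the compactly supported monodromy is not literally periodic as an exact symplectomorphism, and $\HF^\bullet(\phi^m,+)$ is not the cohomology of a fixed-point set; the only available computation goes through degeneration of McLean's spectral sequence for the explicit minimal $m$-separating resolution, which again you assert rather than prove. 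The correct architecture is: isomorphic $E_1$ pages (known from \cite{coho-contact}), degeneration of both spectral sequences, and the one-graded-piece-per-degree condition; the Euler characteristic identity plays no role in the proof.
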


The proof of this theorem involves a careful study of the spectral sequences by McLean and Budur et al.
Since the relationship between the $E_1$ pages of both spectral sequences is already known, we need to study the differentials.
The two spectral sequences depend on a choice of embedded resolution of singularities. 
By considering a particularly simple $m$-separating resolution, we show in Theorem \ref{thm:degeneration-mclss} and Theorem \ref{thm:degeneration-clss} that both spectral sequences degenerate at the first page (in general for the cohomology of contact loci, and under some conditions on $d$ and $n$ in the case of the McLean spectral sequence).
Note that this $E_1$-degeneration is somewhat unexpected because of our results on the geometry of the contact locus --- we show that the irreducible components of $\X_m$ are not disjoint, but $E_1$-degeneration means that $H^\bullet_c(\X_m)$ is a direct sum of the cohomologies of the irreducible components.


\subsection{The embedded Nash problem}

The \emph{embedded Nash problem}, first proposed in \cite[Rmk. 2.8]{ein2004}, consists in determining the irreducible components of $\X_m$ in terms of an ($m$-separating) resolution of singularities, see \eqref{eq:embedded-nash-problem} for the precise statement.
It inherits its name from the classical Nash problem, which asks for an analogous description of the irreducible components of the subset of arcs that pass through the singular locus.

The embedded Nash problem has been solved for several families of singularities, such as 
toric invariant ideals given by a cone inside an affine toric variety \cite[Thm. 5.11]{ishii2004}, 
hyperplane arrangements \cite[\S 7]{contact-hyperplane}, \cite[\S 4]{irred-curve}, 
plane curves \cite[\S 6, \S 7]{irred-curve}, \cite[Thm. 3.5]{arc-floer-curves}, 
and curves in an arbitrary surface \cite[Thm. 3.4]{delabodega2025}.

As a corollary to our study of the geometry of contact loci, we give a complete solution to the embedded Nash problem for semihomogeneous singularities of degree $d$ in arbitrary dimension $n \geq 2$.
In fact, we not only describe the divisorial $m$-valuations that correspond to irreducible components of the $m$-contact locus, but also the dlt and essential $m$-valuations, see Definition~\ref{def:m-valuations}. These are the divisorial valuations that appear in \cite[Thm. 1.13]{irred-curve}, an analog of de Fernex and Docampo's result for the Nash problem \cite[Thm. 1.1]{defernex2016}.

\begin{mainthm}
    Let $f: (\CC^n, 0) \to (\CC,0)$ be an semihomogeneous germ of degree $d$ in $n \geq 2$ variables. 
    Then we have the following solution to \eqref{eq:embedded-nash-problem}:
    \begin{enumerate}[label=(\roman*)]
        \item If $d < n$, then the triple $(\mathbb{C}^n, f^{-1}(0), 0)$ has no dlt $m$-valuations; it has no contact valuations if $m < d$, and exactly one contact $m$-valuation if $m \geq d$; and it has $\lfloor m/d \rfloor$ essential $m$-valuations.
        
        \item If $d \geq n$, then the dlt, contact and essential $m$-valuations of the triple $(\mathbb{C}^n, f^{-1}(0), 0)$ coincide, and there are exactly $\lfloor m/d \rfloor$ of them.
    \end{enumerate}
\end{mainthm}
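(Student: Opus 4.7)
The plan is to construct an explicit $m$-separating log resolution of $(\CC^n,F)$, where $F=f^{-1}(0)$, and read off the three classes of $m$-valuations from its numerical data combined with the geometric description of $\X_m$ in Proposition~\ref{prop:filtered-topology}. The starting point is the blowup at the origin, $\pi_0\colon Y_0\to\CC^n$. Semihomogeneity means that the degree-$d$ part $f_d$ of $f$ cuts out a smooth projective hypersurface $Z\subset\PP^{n-1}\simeq E_0$ inside the exceptional divisor, and the strict transform $\tilde F$ meets $E_0$ transversally along $Z$, so $\pi_0$ is already an embedded resolution with numerical data $N_{E_0}=d$, $A_{E_0}=n$. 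To achieve $m$-separation I iteratively blow up $\tilde F\cap E_i$, a smooth codimension-two centre, obtaining additional exceptional divisors $E_1,E_2,\ldots$ with $N_{E_i}=d+i$ and $A_{E_i}=n+i$, and I stop once every pair of meeting components has $N$-sum larger than $m$.

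With this resolution in hand, the candidate divisorial $m$-valuations are the rescalings $k\cdot\mathrm{val}_{E_i}$ with $k N_{E_i}\leq m$, together with the monomial valuations attached to the intersections $E_{i-1}\cap E_i$ and $\tilde F\cap E_k$. Only $E_0$ is forced to appear in every $m$-separating model, since the higher $E_i$'s can be replaced by the exceptional divisors of different choices of centres, so the essential $m$-valuations are the $k\cdot\mathrm{val}_{E_0}$ for $k=1,\ldots,\lfloor m/d\rfloor$, matching the claimed count in both regimes. The dichotomy $d<n$ versus $d\geq n$ is controlled by the log canonical threshold of $f$ at the origin, equal to $\min(n/d,1)$: the pair $(\CC^n,F)$ is klt at $0$ iff $d<n$, in which case the trivial model is already a dlt modification and no exceptional divisor supports a dlt $m$-valuation; when $d\geq n$ the minimal dlt modification must extract $E_0$, and unpacking the dlt condition from Definition~\ref{def:m-valuations} yields exactly the valuations $k\cdot\mathrm{val}_{E_0}$ with $kd\leq m$.

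For the contact $m$-valuations I would use Proposition~\ref{prop:filtered-topology} to describe the stratum of $\X_m$ made of arcs whose order of vanishing along $E_0$ equals $k$: it fibres over an open subset of $Z$, with fibre controlled by the remaining $m-kd$ contact with $\tilde F$. A direct dimension count then shows that when $d<n$ only the top stratum $k=\lfloor m/d\rfloor$ attains the generic dimension of $\X_m$, giving a single contact $m$-valuation for $m\geq d$ and none for $m<d$; when $d\geq n$ all $\lfloor m/d\rfloor$ strata are simultaneously top-dimensional and each yields a contact $m$-valuation, forcing the three notions to coincide. One then checks separately that no intersection stratum coming from a meeting $E_{i-1}\cap E_i$ or $\tilde F\cap E_k$ produces an additional contact $m$-valuation, since their associated strata have strictly smaller dimension than at least one of the pure $E_i$-strata.

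The main obstacle I expect is this dimension comparison, which is the source of the $d<n$ versus $d\geq n$ dichotomy on the contact side: one must carefully track the contribution of the $\tilde F$-direction to the dimension of each stratum and combine it with the fibration structure from Proposition~\ref{prop:filtered-topology} to rule out both the lower-$k$ pure strata in case (i) and all intersection strata in both cases from being irreducible components of $\X_m$.
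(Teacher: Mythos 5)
Your overall strategy (explicit resolution, numerical data, stratification of the contact locus by order) is the right one and matches the paper's, and your treatment of the contact $m$-valuations is essentially the paper's argument: a codimension comparison between the strata $\X^\infty_{m,E}$ combined with semicontinuity of the order and the count of defining equations. However, there are three genuine problems with the rest.

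First, the resolution you construct is not $m$-separating. Blowing up only $\widetilde{F}\cap E_i$ produces a chain $E_0 - E_1 - \cdots - E_k - \widetilde F$ with $N_{E_i}=d+i$, but the pair $E_0\cap E_1$ has $N$-sum $2d+1$, which is $\leq m$ as soon as $m\geq 2d+1$, and your procedure never separates it. The correct construction (\S\ref{subsec:m-sep-resolution}) blows up \emph{every} adjacent pair with $N$-sum $\leq m$; the resulting divisors are indexed by coprime pairs $(\kappa,r)$ with $N_{(\kappa,r)}=\kappa+rd$ and $\nu_{(\kappa,r)}=\kappa+rn$, and the exceptional $m$-divisors are the $\lfloor m/d\rfloor$ divisors $E_{-1},\dots,E_{-\lfloor m/d\rfloor}$ corresponding to the solutions of $a+bd=m$. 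Getting these numerics right matters for everything downstream.

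Second, your description of the essential (and dlt) $m$-valuations as the rescalings $k\cdot\mathrm{val}_{E_0}$, $k=1,\dots,\lfloor m/d\rfloor$, is wrong under Definition~\ref{def:m-valuations}: an $m$-valuation is $\ord_E$ for a prime divisor $E$ on an $m$-separating resolution, and $\ord_{E_{-k}}$ is not proportional to $k\cdot\ord_{E_{(0,1)}}$ (it takes the values $k/\gcd$ on coordinates and $m/\gcd$ on $f$, versus $k$ and $kd$). The count $\lfloor m/d\rfloor$ comes out right only by accident. Moreover, your claim that "only $E_0$ is forced to appear in every $m$-separating model" is the opposite of what is true: Proposition~\ref{prop:minimal-m-separating-log-res} shows that \emph{every} $m$-separating log resolution factors through $\mu$, so \emph{all} $\lfloor m/d\rfloor$ exceptional $m$-divisors of $\mu$ have divisorial centers on every such resolution and hence are essential. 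Your argument, taken at face value, would give only the valuations supported on $E_0$.

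Third, the dlt case $d\geq n$ is essentially unaddressed. Asserting that "the minimal dlt modification must extract $E_0$" does not show that all $\lfloor m/d\rfloor$ exceptional $m$-divisors of $\mu$ survive on a dlt modification. The paper must prove that $(X,\mu^*(D)_{\mathrm{red}})$ is its own minimal model over $(\CC^n,D)$, i.e. that $K_X+\Delta$ is $\mu$-nef; this requires the intersection-theoretic computations of Lemmas~\ref{lem:moving-curves}--\ref{lem:normal-bundle-computation} (normal bundles of the $E_{(\kappa,r)}$ and the reduction of arbitrary contracted curves to sections plus fibers). The $d<n$ half of the dlt statement (log discrepancies $\nu_E-N_E>0$ forcing the trivial dlt modification) is correct as you sketch it. Finally, note that with a genuinely $m$-separating resolution there are no "intersection strata" to rule out in the contact-valuation argument: the decomposition \eqref{eq:XmE-decomposition} is indexed by single divisors precisely because of $m$-separation, so that worry disappears once the resolution is fixed.
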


\begin{proof}
    See Proposition~\ref{prop:contact-m-valuations}, Proposition~\ref{prop:dlt-m-valuations} and Corollary~\ref{cor:essential-m-valuations}.
\end{proof}

\section*{Acknowledgements}
We would like to thank Nero Budur, Javier Fernández de Bobadilla and Matthew Satriano for independently suggesting and supervising this project, as well as providing helpful ideas and encouragements. 
We further wish to express gratitude towards Adrian Dawid and Devlin Mallory for the helpful discussions.
The first author was supported by 1187425N from FWO, Research Foundation, Flanders.

\section{Floer cohomology of monodromy iterates}
\label{sec:floer}

Let $f: (\CC^n, 0) \to (\CC, 0)$ be a holomorphic function germ with an isolated critical point at $0 \in \CC^n$.
The goal of this section is to study the fixed-point Floer cohomology groups of monodromy iterates $\HF^\bullet (\phi^m, +)$ in the case where $f$ is a homogeneous polynomial of degree $d$.

\subsection{Preliminaries}

A \emph{log resolution} of a hypersurface singularity germ $f: (\CC^n, 0) \to (\CC, 0)$ is a proper birational morphism $\mu: (X, (f \circ \mu)^{-1}(0)) \to (\CC^n, f^{-1}(0))$ from a smooth variety $X$ such that the total transform $(f \circ \mu)^{-1}(0)$ is a divisor with simple normal crossings. 
We denote by $\mathcal{E}$ the set of irreducible components of the total transform, so $(f \circ \mu)^{-1}(0) = \sum_{E \in \mathcal{E}} N_E E$, where each number $N_E \coloneqq \ord_{E} f$ is called the \emph{multiplicity} of $f \circ \mu$ along the irreducible divisor $E$.
We denote by $\mathcal{P} \subseteq \mathcal{E}$ the subset of irreducible components of the strict transform of $(f \circ \mu)^{-1}(0)$ under $\mu$. 
We also define the \emph{log discrepancy} of each $E \in \mathcal{E}$ by $\nu_E \coloneqq \ord_{E} (K_{X/\CC^n}) + 1$. 

Given an integer $m \geq 1$, the divisors $E$ such that $N_E$ divides $m$ are called \emph{$m$-divisors}. 
If, whenever $E,F\in \mathcal{E}$ and $E \cap F \neq \varnothing$, we have $N_E + N_F > m$, then the resolution $\mu$ is said to be \emph{$m$-separating}.
It is always possible to perform further blow-ups on a given log resolution to make it $m$-separating \cite[Lemma 2.9]{coho-contact}.

For every divisor $E \in \mathcal{E}$ we denote $E^\circ \coloneqq E \setminus (\cup_{F \in \mathcal{E} \setminus \{E\}} F)$. 
In \cite[\S 2.3]{DL-lefnum}, Denef and Loeser constructed the following degree $N_E$ unramified covering of $E^\circ$. 
For any affine open $U \subset X$ such that $f \circ \mu = uv^{N_E}$, with $u \in \mathscr{O}^\times(U)$ and $v \in \mathscr{O}(U)$, we consider the pullback 
\begin{equation} \label{eq:pullback-covering}
\begin{tikzcd} 
	{\widetilde{E^\circ \cap U}} & {E^\circ \cap U} \\
	{\mathbb{C}^\times} & {\mathbb{C}^\times}
	\arrow["c_{E, u}", from=1-1, to=1-2]
	\arrow[from=1-1, to=2-1]
	\arrow["\Big\lrcorner"{anchor=center, pos=0.125}, draw=none, from=1-1, to=2-2]
	\arrow["u", from=1-2, to=2-2]
	\arrow["{z \mapsto z^{N_E}}", from=2-1, to=2-2]
\end{tikzcd}
\end{equation}
If on a different affine open $U' \subset X$ we have an expression of the same form $f \circ \mu = u'(v')^{N_E}$, with $u' \in \mathscr{O}^\times(U')$ and $v' \in \mathscr{O}(U')$, then in the intersection $U \cap U'$ we have $u = u'(v'/v)^{N_E}$ with $v'/v \in \mathscr{O}^\times(U \cap U')$, so by the proof of \cite[Lemma 1.4.1]{DL-motivic} there is a canonical isomorphism
\[\begin{tikzcd}[column sep=tiny, row sep=small]
	{c_{E,u}^{-1}(E^\circ \cap U \cap U')} && {c_{E,u'}^{-1}(E^\circ \cap U \cap U')} \\
	& {E^\circ \cap U \cap U'}
	\arrow["\cong", from=1-1, to=1-3]
	\arrow["{c_{E,u}}"', from=1-1, to=2-2]
	\arrow["{c_{E,u'}}", from=1-3, to=2-2]
\end{tikzcd}\]
Taking an affine open cover of $E^\circ$ and gluing via these canonical isomorphisms, we get a covering $c_E: \widetilde{E}^\circ \to E^\circ$ that we call the \emph{Denef-Loeser covering of $E^\circ$}. 
McLean has given a different description of this covering in terms of the fundamental group, see \cite[Thm. 1.2]{mclean}.

One of the most important objects associated to the hypersurface singularity $f$ is its \emph{Milnor fibration}. 
Milnor \cite[Ch. 5]{milnor1968} showed that there exist $0 < \delta \leq \varepsilon \leq 1$ such that the restriction
\[
f|: f^{-1}(\partial \mathbb{D}_\delta) \cap \overline{\mathbb{B}}_\varepsilon \longrightarrow \partial \mathbb{D}_\delta
\]
is a $C^\infty$-locally trivial fibration whose isotopy type does not depend on $\delta, \varepsilon$. 
Moreover, this fibration is a Liouville fibration when endowed with the restriction of the standard Liouville form $\lambda_\mathrm{std} \in \CC^n$.
The fiber $M = f^{-1}(\delta) \cap \overline{\mathbb{B}}_\varepsilon$ is called the \emph{Milnor fiber} of $f$, and the fibration admits a compactly supported monodromy $\phi: M \to M$ that is exact, i.e. such that the 1-form $\phi^* \lambda_\mathrm{std} - \lambda_\mathrm{std}$ is exact.

In this setting, it is possible to define the \emph{fixed-point Floer cohomology groups} $\HF^\bullet (\phi^m, +)$ associated to each monodromy iterate $\phi^m$, see \cite{dostoglou-salamon94}, \cite{seidel}, \cite{mclean},  \cite{Uljarevic}.
We do not recall their definition here, since we will not need it.
Instead, we recall the spectral sequence \eqref{eq:mclss} that we will use to compute them.
We remark that this spectral sequence has been put to use with great success in the last years.
For example McLean has used it to recover classical invariants of the singularity, such as the multiplicity or the log canonical threshold \cite[Cor. 1.4]{mclean}; and Fernández de Bobadilla and Pe{\l}ka have used it in their proof of the Zariski multiplicity conjecture for families \cite[Thm. 1.1]{bobadilla2024}.

\begin{theorem}[McLean spectral sequence, {\cite[Thm. 1.2]{mclean}}] \label{thm:mclss}
    Let $\mu:(X,(f\circ \mu)^{-1}(0))\->(\CC^n,f^{-1}(0))$ be an $m$-separating resolution. Fix an ample divisor $H = \sum_{E \in \mathcal{E}} b_E E$ on $X$ such that $b_E \in \mathbb{Z}_{\leq 0}$ for all $E \in \mathcal{E}$. 
    Then there is a spectral sequence $\{\phantom{}_\text{McLean} E_\ell^{p,q}\}_{\ell \geq 0}$ converging to Floer cohomology $\HF^\bullet(\phi^m, +)$ whose first page is
    \begin{equation} \label{eq:mclss}
        \phantom{}_\text{McLean} E_1^{p,q} = \bigoplus_{\substack{E \in \mathcal{E} \setminus  \mathcal{P}, \ N_E \mid m \\ mb_E/N_E = p}} H_{n-1-(p+q)-2m((\nu_E/N_E) - 1)}(\widetilde{E}^\circ; \mathbb{Z}).
    \end{equation}
\end{theorem}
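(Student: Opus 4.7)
The plan is to build a symplectic model of the Milnor fibration from the log resolution $\mu$, realize $\phi^m$ as a Hamiltonian time-$1$ map on it, and extract \eqref{eq:mclss} from the action filtration on Floer cochains. I would use the ample divisor $H = \sum_E b_E E$ with $b_E \leq 0$ to produce a Kähler form on $X$ whose associated Liouville structure, after excising a neighborhood of the strict transforms $\bigcup_{E \in \P} E$, yields a Liouville domain $W \subset X$ symplectomorphic to the Milnor fiber $M$. The coefficients $b_E$ will be arranged to record (up to rescaling by $m/N_E$) the action values at fixed points above $E^\circ$, and the sign condition $b_E \leq 0$ is what guarantees the required concavity of the Kähler form near each $E \in \mathcal{E} \setminus \P$.

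Next, I would realize $\phi^m$ as the time-$1$ map of a Hamiltonian $H_t$ that, in Denef--Loeser local coordinates $f \circ \mu = u v^{N_E}$ near a point of $E^\circ$, rotates the $v$-coordinate through angle $2\pi m/N_E$ and leaves the remaining directions essentially undisturbed. The stratum $E^\circ$ is then fixed precisely when $N_E \mid m$; for $N_E \nmid m$ the fixed set is pushed off $W$ by the rotation. A small Morse perturbation along each surviving fixed-point family makes $H_t$ Morse--Bott, and the $N_E$ choices of an $N_E$-th root of the unit $u$ in the local model promote the fixed-point family above $E^\circ$ to the Denef--Loeser covering $\widetilde{E}^\circ$ constructed in \eqref{eq:pullback-covering}. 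A direct computation on the local model then shows that the action of a fixed point over $E^\circ$ equals $m b_E / N_E$ (yielding the $p$-grading), while the Conley--Zehnder index of the linearized return map on the normal bundle of $E^\circ$ in $X$ equals $2m(\nu_E/N_E - 1)$, the log discrepancy $\nu_E$ appearing as the rotation speed on the canonical bundle.

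Finally, I would filter the Floer cochain complex by action and pass to the associated spectral sequence. The $m$-separating hypothesis is essential here: it forces any two divisors $E, F$ contributing at level $m$ to have disjoint Denef--Loeser families, so distinct fixed-point families do not mix at the filtration step. The standard Morse--Bott--Floer identification then expresses $E_1^{p,q}$ as a sum of homologies of the fixed-point manifolds shifted by $(n-1) - (p+q) - \CZ$, which is exactly the formula \eqref{eq:mclss}.

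The hardest step will be the Conley--Zehnder computation on the normal bundle of $E^\circ$: one must trivialize the canonical bundle along the family and track the determinant of the linearized flow against that trivialization to obtain a well-defined integer grading, and it is precisely this bookkeeping that produces the log discrepancy $\nu_E$. The secondary obstacle will be to verify that the Liouville model can be chosen compatibly with the ampleness data of $H$ and that the resulting Hamiltonian flow is truly exact-isotopic to the Milnor monodromy iterate $\phi^m$ — this is essentially a Kähler-geometric uniqueness statement for the Liouville completion, together with a careful check that the asymptotics near $\cup_{E \in \P} E$ match the standard model of the Milnor fibration.
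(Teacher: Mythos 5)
This theorem is not proved in the paper at all: it is quoted verbatim from McLean's work (with the degree shift corrected as explained in Remark~\ref{rmk:index-fix}), and the paper only ever uses it as a black box. What you have written is therefore not an alternative to anything in this paper but a high-level outline of McLean's original argument, and as such it is a faithful one: the Liouville/open-book model built from the $m$-separating resolution, the Morse--Bott families of fixed points of the perturbed return map lying over the strata $E^\circ$ with $N_E \mid m$, their identification with the coverings $\widetilde{E}^\circ$ via the $N_E$-th roots of the unit $u$ in the local model $f\circ\mu = uv^{N_E}$ (exactly matching \eqref{eq:pullback-covering}), the action values $mb_E/N_E$ coming from the ample divisor, the Conley--Zehnder contribution $2m(\nu_E/N_E-1)$ coming from the trivialization of the canonical bundle (this is where the discrepancy enters, and also where the $2m$ correction of Remark~\ref{rmk:index-fix} lives), and the action filtration producing the $E_1$ page. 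Two points in your sketch are imprecise and would need repair in a real write-up. First, ``a Liouville domain $W\subset X$ symplectomorphic to the Milnor fiber'' cannot be literally correct for dimension reasons: $M$ has real dimension $2n-2$ while any domain in $X$ has dimension $2n$; what McLean actually constructs is a model of the Milnor \emph{fibration} in a neighborhood of the total transform, whose fiber is $M$ and whose fixed-point families project to the divisors. Second, the role of the $m$-separating hypothesis is not that the Denef--Loeser families of distinct divisors are disjoint (they always are, since they live over disjoint strata $E^\circ$), but that $N_E+N_F>m$ for intersecting $E,F$ rules out fixed components of $\phi^m$ lying over the deeper strata $E\cap F$, so that \emph{all} generators are accounted for by the codimension-one strata appearing in \eqref{eq:mclss}. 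These are fixable, and the genuinely hard steps you flag (the index bookkeeping and the comparison of the resolution model with the standard Milnor fibration) are indeed the substance of McLean's proof.
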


\begin{remark} \label{rmk:index-fix}
    The degree of the homology groups in the first page of \eqref{eq:mclss} differ from the ones in the reference \cite[Thm. 1.2]{mclean} by a shift of $2m$ because of a known mistake in the computation \cite[Thm. 5.41(3)]{mclean}, see \cite[Rmk. 7.6]{bobadilla2024}.
\end{remark}

\subsection{An $m$-separating log resolution for a semihomogeneous singularity}
\label{subsec:m-sep-resolution}

Let us recall the definition of a semihomogeneous germ.

\begin{definition}[{cf. \cite[Ch. 12]{arnold1985}}] \label{def:semihomogeneous}
    A power series $f \in \mathbb{C} \llbracket x_1, \ldots, x_n \rrbracket$ is said to be \emph{semihomogeneous of degree $d$} if it is of the form $f = h + F$, where $h$ is a homogeneous polynomial of degree $d$ with $0$ as an isolated critical point and $F$ is a power series of order strictly greater than $d$.
\end{definition}

From now on we assume that $f = h + F$ is a semihomogeneous power series of degree $d$. 
Equivalently, the initial term $h$ defines a smooth projective hypersurface $S \subset \mathbb{P}^{n-1}$ of degree $d$. 
We assume $n \geq 3$ so that $S$ is connected; this is not a problem, since the Floer cohomology groups have already been computed for $n=2$ in full generality, see \cite[Cor. 5.5]{arc-floer-curves}.

Our first goal is to compute the first page of the McLean spectral sequence \eqref{eq:mclss}. 
To do so, we need to fix an $m$-separating resolution of singularities and to understand the topology of the unramified cyclic covers $\widetilde{E}^\circ$.
Note that the blow-up of $\mathbb{C}^n$ at the origin $\beta: \Bl_{0} \mathbb{C}^n \to \mathbb{C}^n$ is already a log resolution of the germ $f^{-1}(0)$.
Iteratively blowing up the intersections $E \cap F$ for $E, F$ exceptional divisors or strict transforms with $N_E + N_F \leq m$, we obtain a sequence of blow-ups 
\begin{equation} \label{eq:sequence-blow-ups}
    X = X^{(\ell)} \xrightarrow{\mu^{(\ell)}} X^{(\ell-1)} \xrightarrow{\mu^{(\ell-1)}} \cdots \xrightarrow{\mu^{(2)}} X^{(1)} = \Bl_{0} \mathbb{C}^n \xrightarrow{\mu^{(1)} = \beta} \mathbb{C}^n,
\end{equation}
whose composition $\mu: X \to \mathbb{C}^n$ is an $m$-separating log resolution. 
In fact, we will see in Proposition~\ref{prop:minimal-m-separating-log-res} that this is the minimal $m$-separating log resolution of the triple $(\mathbb{C}^n, f^{-1}(0), 0)$. 
The dual graph of $\mu$ is a chain (meaning that it is linearly ordered). 
The strict transform of $f^{-1}(0)$ (which is irreducible because we are assuming $n \geq 3$, and we denote it by $E_{(1,0)}$) lies on one end of the chain.
The strict transform of the exceptional divisor of the first blow-up $\beta$ (which we denote by $E_{(0,1)}$) lies on the other end of the chain, see Figure \ref{fig:resolution}.

\begin{figure}[ht]
    \centering
    \makebox[\textwidth][c]{\resizebox{0.7\linewidth}{!}{%
    \import{./figures/}{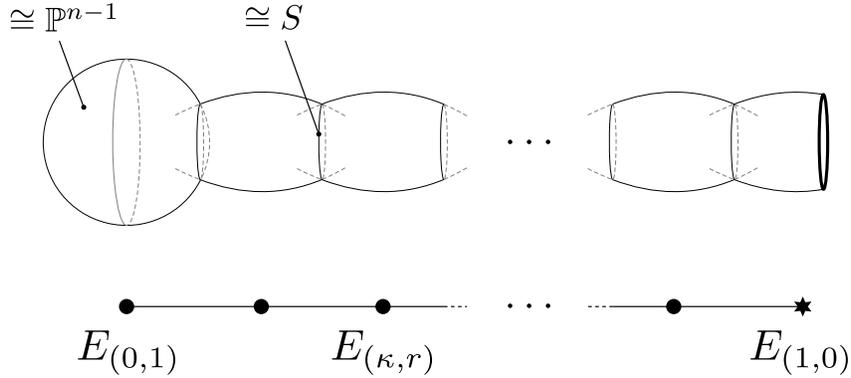}
}}
    \caption{Sketch of the total transform $(f \circ \mu)^{-1}(0)$ of the $m$-separating log resolution $\mu: X \to \mathbb{C}^n$ and its dual graph.}
    \label{fig:resolution}
\end{figure}

Starting with the labels $E_{(0,1)}$ and $E_{(1,0)}$ that we have just defined, we inductively define $E_{(\kappa + \kappa', r + r)}$ to be the exceptional divisor obtained by blowing up the intersection of $E_{(\kappa, r)}$ and $E_{(\kappa', r')}$.
This way, we obtain a bijection between the exceptional divisors that may appear in a log resolution of the form \eqref{eq:sequence-blow-ups} and pairs of coprime non-negative integers $(\kappa, r)$, see \cite[Prop. 2.5]{arc-floer-curves}. 
If we denote by $N_{(\kappa, r)}$ and $\nu_{(\kappa, r)}$ the multiplicity and log discrepancy respectively of the divisor $E_{(\kappa, r)}$, we have 
\[
    N_{(1, 0)} = 1, \ \nu_{(1, 0)} = 1, \qquad
    N_{(0, 1)} = d, \ \nu_{(0, 1)} = n,
\]
see \cite[22.4.R]{vakil}, \cite[Prop. II.8.20]{hartshorne} for the computation of the log discrepancy. 
It follows by induction that for every coprime pair of non-negative integers $(\kappa, r)$ we have
\begin{equation} \label{eq:N-nu-formulas}
    N_{(\kappa, r)} = \kappa + rd, \qquad \nu_{(\kappa, r)} = \kappa + rn.
\end{equation}

\subsection{The topology of the cyclic covers}

Now that we have fixed an $m$-separating resolution, we need to compute the homology groups of the Denef-Loeser coverings $c_{(\kappa, r)}: \widetilde{E}^\circ_{(\kappa, r)} \to E_{(\kappa, r)}^\circ$, which we will finally achieve in Corollary~\ref{cor:homology-computation}.
Before we are able to do so, we need to gain some understanding of the topology of the exceptional divisors.

We start with $E_{(0,1)}$, which recall is the exceptional divisor of the blow-up $\beta: \Bl_0 \mathbb{C}^n \to \mathbb{C}^n$. 
Clearly $E_{(0,1)}$ is isomorphic to $\mathbb{P}^{n-1}$, and the intersection of $E_{(0,1)}$ with the next divisor in the chain of Figure \ref{fig:resolution} is sent to the smooth hypersurface $S$ under this isomorphism. 
Hence $\widetilde{E}_{(0,1)}^\circ$ is a degree $d$ cover of $E^\circ_{(0,1)} \cong \mathbb{P}^{n-1} \setminus S$.

\begin{proposition} \label{E01-topology}
    The space $\widetilde{E}_{(0,1)}^\circ$ is homeomorphic to $M_h \coloneqq \{x \in \mathbb{C}^n \mid h(x) = 1\}$, which is the affine Milnor fiber of $h$, the initial term of $f$, see \cite[Def. 3.1.12]{dimca1992}.
\end{proposition}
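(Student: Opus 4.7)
The plan is to construct an explicit homeomorphism $\Phi : M_h \to \widetilde{E}_{(0,1)}^\circ$ using the standard affine charts of the first blow-up $\beta : \Bl_0 \mathbb{C}^n \to \mathbb{C}^n$, and then to verify that it patches compatibly with the Denef-Loeser gluing described in \eqref{eq:pullback-covering}.

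A preliminary reduction: although $\mu$ is obtained from $\beta$ by iteratively blowing up intersections of the form $E_{(0,1)} \cap E_{(1,k)}$, each such center is (the strict transform of) a smooth divisor isomorphic to $S$ inside $E_{(0,1)} \cong \mathbb{P}^{n-1}$, so blowing up does not change the underlying $\mathbb{P}^{n-1}$ or remove any further points from $E_{(0,1)}^\circ$. Hence $E_{(0,1)}^\circ = \mathbb{P}^{n-1} \setminus S$, and on a neighborhood of $E_{(0,1)}^\circ$ in $X$ the function $f \circ \mu$ has the same local form as $f \circ \beta$ does on the corresponding neighborhood in $\Bl_0 \mathbb{C}^n$. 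Therefore it suffices to describe $\widetilde{E}_{(0,1)}^\circ$ using $\beta$ alone.

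In the standard chart $U_j \subset \Bl_0 \mathbb{C}^n$ with local coordinates $(y_1, \ldots, \widehat{y_j}, \ldots, y_n, t)$ for which $\beta(y, t)$ has $j$-th entry $t$ and $i$-th entry $t y_i$ ($i \neq j$), the decomposition $f = h + F$ with $\ord F > d$ together with the homogeneity of $h$ gives $f \circ \beta = t^d (h(y_1, \ldots, 1, \ldots, y_n) + t \, g_j(y,t))$, so in the language of \eqref{eq:pullback-covering} one has $v_j = t$ and $u_j|_{E_{(0,1)}} = h(y_1, \ldots, 1, \ldots, y_n)$. Consequently $\widetilde{E}_{(0,1)}^\circ \cap U_j$ is $\{(y,w) : w^d = h(y_1, \ldots, 1, \ldots, y_n),\ h(y_1, \ldots, 1, \ldots, y_n) \neq 0\}$. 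I would then propose the map
\[
\Phi_j(x_1, \ldots, x_n) := \bigl(x_1/x_j, \ldots, \widehat{x_j/x_j}, \ldots, x_n/x_j,\ 1/x_j\bigr)
\]
on $M_h \cap \{x_j \neq 0\}$; the homogeneity of $h$ of degree $d$ yields $h(x_1/x_j, \ldots, 1, \ldots, x_n/x_j) = x_j^{-d} h(x) = x_j^{-d}$, which shows $\Phi_j$ lands in the Denef-Loeser cover and admits the continuous inverse $(y,w) \mapsto (y_1/w, \ldots, 1/w, \ldots, y_n/w)$ (again by homogeneity).

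The key remaining step is to match, on each overlap $U_j \cap U_{j'}$, the Denef-Loeser gluing against the transition data for the $\Phi_j$. By \eqref{eq:pullback-covering} the gluing is $w^{(j)} = w^{(j')} \cdot (v_{j'}/v_j)$, and since $v_j = x_j$, $v_{j'} = x_{j'}$, this becomes $w^{(j)} = w^{(j')} \cdot (x_{j'}/x_j)$, which is precisely the tautological identity $1/x_j = (1/x_{j'}) \cdot (x_{j'}/x_j)$ seen by $\Phi_j$ and $\Phi_{j'}$. The maps $\Phi_j$ therefore glue to a global continuous bijection $\Phi : M_h \to \widetilde{E}_{(0,1)}^\circ$ with continuous local inverses, which is the desired homeomorphism. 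The main obstacle in this approach is the bookkeeping translating Denef-Loeser's cocycle into the transition data for the $\Phi_j$; once that is done everything is immediate, and the construction makes transparent the underlying fact that both sides encode the cyclic $d$-fold cover of $\mathbb{P}^{n-1} \setminus S$ determined by a $d$-th root of $h$.
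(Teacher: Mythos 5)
Your proposal is correct and follows essentially the same route as the paper: both identify $M_h$ with the $d$-fold cover of $\mathbb{P}^{n-1}\setminus S$ determined by a $d$-th root of $h$, via the line through the origin. The paper simply asserts that this covering is isomorphic to the Denef--Loeser covering $c_{(0,1)}$, whereas you verify it explicitly in the blow-up charts by matching the cocycle $w^{(j)} = w^{(j')}(v_{j'}/v_j)$ against the tautological relation $1/x_j = (1/x_{j'})(x_{j'}/x_j)$ --- a detail the paper leaves implicit.
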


\begin{proof}
    For any point $(a_1 : \cdots : a_n) \in \mathbb{P}^{n-1}$ consider the ray through the origin in $\mathbb{C}^n$ parametrized by $t \in \mathbb{C} \mapsto (a_1 t, \ldots, a_n t) \in \mathbb{C}^n$. 
    Note that $(a_1 : \cdots : a_n) \in S$ if and only if $h(a_1 t, \ldots, a_n t) = 0$ for all $t \in \CC$.
    On the other hand, for every $(a_1 : \cdots : a_n) \in \mathbb{P}^{n-1} \setminus S$ the equation $t^d h(a_1, \ldots, a_n) = h(a_1 t, \ldots, a_n t) = 1$ has precisely $d$ solutions in $t$.
    This defines a degree $d$ covering $M_h \to \mathbb{P}^{n-1} \setminus S$ that is isomorphic to the covering $c_{(0,1)}: \widetilde{E}_{(0,1)}^\circ \to E^\circ_{(0,1)} \cong \mathbb{P}^{n-1} \setminus S$.
\end{proof}

Now we move on to the rest of the divisors.
For every exceptional divisor $E$ of $\mu$, let $E^{(i)} \subset X^{(i)}$ be the image of $E$ under the composition $\mu^{(i+1)} \circ \cdots \circ \mu^{(\ell)}$, see \eqref{eq:sequence-blow-ups}.
Note that for every $(\kappa, r) \neq (0, 1), (1, 0)$ we have $E_{(\kappa,r)}^{(1)} = E_{(0,1)}^{(1)} \cap E_{(1,0)}^{(1)} \cong S$, and hence we may consider the map
\[
    \pi = \pi_{(\kappa, r)} \coloneqq p \circ \mu^{(2)} \circ \cdots \circ \mu^{(\ell)}|: E_{(\kappa, r)} \to S
\]
where $p$ is the natural projection $\Bl_0 \mathbb{C}^n \to \mathbb{P}^{n-1}$.

\begin{proposition} \label{prop:exceptional-divisor-topology}
    For any $(\kappa, r) \neq (1, 0), (0, 1)$, the map $\pi: E_{(\kappa, r)} \to S$ is a $\mathbb{P}^1$-bundle with two distinguished sections, corresponding to the intersection of $E_{(\kappa, r)}$ with the adjacent divisors. 
    Removing either one of the sections results in a line bundle with Euler class $e(\pi_{(\kappa, r)}|) = \pm(\kappa + rd)h$, where $h \in H^2(S)$ is the restriction of the hyperplane class in $H^2(\mathbb{P}^{n-1})$ (the sign depends on which of the two sections we removed).
\end{proposition}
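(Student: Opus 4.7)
The plan is to proceed in three steps.

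First, I establish the $\PP^1$-bundle structure of $\pi_{(\kappa, r)}$ by tracking the blow-up sequence~\eqref{eq:sequence-blow-ups}. When $E_{(\kappa, r)}$ is first introduced at some stage $X^{(j)}$ as the exceptional divisor of the blow-up of $S^{(j-1)} := E_{(\kappa_-, r_-)} \cap E_{(\kappa_+, r_+)}$ in $X^{(j-1)}$ (where $(\kappa, r) = (\kappa_- + \kappa_+, r_- + r_+)$), the intersection $S^{(j-1)}$ is isomorphic to $S$ via $\pi$. Transversality of the SNC divisor gives the splitting $N_{S^{(j-1)}/X^{(j-1)}} = N_{E_{(\kappa_-, r_-)}/X^{(j-1)}}|_{S^{(j-1)}} \oplus N_{E_{(\kappa_+, r_+)}/X^{(j-1)}}|_{S^{(j-1)}}$, so $E_{(\kappa, r)} = \PP(N_{S^{(j-1)}/X^{(j-1)}})$ is a $\PP^1$-bundle over $S$, and the two sub-line-bundles correspond to the sections arising as intersections of $E_{(\kappa, r)}$ with the strict transforms of the neighbouring divisors. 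Since any subsequent blow-up in the sequence either misses $E_{(\kappa, r)}$ or is taken along one of its sections (a divisor in $E_{(\kappa, r)}$), it induces an isomorphism on $E_{(\kappa, r)}$, so the $\PP^1$-bundle structure and the two sections persist in the final $X$.

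Next, I derive a recursive identity for the normal bundles along the sections. Since the total transform $\sum_F N_F F$ is the divisor of $\mu^* f$, the line bundle $\O_X\!\left(\sum_F N_F F\right)$ is trivial. Restricting this triviality to $E_{(\kappa, r)}$ (using $\O_X(E_{(\kappa, r)})|_{E_{(\kappa, r)}} = N_{E_{(\kappa, r)}/X}$) and then to a section $S_-$ (using that $S_+ \cap S_- = \varnothing$ and the transversality identity $N_{S_-/E_{(\kappa, r)}} = N_{E_{(\kappa_-, r_-)}/X}|_{S_-}$) yields
\[
    N_{(\kappa, r)}\, c_1\!\left(N_{E_{(\kappa, r)}/X}\big|_{S_-}\right) + N_{(\kappa_-, r_-)}\, c_1\!\left(N_{E_{(\kappa_-, r_-)}/X}\big|_{S_-}\right) = 0 \in H^2(S),
\]
with an analogous identity at $S_+$.

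Finally, starting from the base case in $\Bl_0 \CC^n$ ---where $N_{E_{(0,1)}/\Bl_0 \CC^n}|_S = \O_S(-1)$ is standard and $N_{E_{(1,0)}/\Bl_0 \CC^n}|_S = \O_S(d)$ follows from the triviality of $\O_{\Bl_0 \CC^n}(E_{(1,0)} + d\,E_{(0,1)})$--- I will prove by induction on the blow-up sequence that at each stage $X^{(i)}$, for every pair of adjacent divisors $E_L, E_R$ in the current chain (with $E_L$ to the left of $E_R$),
\[
    c_1\!\left(N_{E_L/X^{(i)}}\big|_{E_L \cap E_R}\right) = -N_{E_R}\, h, \qquad c_1\!\left(N_{E_R/X^{(i)}}\big|_{E_L \cap E_R}\right) = N_{E_L}\, h.
\]
The inductive step will combine the strict-transform identity $\O(\tilde{E}_L) = \mu^* \O(E_L) \otimes \O(-E_{\mathrm{new}})$ with the relation just derived and the additivity $N_{E_{\mathrm{new}}} = N_{E_L} + N_{E_R}$. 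To conclude, writing $E_{(\kappa, r)} = \PP(\mathcal{L}_1 \oplus \mathcal{L}_2)$ with $S_i = \PP(\mathcal{L}_i)$, the complement $E_{(\kappa, r)} \setminus S_1$ is the total space of the line bundle $\mathcal{L}_1 \otimes \mathcal{L}_2^{-1}$ over $S$, whose Euler class equals $c_1(\mathcal{L}_1) - c_1(\mathcal{L}_2) = -c_1(N_{S_1/E_{(\kappa, r)}}) = \pm N_{(\kappa, r)}\, h = \pm(\kappa + rd)\, h$ by the inductive claim, with the sign determined by which section is removed. The main technical challenge will be the careful bookkeeping of signs through the induction; they depend on the left-right orientation of the chain and are pinned down consistently by the recursive identity above.
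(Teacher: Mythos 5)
Your proposal is correct, and its overall skeleton matches the paper's: both establish the $\mathbb{P}^1$-bundle structure by identifying $E_{(\kappa,r)}$ with $\mathbb{P}(\mathcal{N}_{Z/X^{(i)}})$ at the moment of its creation, both run an induction along the blow-up sequence \eqref{eq:sequence-blow-ups} with the same base case (tangent cone for $E_{(1,0)}$, $S\subset\mathbb{P}^{n-1}$ for $E_{(0,1)}$), and both finish by identifying the complement of a section of $\mathbb{P}(\mathcal{L}_1\oplus\mathcal{L}_2)$ with the total space of $\mathcal{L}_1\otimes\mathcal{L}_2^{-1}$. Where you genuinely diverge is the mechanism of the inductive step. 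The paper computes the splitting $\mathcal{N}_{Z/X^{(i)}}\cong\mathcal{N}_{Z/E'}\oplus\mathcal{N}_{Z/E''}$, projectivizes, and normalizes by tensoring with a line bundle to read off the normal bundles of the two sections inside the new exceptional divisor. You instead solve a $2\times 2$ linear system for the two unknown Chern classes at each new intersection, using the triviality of $\mathscr{O}(\mathrm{div}(f\circ\mu))$ restricted to the intersection (which encodes $N_{\mathrm{new}}=N_L+N_R$) together with the strict-transform twist $\mathscr{O}(\widetilde{E}_L)\cong\mu^*\mathscr{O}(E_L)\otimes\mathscr{O}(-E_{\mathrm{new}})$; one checks that this system indeed returns $c_1(\mathcal{N}_{E_L/X}|_{E_L\cap E_R})=-N_{E_R}h$ and $c_1(\mathcal{N}_{E_R/X}|_{E_L\cap E_R})=N_{E_L}h$, which via the transversality identity $\mathcal{N}_{E_L\cap E_R/E_R}=\mathcal{N}_{E_L/X}|_{E_L\cap E_R}$ is precisely \eqref{eq:normal-bundle-computation}. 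Your route buys automatic sign bookkeeping and, by stating the inductive hypothesis for \emph{all} adjacent pairs in the current chain, makes explicit the strengthening that the paper leaves implicit (namely that the hypothesis propagates to pairs untouched by the current blow-up, and that $\mathcal{N}_{Z'/E_{(\kappa',r')}}$ is unchanged under strict transform). Two small points to make explicit when writing it up: the principal-divisor relation must be invoked on each intermediate $X^{(i)}$, not only on the final $X$ (harmless, since $f\circ\mu^{(1)}\circ\cdots\circ\mu^{(i)}$ is already regular there), and the disjointness of distinct intersections (no triple points, since the dual complex is a chain) is what guarantees that a blow-up leaves the other adjacent pairs untouched.
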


\begin{proof}

    For this proof we identify each divisor $E_{(\kappa, r)} \subset X$ with its blow-downs $E_{(\kappa, r)}^{(i)}$ to simplify the notation. 
    Suppose that $E_{(\kappa, r)}$ is the exceptional divisor obtained by blowing up the intersection of $E_{(\kappa', r')}$ and $E_{(\kappa'', r'')}$ in some intermediate step $X^{(i)}$ of the resolution. 
    Denote the intersection by $Z \coloneqq E_{(\kappa', r')} \cap E_{(\kappa'', r'')}$ and note that $Z \cong S$ has codimension 2 in $X^{(i)}$. Since $E_{(\kappa', r')}$ and $E_{(\kappa'', r'')}$ intersect transversally, the normal bundle of their intersection is the direct sum
    \[
        \mathcal{N}_{Z/X^{(i)}} \cong \mathcal{N}_{Z/E_{(\kappa', r')}} \oplus \mathcal{N}_{Z/E_{(\kappa'', r'')}}.
    \]
    Without loss of generality, suppose that $E_{(\kappa', r')}$ is closer to $E_{(0,1)}$ than $E_{(\kappa'', r'')}$ (i.e. further left in Figure \ref{fig:resolution}). Then we claim that
    \begin{equation} \label{eq:normal-bundle-computation}
        \mathcal{N}_{Z/E_{(\kappa', r')}} \cong \mathscr{O}_S(\kappa' + r'd) 
        \quad \text{ and } \quad 
        \mathcal{N}_{Z/E_{(\kappa'', r'')}} \cong \mathscr{O}_S(-\kappa'' - r''d).
    \end{equation}

    We prove \eqref{eq:normal-bundle-computation} by induction on the number of blow-ups. The base case is $(\kappa', r') = (0,1)$ and $(\kappa'', r'') = (1,0)$.
    Note that the tangent cone of $f^{-1}(0)$ is given by the vanishing of $h$, so it is the cone over the smooth projective hypersurface $S$. Therefore, after blowing up the origin, the normal bundle of the exceptional divisor inside the strict transform is $\mathcal{N}_{Z/E_{(1,0)}} \cong \mathscr{O}_S(-1)$.
    On the other hand, $E_{(0,1)}$ is isomorphic to $\mathbb{P}^{n-1}$, and under this isomorphism the inclusion $Z \hookrightarrow E_{(0,1)}$ is mapped to the inclusion $S \hookrightarrow \mathbb{P}^{n-1}$.
    Hence its normal bundle is $\mathcal{N}_{Z/E_{(0,1)}} \cong \mathscr{O}_S(d)$.
    This finishes the proof of the base case.

    Now suppose that \eqref{eq:normal-bundle-computation} holds.
    Recall that the exceptional divisor $E_{(\kappa, r)}$ of the blow-up
    \[
        \mu^{(i+1)}: X^{(i+1)} = \mathrm{Bl}_Z X^{(i)} \to X^{(i)}
    \]
    is isomorphic to the projectivization of the normal bundle, i.e $E_{(\kappa, r)} \cong \mathbb{P}(\mathcal{N}_{Z/X^{(i)}})$, see \cite[22.3.D.(a)]{vakil}, \cite[B.6.3]{fulton-intersection}.
    The normal bundle $\mathcal{N}_{Z/X^{(i)}}$ has two canonical rank-one subbundles, namely $\mathcal{N}_{Z/E_{(\kappa', r')}} \oplus 0$ and $0 \oplus \mathcal{N}_{Z/E_{(\kappa'', r'')}}$.
    By definition of projectivization of a vector bundle, these give two sections of $\mathbb{P}(\mathcal{N}_{Z/X^{(i)}}) \to Z$. 
    The images of these sections are sent under the isomorphism $\mathbb{P}(\mathcal{N}_{Z/X^{(i)}}) \cong E_{(\kappa, r)}$ to the intersections $E_{(\kappa, r)} \cap E_{(\kappa', r')}$ and $E_{(\kappa, r)} \cap E_{(\kappa'', r'')}$ respectively.

    Recall that tensoring by a line bundle does not affect the projectivization \cite[Lemma II.7.9]{hartshorne}, so tensoring with $\mathscr{O}_S(-\kappa' - r'd)$ and recalling that $(\kappa,r) = (\kappa', r') + (\kappa'', r'')$, we get
    \[
        \mathbb{P}(\mathscr{O}_S(\kappa' + r'd) \oplus \mathscr{O}_S(-\kappa'' - r''d)) \cong \mathbb{P}(\mathscr{O}_S \oplus \mathscr{O}_S(-\kappa - rd)).
    \]
    The subspace $0 \oplus \mathscr{O}_S(-\kappa'' - r''d)$ is sent by the tensor product to $0 \oplus \mathscr{O}_S(-\kappa - rd)$, which is the hyperplane at infinity of the projective completion $\mathbb{P}(\mathscr{O}_S \oplus \mathscr{O}_S(-\kappa - rd))$ of $\mathscr{O}_S(-\kappa - rd)$, see \cite[B.5.2]{fulton-intersection}. Removing this hyperplane at infinity we are left with the line bundle $\mathscr{O}_S(-\kappa - rd)$, whose zero section now corresponds to the intersection of $E_{(\kappa, r)}$ with $E_{(\kappa', r')}$. In conclusion, the normal bundle of the intersection $E_{(\kappa', r')} \cap E_{(\kappa, r)}$ in $E_{(\kappa, r)}$ is
    \[
        \mathcal{N}_{E_{(\kappa', r')} \cap E_{(\kappa, r)} / E_{(\kappa, r)}} = \mathscr{O}_S(-\kappa - rd).
    \]
    Analogously, we find
    \[
        \mathcal{N}_{E_{(\kappa'', r'')} \cap E_{(\kappa, r)} / E_{(\kappa, r)}} = \mathscr{O}_S(\kappa + rd).
    \]
    This concludes the induction, proving \eqref{eq:normal-bundle-computation} and finishing the proof of (ii). 
\end{proof}

Now that we have understood the topology of the divisors $E_{(\kappa, r)}$, we move on to the study of the Denef-Loeser coverings $c_{(\kappa, r)}: \widetilde{E}_{(\kappa, r)}^\circ \to E_{(\kappa, r)}^\circ$. 
The proof of the following lemma is standard, but we include it for completeness.

\begin{lemma} \label{Etilde-path-connected}
    For $(\kappa, r) \neq (0,1), (1,0)$, the space $\widetilde{E}_{(\kappa, r)}^\circ$ is path connected.
\end{lemma}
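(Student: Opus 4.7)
Since $c_{(\kappa,r)} \colon \widetilde E^\circ_{(\kappa,r)} \to E^\circ_{(\kappa,r)}$ is a finite Galois cover with deck group $\mathbb{Z}/N_{(\kappa,r)}\mathbb{Z}$ over a locally path connected base, the plan is to show that the associated monodromy homomorphism $\rho \colon \pi_1(E^\circ_{(\kappa,r)}) \to \mathbb{Z}/N_{(\kappa,r)}\mathbb{Z}$ is surjective, which suffices. First I would record that $E^\circ_{(\kappa,r)}$ is itself path connected: by Proposition~\ref{prop:exceptional-divisor-topology}, $E_{(\kappa,r)} \to S$ is a $\mathbb{P}^1$-bundle over the smooth connected hypersurface $S$ (connected since $n \geq 3$), and $E^\circ_{(\kappa,r)}$ is the complement in $E_{(\kappa,r)}$ of the two disjoint distinguished sections, hence a $\mathbb{C}^\times$-bundle over $S$.

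To produce a generator of the monodromy, let $E' = E_{(\kappa', r')}$ and $E'' = E_{(\kappa'', r'')}$ be the two divisors adjacent to $E = E_{(\kappa,r)}$ in the chain of Figure~\ref{fig:resolution}, so that $(\kappa, r) = (\kappa', r') + (\kappa'', r'')$ and hence $N_E = N_{E'} + N_{E''}$ by \eqref{eq:N-nu-formulas}. Near a generic point of the section $E \cap E'$, I would choose local coordinates $(v, v', x_3, \ldots, x_n)$ such that $v = 0$ cuts out $E$, $v' = 0$ cuts out $E'$, and $f \circ \mu = w\, v^{N_E} (v')^{N_{E'}}$ for a local unit $w$. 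Then the function $u$ in \eqref{eq:pullback-covering} is $u = w(v')^{N_{E'}}$, and the loop $\gamma'$ in $E^\circ_{(\kappa,r)}$ defined in these coordinates by $v' = \varepsilon e^{2\pi i t}$ (the remaining coordinates fixed, $\varepsilon$ small) satisfies $\rho(\gamma') \equiv N_{E'} \pmod{N_E}$: the winding of $u \circ \gamma'$ around $0 \in \mathbb{C}^\times$ is $N_{E'}$, since the factor $w$ becomes null-homotopic in $\mathbb{C}^\times$ as $\varepsilon \to 0$. The gluing isomorphisms in \eqref{eq:pullback-covering} show that this class mod $N_E$ is independent of the local chart, because the transition units $u/u' = (v'/v)^{N_E}$ are $N_E$-th powers.

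It then remains to verify that $N_{E'}$ is a generator of $\mathbb{Z}/N_E\mathbb{Z}$. Along the iterative construction \eqref{eq:sequence-blow-ups}, a short induction (cf.\ \cite[Prop.~2.5]{arc-floer-curves}) shows that adjacent labels are \emph{Farey neighbors}, i.e.\ $|\kappa' r'' - r' \kappa''| = 1$: the base case is $\{(0,1), (1,0)\}$, and each blow-up preserves the property. Combined with \eqref{eq:N-nu-formulas}, this yields
\[
    r'' N_{E'} - r' N_{E''} = (\kappa' + r'd)r'' - r'(\kappa'' + r''d) = \kappa' r'' - r' \kappa'' = \pm 1,
\]
so $\gcd(N_{E'}, N_{E''}) = 1$ and therefore $\gcd(N_{E'}, N_E) = \gcd(N_{E'}, N_{E'}+N_{E''}) = 1$, as desired. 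The main technical obstacle is the monodromy computation itself: checking that the local winding-number prescription for $\rho$ assembles into a well-defined global homomorphism, and that the winding of $u \circ \gamma'$ is indeed $N_{E'}$ rather than being polluted by the unit factor $w$.
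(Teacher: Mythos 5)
Your proposal is correct and is essentially the paper's own argument repackaged: the paper also lifts a loop winding once around an adjacent divisor $E'$, observes that the resulting deck transformation is $e^{2\pi i N'/N}$ (i.e.\ monodromy $N'\bmod N$), and concludes from $\gcd(N,N')=1$ (proved by the same induction along the blow-up sequence that you phrase via Farey neighbors) that the lift visits all $N$ sheets. Your reformulation in terms of surjectivity of the monodromy homomorphism, together with the explicit check that the base is a connected $\mathbb{C}^\times$-bundle over $S$, is just a cleaner packaging of the same computation.
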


\begin{proof}
    Write $E = E_{(\kappa, r)}$ and let $E' \in \mathcal{E}$ be such that $E \cap E' \neq \varnothing$. 
    Then either $E$ has appeared during the resolution as the exceptional divisor of the blow-up of the intersection of $E'$ with another divisor, or vice versa.
    Denoting by $N, N'$ the multiplicities of $E, E'$ respectively, a straightforward induction shows that $\gcd(N, N') = \gcd(N_{(1,0)}, N_{(0,1)}) = \gcd(1,d) = 1$.

    Consider an open affine $U \subset X$ around a point in $E \cap E'$ such that $f \circ \mu = uv^{N}(v')^{N'}$, with $u \in \mathscr{O}^\times(U)$ and $v, v' \in \mathscr{O}(U)$, which exists because the total transform $(f \circ \mu)^{-1}(0)$ has normal crossings. 
    Note that $U \setminus E'$ is an affine open containing $E^\circ \cap U$. Furthermore, $v' \in \mathscr{O}^\times(U \setminus E')$ and hence $u(v')^{N'} \in \mathscr{O}^\times(U \setminus E')$. 
    By construction of the covering $c_E$ we have a description of $c_E^{-1}(E^\circ \cap U)$ as a pullback \eqref{eq:pullback-covering}, namely
    \[
        c_E^{-1}(E^\circ \cap U) \cong \{ (z, x) \in \mathbb{C}^\times \times (E^\circ \cap U) \mid u(x) v'(x)^{N'} = z^N \} .
    \]
    Consider a loop $x(t)$ in $E^\circ \cap U$ such that $v'(x(t)) = e^{2 \pi \imath t}$.
    Note that $u(x(t))$ is a loop in $\mathbb{C}^\times$ which does \emph{not} wind around the origin (because $u$ does not vanish along $E'$).
    Therefore we may lift $u(x(t))$ through the covering $\mathbb{C}^\times \to \mathbb{C}^\times: z \mapsto z^N$, obtaining a \emph{loop} $\hat{u}(t)$ in $\mathbb{C}^\times$ such that $\hat{u}(0) = \hat{u}(1)$ and $\hat{u}(t)^N = u(x(t))$.
    Define
    \[
        z(t) \coloneqq \hat{u}(t) e^{2 \pi \imath t N'/N}.
    \]
    Then,
    \[
        z(t)^N = \hat{u}(t)^N e^{2 \pi \imath t N'} = u(x(t)) v'(x(t))^{N'},
    \]
    which shows that $(z(t), x(t))$ is a loop in $c_E^{-1}(E^\circ \cap U)$ lifting $x(t)$. 
    Furthermore, as $t$ varies through the integers, $z(t)$ runs through all $N$-th roots of $u(x(0))$ because $N$ and $N'$ are coprime. 
    In other words, the path $(z(t), x(t))$ passes through all of the $N$ preimages of $u(x(0))$ under the covering $c_E$, which shows that $c_E^{-1}(E^\circ \cap U)$ is path connected. 
    Since any point in $\widetilde{E}^\circ$ may be connected to a point in $c_E^{-1}(E^\circ \cap U)$ by a path, we conclude that $\widetilde{E}^\circ$ is path connected.
\end{proof}

\begin{proposition} \label{prop:euler-class-covering}
    Let $(\kappa, r) \neq (0, 1), (1, 0)$.
    The composition $\widetilde{\pi}_{(\kappa, r)} \coloneqq \pi_{(\kappa, r)} \circ c_{(\kappa, r)}: \widetilde{E}_{(\kappa, r)}^\circ \to S$ is a fiber bundle with fiber $\mathbb{C}^\times$, and its Euler class is $e(\pi \circ p) = \pm h$, depending on the orientation.
\end{proposition}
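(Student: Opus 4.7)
By Proposition~\ref{prop:exceptional-divisor-topology}, the map $\pi_{(\kappa,r)}: E_{(\kappa,r)}^\circ \to S$ is the $\mathbb{C}^\times$-bundle $L^\times \to S$ associated to a line bundle $L$ on $S$ with $c_1(L) = \pm N_{(\kappa,r)} h$, where $N_{(\kappa,r)} = \kappa + rd$. My strategy is to show that $\widetilde{\pi}_{(\kappa,r)}$ is itself the $\mathbb{C}^\times$-bundle associated to a line bundle $\widetilde{L}$ with $\widetilde{L}^{\otimes N_{(\kappa,r)}} \cong L$, from which the Euler class computation will be immediate: taking first Chern classes gives $N_{(\kappa,r)} c_1(\widetilde{L}) = \pm N_{(\kappa,r)} h$, and since $H^2(S;\mathbb{Z})$ is torsion-free (by Lefschetz hyperplane for $n \geq 5$, and by direct inspection in dimensions $n = 3, 4$), we conclude $c_1(\widetilde{L}) = \pm h$.

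The key step will be understanding the restriction of the Denef-Loeser cover $c_{(\kappa,r)}$ to a single fiber $F_s \coloneqq \pi_{(\kappa,r)}^{-1}(s) \cap E_{(\kappa,r)}^\circ \cong \mathbb{C}^\times$ for $s \in S$. A generator of $\pi_1(F_s) \cong \mathbb{Z}$ is a loop winding once around $F_s \cap (E_{(\kappa,r)} \cap E')$ for one of the two divisors $E'$ adjacent to $E_{(\kappa,r)}$. The monodromy computation at the heart of the proof of Lemma~\ref{Etilde-path-connected} shows that the monodromy of $c_{(\kappa,r)}$ along such a loop has order $N_{(\kappa,r)}$, using $\gcd(N_{(\kappa,r)}, N_{E'}) = 1$. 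Hence the fiberwise restriction of the cover is a connected degree-$N_{(\kappa,r)}$ covering of $\mathbb{C}^\times$, necessarily isomorphic to $z \mapsto z^{N_{(\kappa,r)}}$. This already shows $\widetilde{\pi}_{(\kappa,r)}$ is a fiber bundle with fiber $\mathbb{C}^\times$.

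To promote this fiber bundle to a principal $\mathbb{C}^\times$-bundle, I will lift the fiberwise scaling action on $L^\times$. In local trivializations where the Denef-Loeser cover reads $z^{N_{(\kappa,r)}} = u$, as in \eqref{eq:pullback-covering}, the scaling $u \mapsto \lambda^{N_{(\kappa,r)}} u$ lifts canonically to $z \mapsto \lambda z$, and the gluing data of the Denef-Loeser construction are compatible with these local lifts. Globally, this equips $\widetilde{E}_{(\kappa,r)}^\circ \to S$ with a principal $\mathbb{C}^\times$-bundle structure, identifying it with $\widetilde{L}^\times$ for a line bundle $\widetilde{L}$ whose transition cocycle is an $N_{(\kappa,r)}$-th root of that of $L$; in particular $\widetilde{L}^{\otimes N_{(\kappa,r)}} \cong L$, closing the argument.

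The main technical obstacle will be rigorously verifying that the local $\mathbb{C}^\times$-actions assemble into a global action and that the resulting line bundle structure on $\widetilde{E}_{(\kappa,r)}^\circ$ truly makes $c_{(\kappa,r)}$ into the $N_{(\kappa,r)}$-th power map of line bundles. A lower-tech alternative that avoids constructing $\widetilde{L}$ explicitly would be to compare the Gysin sequences of $\pi_{(\kappa,r)}$ and $\widetilde{\pi}_{(\kappa,r)}$: a degree-$N_{(\kappa,r)}$ covering on fibers multiplies the Euler class by $1/N_{(\kappa,r)}$ (whenever $H^2(S;\mathbb{Z})$ is torsion-free), which directly forces $e(\widetilde{\pi}_{(\kappa,r)}) = \pm h$.
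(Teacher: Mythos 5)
Your overall strategy is sound and reaches the correct conclusion, but the two halves of your write-up have different statuses, and it is worth separating them. The fiberwise analysis in your second paragraph is a nice, more hands-on substitute for what the paper does: the paper establishes connectivity of the fiber $\widetilde F$ by combining Lemma~\ref{Etilde-path-connected} with the homotopy long exact sequence and $\pi_1(S)=0$, whereas you extract the order-$N_{(\kappa,r)}$ fiberwise monodromy directly from the computation inside the proof of Lemma~\ref{Etilde-path-connected}; both are fine (for local triviality of $\widetilde\pi_{(\kappa,r)}$ you should still say a word about why connected coverings of $V\times\CC^\times$ with $V$ contractible are pulled back from $\CC^\times$, which is the content of the reference the paper cites).

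The gap is in your primary route to the Euler class. The assertion that ``in local trivializations where the Denef--Loeser cover reads $z^{N}=u$, the scaling $u\mapsto\lambda^{N}u$ lifts canonically to $z\mapsto\lambda z$'' is not justified and is not obviously true: the unit $u$ in $f\circ\mu=uv^{N}$ is unrelated to the fiber coordinate of the $\CC^\times$-bundle $E^\circ_{(\kappa,r)}\to S$, so there is no reason that fiberwise scaling by $\lambda$ multiplies $u$ by $\lambda^{N}$ (on $U\setminus E'$ the relevant equation is $z^{N}=u(v')^{N'}$, and under scaling of the fiber coordinate the right-hand side transforms by $\lambda^{N'}$ times a correction from $u$, not by $\lambda^{N}$). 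Without this, the existence of a global principal $\CC^\times$-structure identifying $\widetilde E^\circ_{(\kappa,r)}$ with $\widetilde L^\times$ for an $N_{(\kappa,r)}$-th root $\widetilde L$ of $L$ is unproven. Fortunately your ``lower-tech alternative'' is a complete proof and should be promoted to the main argument: naturality of the Gysin sequence (equivalently, of the transgression $d_2\colon H^1(\mathrm{fiber})\to H^2(S)$) for the map of $\CC^\times$-fibrations $\widetilde E^\circ_{(\kappa,r)}\to E^\circ_{(\kappa,r)}$ over $\mathrm{id}_S$ gives $N_{(\kappa,r)}\,e(\widetilde\pi_{(\kappa,r)})=\pm N_{(\kappa,r)}h$, and $H^2(S;\ZZ)$ is torsion-free for all $n\geq 3$, so $e(\widetilde\pi_{(\kappa,r)})=\pm h$. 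This is precisely the cohomological mirror of the paper's argument, which runs the same comparison through the homotopy long exact sequence ($\pi_2(S)\to\pi_1(F)$) and obstruction theory; if anything, your cohomological version is preferable, since it does not rely on any claim about $\pi_2(S)$.
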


\begin{proof}
    Recall from Proposition~\ref{prop:exceptional-divisor-topology} that $\pi_{(\kappa, r)}$ is a $\mathbb{P}^1$-bundle, and removing the two sections corresponding to the intersections of $E_{(\kappa, r)}$ with the adjacent divisors we obtain a $\CC^\times$-bundle.
    The composition $\widetilde{\pi}_{(\kappa,r)}$ of the covering map $c_{(\kappa, r)}$ followed by the $\CC^\times$-bundle $\pi_{(\kappa,r)}$ is a fiber bundle, see \cite{MO-fiberbundlecomposition}. 
    Let $F = \mathbb{C}^\times$ be the fiber of $\pi_{(\kappa, r)}$ and let $\widetilde{F}$ be the fiber of $\widetilde{\pi}_{(\kappa, r)}$.
    The long exact sequence in homotopy associated to the fibration $\widetilde{\pi}_{(\kappa, r)}$ reads
    \[
        \cdots \to \pi_1(S) \to \pi_0(\widetilde{F}) \to \pi_0(\widetilde{E}_{(\kappa, r)}^\circ) \to \cdots
    \]
    Recall that $\pi_1(S) = 0$ by the Lefschetz hyperplane theorem, and $\pi_0(\widetilde{E}_{(\kappa, r)}^\circ) = 0$ by Lemma~\ref{Etilde-path-connected}.
    Hence $\pi_0(\widetilde{F}) = 0$, so $\widetilde{F}$ is path connected.
    Since $\widetilde{F} = c_{(\kappa, r)}^{-1}(F)$, the covering $c_{(\kappa, r)}$ restricts to an $N_{(\kappa, r)}$-covering $\widetilde{F} \to F$, and since $F \cong \mathbb{C}^\times$ and $\widetilde{F}$ is path connected, we conclude $\widetilde{F} \cong \mathbb{C}^\times$.

    Now consider the map of fibrations
    \[\begin{tikzcd}
        {\widetilde{F}} & {\widetilde{E}^\circ_{(\kappa,r)}} & S \\
        F & {E^\circ_{(\kappa,r)}} & S
        \arrow[from=1-1, to=1-2]
        \arrow["{c_{(\kappa, r)}|_{\widetilde{F}}}"', from=1-1, to=2-1]
        \arrow[from=1-2, to=1-3]
        \arrow["{c_{(\kappa, r)}}", from=1-2, to=2-2]
        \arrow["\id", from=1-3, to=2-3]
        \arrow[from=2-1, to=2-2]
        \arrow[from=2-2, to=2-3]
    \end{tikzcd}\]
    By functoriality of the long exact sequence in homotopy, we have a commutative diagram
    \[\begin{tikzcd}
        {\pi_2(S) \cong \mathbb{Z}} & {\pi_1(\widetilde{F}) \cong \mathbb{Z}} \\
        {\pi_2(S) \cong \mathbb{Z}} & {\pi_1(F) \cong \mathbb{Z}}
        \arrow["{\widetilde{\partial}}", from=1-1, to=1-2]
        \arrow["\id"', from=1-1, to=2-1]
        \arrow["{\times (\pm N_{(\kappa, r)})}", from=1-2, to=2-2]
        \arrow["\partial", from=2-1, to=2-2]
    \end{tikzcd}\]
    By obstruction theory, the connecting morphisms $\partial$ and $\widetilde{\partial}$ are given by multiplication by the Euler classes of the fiber bundles $\pi_{(\kappa, r)}$ and $\widetilde{\pi}_{(\kappa, r)}$ respectively.
    Indeed, the map $\partial$ is defined as follows: consider a class $[\alpha] \in \pi_2(S)$ represented by a map $\alpha:(\DD^2, \partial \DD^2) \to (S, s_0)$ (where $s_0$ is the basepoint of $S$).
    Interpreting $(\DD^2, \partial \DD^2) \cong ((\SS^1 \times [0,1])/(\SS^1 \times \{0\}), \SS^1 \times \{1\})$, we may use the homotopy lifting property of the fibration $\pi_{(\kappa, r)}$ to lift $\alpha$ to a map $\widetilde{\alpha}: \DD^2 \to E^\circ_{(\kappa, r)}$ such that $\alpha(\partial \DD^2) \subset \pi_{(\kappa, r)}^{-1}(s_0) = F$. 
    Then $\partial([\alpha])$ is by definition the class of the restricted map $\widetilde{\alpha}|: \partial \DD^2 \cong \SS^1 \to F$.
    This class $[\widetilde{\alpha}] \in \pi_1(F)$ is precisely the obstruction to lift $\alpha$ to a map $\SS^2 \to E^\circ_{(\kappa, r)}$, and this obstruction is known to equal the Euler class.
    For the details, see \cite[\S 12]{milnor1974}.
    
    We know that the Euler class of $\pi_{(\kappa, r)}$ is $\pm(\kappa + rd)h$ by Proposition \ref{prop:exceptional-divisor-topology}, so the bottom arrow is multiplication by $\pm(\kappa + rd)$.
    On the other hand, $N_{(\kappa, r)} = \kappa + rd$ by \eqref{eq:N-nu-formulas}, so the right arrow is also multiplication by $\pm(\kappa + rd)$.
    Therefore the commutativity of the diagram gives that the top arrow $\widetilde{\partial}$ is multiplication by $\pm 1$, which implies that the Euler class of $\widetilde{\pi}_{(\kappa, r)}$ is $\pm h$, as we wanted.
\end{proof}

Now that we have described the topology of the Denef-Loeser coverings, we can compute their homology groups.
For the next results, we need to recall the structure of the cohomology ring of the smooth degree $d$ hypersurface $S$, see \cite{qiaochuyuan}. We have
\[
    H^k(S) \cong \begin{cases}
        \mathbb{Z} \langle h^{k/2} \rangle & \text{if } k < n-2 \text{ and $k$ is even}, \\
        \mathbb{Z}^b & \text{if } k = n - 2, \\
        \mathbb{Z} \langle \tfrac{1}{d} h^{k/2} \rangle & \text{if } k > n-2 \text{ and $k$ is even}, \\
        0 & \text{otherwise,}
    \end{cases}
\]
where $h \in H^2(S)$ is the hyperplane class coming from the embedding in $\mathbb{P}^{n-1}$, and $\tfrac{1}{d} h^{k/2}$ is just notation for an integral class $\beta \in H^k(S)$ such that $d \beta = h^{k/2}$. The rank of the middle cohomology group is given by 
\begin{equation} \label{eq:middle-cohomology-rank}
    b = (-1)^{n-1} 2 \left\lceil \frac{n-1}{2} \right\rceil + \sum_{k = 0}^{n-2} (-1)^k \binom{n}{k} d^{n-1-k}.
\end{equation}
From here it is immediate that the Lefschetz map $H^k(S) \xrightarrow{\smile h} H^{k+2}(S)$ is an isomorphism for $k \notin \{ n-4, n-3, n-2 \}$. 
Furthermore, if $n$ is odd then the maps $H^{n-4}(S) = 0 \xrightarrow{\smile h} H^{n-2}(S)$ and $H^{n-2}(S) \xrightarrow{\smile h} H^{n}(S) = 0$ are zero, and the map $H^{n-3}(S) \cong \mathbb{Z} \xrightarrow{\smile h} H^{n-1}(S) \cong \mathbb{Z}$ is multiplication by $d$.
On the other hand, if $n$ is even then $H^{n-3}(S) = 0 \xrightarrow{\smile h} H^{n-1}(S) = 0$ is zero, the map $H^{n-4}(S) \cong \mathbb{Z} \xrightarrow{\smile h} H^{n-2}(S)$ is injective (because the composition $H^{n-4}(S) \xrightarrow{\smile h^2} H^{n}(S)$ is multiplication by $d$ and hence injective), and $H^{n-2}(S) \xrightarrow{\smile h} H^{n}(S) \cong \mathbb{Z}$ is surjective by the following lemma.
    

\begin{lemma} \label{lem:lefschetz-surjective}
    The Lefschetz map $H^{n-2}(S) \xrightarrow{\smile h} H^n(S)$ is surjective.
\end{lemma}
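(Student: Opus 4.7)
The plan is to split on the parity of $n$. When $n$ is odd, the statement is vacuous: the cohomology ring description recalled just above shows $H^n(S) = 0$, since $n > n-2$ and $n$ is odd. From now on I assume $n$ is even, which forces $n \geq 4$ (because the paper fixes $n \geq 3$), and $H^n(S) \cong \mathbb{Z}$ is generated by $\tfrac{1}{d}h^{n/2}$.

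The approach is to use the Gysin pushforward associated to the inclusion $i: S \hookrightarrow \mathbb{P}^{n-1}$ in order to reduce surjectivity of $\smile h$ on $S$ to the (trivial) surjectivity of $\smile H$ on $\mathbb{P}^{n-1}$, where $H \in H^2(\mathbb{P}^{n-1})$ is the ambient hyperplane class. Let $i_!: H^k(S) \to H^{k+2}(\mathbb{P}^{n-1})$ be defined via Poincaré duality on $S$ and $\mathbb{P}^{n-1}$, both compact oriented manifolds. Two properties are crucial. First, the Lefschetz hyperplane theorem applied to the ample divisor $S \subset \mathbb{P}^{n-1}$, combined with Poincaré duality, gives that $i_!: H^k(S) \to H^{k+2}(\mathbb{P}^{n-1})$ is an isomorphism for $k > n-2$ and a surjection for $k = n-2$; in particular $i_!$ is surjective on $H^{n-2}(S)$ and an isomorphism on $H^n(S)$. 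Second, since $h = i^*H$, the projection formula $i_!(i^*\alpha \smile \beta) = \alpha \smile i_!(\beta)$ produces the commutative square
\[
\begin{tikzcd}
H^{n-2}(S) \ar[r, "\smile h"] \ar[d, "i_!"'] & H^n(S) \ar[d, "i_!"] \\
H^n(\mathbb{P}^{n-1}) \ar[r, "\smile H"] & H^{n+2}(\mathbb{P}^{n-1}).
\end{tikzcd}
\]

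The bottom horizontal arrow is an isomorphism of copies of $\mathbb{Z}$, because $n/2 + 1 \leq n-1$ for $n \geq 4$ and $\mathbb{P}^{n-1}$ has the usual cohomology ring of projective space. Since the left vertical is surjective, the composition down-then-right is surjective; since the right vertical is an isomorphism, the top horizontal $\smile h$ must also be surjective, which is the claim. I do not foresee any serious obstacle: the whole argument is a formal consequence of Lefschetz hyperplane, Poincaré duality and the projection formula, all of which hold at the integral level in this setting.
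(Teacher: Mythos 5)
Your proof is correct, but it takes a genuinely different route from the paper. The paper specializes to the Fermat hypersurface (using that all smooth degree-$d$ hypersurfaces are diffeomorphic compatibly with $h$ and the cup product), exhibits a linear subspace $L \subset S$ of dimension $\tfrac{n}{2}-1$, and shows directly that the Poincar\'e dual $\alpha$ of $[L]$ satisfies that $\alpha \smile h$ generates $H^n(S)$; this has the side benefit of producing an explicit class $\alpha$ that the paper reuses afterwards (in the proof of Corollary~\ref{cor:homology-computation}, to show that $h^{(n-2)/2}$ is primitive in $H^{n-2}(S)$). Your argument is instead purely formal: the Gysin map $i_!$ is defined via Poincar\'e duality on the two canonically oriented compact complex manifolds, the integral Lefschetz hyperplane theorem (in its homology form, $i_*\colon H_k(S)\to H_k(\mathbb{P}^{n-1})$ iso for $k<n-2$ and onto for $k=n-2$) gives exactly the stated surjectivity of $i_!$ in degree $n-2$ and bijectivity in degree $n$, the projection formula $i_!(i^*\alpha\smile\beta)=\alpha\smile i_!\beta$ gives the commuting square, and $\smile H\colon H^n(\mathbb{P}^{n-1})\to H^{n+2}(\mathbb{P}^{n-1})$ is indeed an isomorphism since $n/2+1\le n-1$ for even $n\ge 4$. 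All steps check out with $\mathbb{Z}$ coefficients, and your version has the advantage of working for an arbitrary smooth hypersurface without the reduction to the Fermat case; what it gives up is the explicit generator that the paper's geometric construction provides.
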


\begin{proof}
    The following argument is due to \cite{MO-Lefschetz-map}. If $n$ is odd then $H^n(S) = 0$ and there is nothing to prove, so let us assume that $n$ is even. Since any two smooth hypersurfaces of degree $d$ are diffeomorphic, and the induced isomorphism in cohomology respects the cup product and the hyperplane class, we may assume that $S$ is the Fermat hypersurface in $\mathbb{P}^{n-1}$, i.e. the zero locus of $\sum_{i=1}^{n/2} (x_i^d - y_i^d)$. Then $S$ contains a linear subspace $L$ of dimension $\tfrac{n}{2} - 1$, namely the one defined by the ideal $(x_1 - y_1, \ldots, x_{n/2} - y_{n/2})$.

    Let $\alpha \in H^{n-2}(S)$ be the Poincaré dual of $[L] \in H_{n-2}(S)$. Then, $\alpha \smile h^{(n-2)/2}$ is the Poincaré dual of the intersection of $L$ with $(n-2)/2$ generic hyperplanes, which is a point, and therefore it generates $H^{2n-4}(S)$. Since $S$ has degree $d$, we have $h^{n-2} = d(\alpha \smile h^{(n/2) - 1})$. This implies that $\alpha \cup h$ is a generator of $H^n(S)$, which finishes the proof.
\end{proof}

\begin{corollary} \label{cor:homology-computation}
    The homology groups appearing in the McLean spectral sequence \eqref{eq:mclss} are
    \[
        H_k(\widetilde{E}_{(0, 1)}^\circ) \cong \begin{cases}
            \mathbb{Z} & \text{if } k = 0, \\
            \mathbb{Z}^{\mu(h)} & \text{if } k = n - 1, \\
            0 & \text{otherwise,}
        \end{cases}
    \]
    and for $(\kappa, r) \neq (1, 0), (0, 1)$ we have
    \begin{align*}
        \text{if } &n \text{ is odd:} & \text{if } &n \text{ is even:} \\
        H_k(\widetilde{E}_{(\kappa, r)}^\circ) &\cong \begin{cases}
            \mathbb{Z} & \text{if } k = 0, \\
            \mathbb{Z}^b \oplus \mathbb{Z}/(d) & \text{if } k = n-2, \\
            \mathbb{Z}^b & \text{if } k = n-1, \\
            \mathbb{Z} & \text{if } k = 2n-3, \\
            0 & \text{otherwise,}
        \end{cases}
        &
        H_k(\widetilde{E}_{(\kappa, r)}^\circ) &\cong \begin{cases}
            \mathbb{Z} & \text{if } k = 0, \\
            \mathbb{Z}^{b-1} & \text{if } k = n-2, \\
            \mathbb{Z}^{b-1} & \text{if } k = n-1, \\
            \mathbb{Z} & \text{if } k = 2n-3, \\
            0 & \text{otherwise,}
        \end{cases}
    \end{align*}
    where $\mu(h)$ is the Milnor number of $h$, and $b$ is the rank of the middle cohomology group of $S$, see \eqref{eq:middle-cohomology-rank}.
\end{corollary}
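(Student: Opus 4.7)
The plan is to treat the two possibilities for $(\kappa, r)$ separately. For $(\kappa, r) = (0, 1)$, Proposition~\ref{E01-topology} identifies $\widetilde{E}_{(0,1)}^\circ$ with the affine Milnor fiber $M_h$ of the homogeneous polynomial $h$. Since $h$ has an isolated critical point at the origin, Milnor's classical theorem on the homotopy type of the Milnor fiber of a homogeneous polynomial gives that $M_h \simeq \bigvee_{\mu(h)} S^{n-1}$, from which the claimed homology is immediate.

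For $(\kappa, r) \neq (0, 1), (1, 0)$, the space $\widetilde{E}_{(\kappa, r)}^\circ$ is the total space of an oriented $\mathbb{C}^\times$-bundle over $S$ with Euler class $\pm h$ (Proposition~\ref{prop:euler-class-covering}). I would apply the Gysin sequence in homology to this bundle, which yields split short exact sequences
\[
0 \to \coker\bigl(\cap h\colon H_{k+1}(S) \to H_{k-1}(S)\bigr) \to H_k(\widetilde{E}_{(\kappa, r)}^\circ) \to \ker\bigl(\cap h\colon H_k(S) \to H_{k-2}(S)\bigr) \to 0
\]
(split because every kernel, being a subgroup of the free abelian group $H_k(S)$, is itself free). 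Poincar\'e duality on $S$ identifies $\cap h$ on homology with $\smile h$ on cohomology, so the kernel/cokernel pattern is read off from the Lefschetz analysis the paper has just carried out. Outside a narrow window near the middle dimension of $S$, this map is an isomorphism (or a map between trivial groups), so $H_k(\widetilde{E}_{(\kappa, r)}^\circ) = 0$ for $k \notin \{0, n-2, n-1, 2n-3\}$; the extreme degrees $k = 0$ and $k = 2n-3$ contribute the two copies of $\mathbb{Z}$ in the statement.

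The middle degrees split on the parity of $n$. For $n$ odd, the key feature is that $\cap h\colon H_{n-1}(S) \cong \mathbb{Z} \to H_{n-3}(S) \cong \mathbb{Z}$ is multiplication by $d$, producing the $\mathbb{Z}/(d)$ torsion summand in $H_{n-2}(\widetilde{E}_{(\kappa, r)}^\circ)$; the remaining free $\mathbb{Z}^b$ summands in degrees $n-2$ and $n-1$ come from the kernel of the zero map $\cap h\colon H_{n-2}(S) \to 0$ and the cokernel of the zero map $\cap h\colon 0 \to H_{n-2}(S)$. For $n$ even, by Lemma~\ref{lem:lefschetz-surjective} the map $\cap h\colon H_{n-2}(S) \cong \mathbb{Z}^b \twoheadrightarrow H_{n-4}(S) \cong \mathbb{Z}$ is surjective with free kernel of rank $b-1$, giving $H_{n-2}(\widetilde{E}_{(\kappa, r)}^\circ) = \mathbb{Z}^{b-1}$. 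I expect the main obstacle to be showing $H_{n-1}(\widetilde{E}_{(\kappa, r)}^\circ) = \mathbb{Z}^{b-1}$ is also torsion-free: it equals the cokernel of the injective map $\cap h\colon H_n(S) \cong \mathbb{Z} \hookrightarrow H_{n-2}(S) \cong \mathbb{Z}^b$, whose image is generated by $h^{(n-2)/2} \cap [S]$, and to rule out torsion we need $h^{(n-2)/2}$ to be primitive in $H^{n-2}(S)$. I would establish this primitivity by revisiting the argument of Lemma~\ref{lem:lefschetz-surjective}: after its reduction to the Fermat hypersurface via diffeomorphism invariance, the class $\alpha$ Poincar\'e dual to a linear subspace $L$ of complex dimension $(n-2)/2$ satisfies $\alpha \cup h^{(n-2)/2} = 1$, so the intersection pairing witnesses $h^{(n-2)/2}$ as primitive.
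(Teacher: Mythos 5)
Your proof is correct and takes essentially the same approach as the paper: the $(0,1)$ case via Proposition~\ref{E01-topology} and Milnor's bouquet theorem, and the remaining cases via the Gysin sequence of the $\CC^\times$-bundle from Proposition~\ref{prop:euler-class-covering}, combined with the Lefschetz analysis of $H^\bullet(S)$, Lemma~\ref{lem:lefschetz-surjective}, and the primitivity of $h^{(n-2)/2}$. The only (immaterial) difference is that you run the Gysin sequence in homology directly, whereas the paper computes $H^\bullet_c(\widetilde{E}^\circ_{(\kappa,r)})$ via the compactly supported Thom--Gysin sequence and then applies Poincaré duality on the total space; these are Poincaré-dual packagings of the same computation.
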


\begin{proof}
    
    By Proposition \ref{E01-topology} and \cite[Thm. 6.5]{milnor1968}, $\widetilde{E}^\circ_{(0,1)}$ has the homotopy type of a bouquet of $\mu(h)$ spheres of dimension $n-1$. This shows that its homology groups are as claimed.

    For all $(\kappa, r) \neq (1,0), (0,1)$, the space $\widetilde{E}^\circ_{(\kappa, r)}$ is a manifold of (real) dimension $(2n-2)$, so in order to know its homology we can instead compute its compactly supported cohomology and apply Poincaré duality.
    For this purpose, we are going to use the compactly supported Thom-Gysin sequence of the fibration $\widetilde{\pi}_{(\kappa, r)}: \widetilde{E}^\circ_{(\kappa, r)} \to S$, whose Euler class we know thanks to Proposition~\ref{prop:euler-class-covering}.
    Recall that the compactly supported Thom-Gysin sequence \cite[Thm. 5.7.11]{spanier1966} is
    \[
        \cdots \to H^k_c(\widetilde{E}^\circ_{(\kappa, r)}) \longrightarrow H^{k-2}_c(S) \xrightarrow{\smile e(\widetilde{\pi}_{(\kappa, r)})} H^{k}_c(S) \longrightarrow H^{k+1}_c(\widetilde{E}^\circ_{(\kappa, r)}) \to \cdots
    \]
    Since $S$ is compact, its compactly supported cohomology is just the usual cohomology that we recalled above.
    The parts of the Thom-Gysin sequence that are relevant to us are
    \[\begin{tikzcd}[row sep=0pt]
        {H^{-2}(S)} & {H^{0}(S)} & {H^{1}_c(\widetilde{E}^\circ_{(\kappa, r)})} & {H^{-1}(S)} \\
        0 & {\mathbb{Z}} && 0
        \arrow[from=1-1, to=1-2]
        \arrow[from=1-2, to=1-3]
        \arrow[from=1-3, to=1-4]
    \end{tikzcd}\]
    and also
    \[\begin{tikzcd}[row sep=0pt]
        {H^{2n-3}(S)} & {H^{2n-2}_c(\widetilde{E}^\circ_{(\kappa, r)})} & {H^{2n-4}(S)} & {H^{2n-2}(S)} \\
        0 && {\mathbb{Z}} & 0
        \arrow[from=1-1, to=1-2]
        \arrow[from=1-2, to=1-3]
        \arrow[from=1-3, to=1-4]
    \end{tikzcd}\]
    and finally,
    \[\begin{tikzcd}[row sep=0pt, column sep=4pt]
        {H^{n-4}(S)} & {H^{n-2}(S)} & {H^{n-1}_c(\widetilde{E}^\circ_{(\kappa,r)})} & {H^{n-3}(S)} & {H^{n-1}(S)} & {H^{n}_c(\widetilde{E}^\circ_{(\kappa,r)})} & {H^{n-2}(S)} & {H^{n}(S)} \\
        0 & {\mathbb{Z}^b} && {\mathbb{Z}} & {\mathbb{Z}} && {\mathbb{Z}^b} & 0 \\
        {\mathbb{Z}} & {\mathbb{Z}^b} && 0 & 0 && {\mathbb{Z}^b} & {\mathbb{Z}}
        \arrow[from=1-1, to=1-2]
        \arrow[from=1-2, to=1-3]
        \arrow[from=1-3, to=1-4]
        \arrow[from=1-4, to=1-5]
        \arrow[from=1-5, to=1-6]
        \arrow[from=1-6, to=1-7]
        \arrow[from=1-7, to=1-8]
        \arrow[hook, from=3-1, to=3-2]
        \arrow[two heads, from=3-7, to=3-8]
        \arrow["{\times d}", from=2-4, to=2-5]
    \end{tikzcd}\]
    where the first line indicates the value of each group in case $n$ is odd, and the second line is the case where $n$ is even. 
    
    In the case where $n$ is odd, we see that $H^{n-1}_c(\widetilde{E}^\circ_{(\kappa, r)}) \cong \mathbb{Z}^b$ and that $H^{n}_c(\widetilde{E}^\circ_{(\kappa, r)})$ is an extension of $\mathbb{Z}^b$ by $\mathbb{Z}/(d)$, which must be trivial because $\mathbb{Z}^b$ is free. 
    In the case where $n$ is even, the map $\mathbb{Z}^b \twoheadrightarrow \mathbb{Z}$ is surjective by Lemma~\ref{lem:lefschetz-surjective} and therefore the $H^{n}_c(\widetilde{E}^\circ_{(\kappa, r)}) \cong \mathbb{Z}^{b-1}$.
    On the other hand, the image of $\ZZ \hookrightarrow \ZZ^b$ is generated by $h^{(n-2)/2}$, which is a primitive element. 
    Indeed, suppose $h^{(n-2)/2} = k \beta$ for some $k \in \ZZ$, $\beta \in H^{n-2}(S)$ and consider the class $\alpha$ constructed in the proof of Lemma~\ref{lem:lefschetz-surjective}. Then,
    \[
    kd(\alpha \smile \beta) = d(\alpha \smile (k\beta)) = d(\alpha \smile h^{(n-2)/2}) = h^{n-2}
    \]
    which shows $k = \pm 1$ because $h^{n-2}$ is primitive. Hence the quotient is $H^{n-1}_c(\widetilde{E}^\circ_{(\kappa, r)}) \cong \ZZ^{b-1}$.
\end{proof}

\subsection{Degeneration of the McLean spectral sequence}

Now that we have computed the homology of the cyclic covers, we just need to place each homology group at the correct $p,q$ coordinates of the $E_1$ page. 
To do so, we need some information about the quotients $b_E/N_E$.

\begin{lemma} \label{b_E-monotone}
    There exists an ample divisor $H = \sum_{E \in \mathcal{E}} b_E E$ on $X$ with $b_E \leq 0$ for all $E \in \mathcal{E}$ and satisfying the following property: 
    if $E, F \in \mathcal{E}$ are such that $E$ is closer to $E_{(0,1)}$ than $F$ in the chain of divisors (i.e. further left in Figure \ref{fig:resolution}), then $b_E/N_E < b_F/N_F$.
\end{lemma}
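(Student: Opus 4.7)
The plan is to construct $H$ by induction along the sequence of blow-ups \eqref{eq:sequence-blow-ups}: I build an ample divisor $H^{(i)} = \sum_{E} b_E^{(i)} E$ on each intermediate $X^{(i)}$ satisfying $b_E^{(i)} \leq 0$ and strict monotonicity of the ratios $b_E^{(i)}/N_E$ along the chain, and then set $H \coloneqq H^{(\ell)}$. For the base case on $X^{(1)} = \Bl_0 \CC^n$, take $H^{(1)} = -a_1 E_{(0,1)}$ for some $a_1 > 0$: the only two divisors are $E_{(0,1)}$ and $E_{(1,0)}$ with ratios $-a_1/d$ and $0$ respectively, which satisfy the required ordering since $E_{(0,1)}$ is to the left.

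For the inductive step, say $\mu^{(i+1)}$ blows up the intersection $Z = E \cap F$ of two adjacent divisors $E,F$ in the chain with $E$ closer to $E_{(0,1)}$, producing the exceptional divisor $G = E_{(\kappa_E+\kappa_F,\, r_E+r_F)}$. Define
\[ H^{(i+1)} \coloneqq (\mu^{(i+1)})^* H^{(i)} - a_{i+1}\, G \]
for some $a_{i+1} > 0$ to be chosen. Since $Z$ is contained in $E$ and $F$ but in no other divisor of $\mathcal{E}$, pullback gives $(\mu^{(i+1)})^* E = E + G$, $(\mu^{(i+1)})^* F = F + G$, and all other divisors pull back to themselves; hence only the coefficient of $G$ changes, becoming $b_E^{(i)} + b_F^{(i)} - a_{i+1}$, while all others are preserved. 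Using $N_G = N_E + N_F$ from \eqref{eq:N-nu-formulas}, the new ratio equals the Farey mediant $(b_E^{(i)} + b_F^{(i)})/(N_E + N_F)$ shifted downward by $a_{i+1}/(N_E + N_F)$. The mediant inequality places the unshifted value strictly between $b_E^{(i)}/N_E$ and $b_F^{(i)}/N_F$ (distinct by induction), so for $a_{i+1} > 0$ small enough the new ratio also lies strictly between them, preserving monotonicity along the whole chain. Ampleness of $H^{(i+1)}$ likewise holds for all sufficiently small $a_{i+1} > 0$ by the standard fact that the pullback of an ample divisor minus a small multiple of the new exceptional divisor remains ample, so both conditions can be satisfied simultaneously by a small positive $a_{i+1}$.

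Iterating $\ell$ times produces a divisor on $X$ with rational coefficients; a final scaling by a sufficiently divisible positive integer, which preserves both ampleness and the strict inequalities on ratios, yields integer coefficients as required. The main technical obstacle I anticipate is upgrading this construction to yield an \emph{actual} ample divisor on the quasi-projective variety $X$, rather than only a $\mu$-ample one over $\CC^n$. This can be handled by carrying out the analogous sequence of blow-ups on the projective compactification $\PP^n$ of $\CC^n$, adding at the outset a large multiple of the pullback of a hyperplane class from $\PP^n$ to ensure global ampleness of the resulting divisor on the compactified total space, and then restricting to $X$; since the strict transform of the hyperplane at infinity does not meet any divisor of $\mathcal{E}$, this restriction has precisely the form $\sum_{E \in \mathcal{E}} b_E E$ with the coefficients constructed above.
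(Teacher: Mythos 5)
Your proof is correct and follows essentially the same route as the paper's: induction along the blow-up sequence \eqref{eq:sequence-blow-ups}, base case $-E_{(0,1)}$, and at each step the pullback of the previous ample divisor combined with a negative multiple of the new exceptional divisor, with the mediant inequality giving monotonicity of the ratios (the paper phrases this as $-E' + a(\mu')^*H$ for $a \gg 0$ rather than your $(\mu^{(i+1)})^*H - a_{i+1}G$ for $a_{i+1}$ small followed by rescaling, which is the same up to a global positive scalar). Your worry about global versus relative ampleness is moot here: since $\CC^n$ is affine, relative ampleness over it coincides with ampleness, so the compactification detour is unnecessary.
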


\begin{proof}
    We construct the divisor $H$ inductively. 
    As the base case, consider the blow-up $\beta: \Bl_0 \mathbb{C}^n \to \mathbb{C}^n$, so that the only divisors in $\mathcal{E}$ are the strict transform $E_{(1,0)}$ and the exceptional divisor $E_{(0,1)}$. 
    Then $H = -E_{(0,1)}$ has the desired properties, since it is ample by \cite[Prop. II.7.10]{hartshorne} and $b_{(0,1)}/N_{(0,1)} = -1/d < 0/1 = b_{(1,0)}/N_{(1,0)}$.

    Now suppose that we have already constructed an ample divisor $H = \sum_{E \in \mathcal{E}} b_E E$ with the desired properties for a resolution $\mu: X \to \mathbb{C}^n$. 
    Suppose that we blow-up the intersection $E \cap F$ of two adjacent divisors in $\mathcal{E}$, obtaining a map $\mu': X' \coloneqq \Bl_{E \cap F} X \to X$.
    Denote the exceptional divisor of $\mu'$ by $E'$.
    Consider the divisor $H' = -E' + a \cdot (\mu')^*(H)$ on $X'$, where $a$ is a large positive integer that we still have to choose.
    By \cite[Prop. II.7.10]{hartshorne}, $H'$ is ample if $a$ is large enough. Furthermore, for large enough $a$ we also have
    \[
        \frac{ab_E}{N_E} < \frac{-1 + a(b_E + b_F)}{N_E + N_F} < \frac{ab_F}{N_F}, \text{ i.e. }
        \frac{b'_E}{N_E} < \frac{b'_{E'}}{N_{E'}} < \frac{b'_F}{N_F},
    \]
    as we wanted. This concludes the induction and the proof.
\end{proof}

    

\begin{theorem} \label{thm:degeneration-mclss}
    Let $n \geq 3, d \geq 1, m \geq 1$ be integers. Suppose that 
    \begin{equation} \label{eq:condition-degeneration}
        2k(d-n) + 1 \notin \{ \pm 1, \pm(n-1), \pm(n-2), \pm(2n-3) \} \quad \text{for all } k \in [1, m/d) \cap \ZZ. 
    \end{equation}
    Then the McLean spectral sequence \eqref{eq:mclss} associated to the minimal $m$-separating log resolution of a semihomogeneous singularity of degree $d$ in $\mathbb{C}^n$ degenerates at the first page. In particular, if $2 \leq d < n/2$ or $d > 2n - 2$ then the spectral sequence degenerates for all $m \geq 1$.
\end{theorem}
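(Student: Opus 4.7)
The plan is to analyze the first page of \eqref{eq:mclss} directly, using the topological data from the previous subsection. By Lemma~\ref{b_E-monotone}, I fix an ample divisor $H = \sum_E b_E E$ ensuring that distinct $m$-divisors in $\mathcal{E}\setminus\mathcal{P}$ contribute to distinct columns $p = m b_E/N_E$, ordered monotonically with their position in the chain of Figure~\ref{fig:resolution}. Consequently, any differential $d_\ell$ with $\ell \geq 1$ connects entries coming from two distinct exceptional $m$-divisors $E_1, E_2$ with $p_1 < p_2$, i.e.\ with $E_1$ to the left of $E_2$. For each such divisor $E_{(\kappa,r)}$ (so $\kappa + rd \mid m$) I set $j = mr/(\kappa+rd) \in \mathbb{Z}_{>0}$ and $s = 2j(d-n)$; by \eqref{eq:N-nu-formulas} the shift $2m((\nu_{(\kappa,r)}/N_{(\kappa,r)}) - 1)$ appearing in \eqref{eq:mclss} equals $-s$, and Corollary~\ref{cor:homology-computation} then identifies the allowed values of $p+q$ in the column attached to $E_{(\kappa,r)}$ as $\{s,\, n-1+s\}$ when $(\kappa,r) = (0,1)$ and $\{s,\, 1+s,\, n-1+s,\, 2-n+s\}$ otherwise.

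A nonzero differential then requires a pair of entries $(t_1, t_2)$ in the respective sets satisfying $t_2 = t_1 + 1$. Writing $t_i = D_i + s_i$ with $D_i \in \{0, 1, n-1, 2-n\}$, this amounts to $D_2 - D_1 = 1 + 2(j_1 - j_2)(d-n)$. The function $j = m/(\kappa/r + d)$ is strictly decreasing along the chain, and distinct $m$-divisors yield distinct positive integer values of $j$, maximized at $E_{(0,1)}$ where $j = m/d$. Because $E_1$ lies to the left of $E_2$, $k \coloneqq j_1 - j_2$ is a positive integer, and the above discussion also forces $k < m/d$. The possible differences $D_2 - D_1$ form the set $\{0, \pm 1, \pm(n-1), \pm(n-2), \pm(2n-3)\}$; the value $0$ would require $2k(n-d) = 1$, impossible over $\mathbb{Z}$, and the remaining values are precisely those excluded by hypothesis \eqref{eq:condition-degeneration}. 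Therefore every $d_\ell$ with $\ell \geq 1$ vanishes, and $E_1 = E_\infty$.

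The ``in particular'' assertion reduces to two size estimates. If $d > 2n - 2$, then $2k(d-n) + 1 \geq 2(n-1) + 1 > 2n - 3$ for every $k \geq 1$, exceeding every forbidden value. If $2 \leq d < n/2$, then $2k(d-n) + 1 < -(n-1)$ for every $k \geq 1$, and the only forbidden value below $-(n-1)$ is $-(2n-3)$; its occurrence would force $k(n-d) = n-1$, and since $n - d > n/2$ this forces $k = 1$ and $d = 1$, contradicting $d \geq 2$. The main obstacle I anticipate is the bookkeeping around the parametrization $(\kappa, r) \leftrightarrow j$: the injectivity and the sharp bound $j \leq m/d$ both rely on the coprimality $\gcd(\kappa, r) = 1$, and must be handled with care so that the integer $k$ really ranges over the interval stipulated by the hypothesis.
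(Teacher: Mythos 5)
Your proposal is correct and follows essentially the same route as the paper: order the columns via Lemma~\ref{b_E-monotone}, use \eqref{eq:N-nu-formulas} to compute the degree shift $2m(\nu_E/N_E-1)$ for each exceptional $m$-divisor, read off the allowed total degrees from Corollary~\ref{cor:homology-computation}, and reduce a hypothetical nonzero differential to the excluded arithmetic condition $2k(d-n)+1\in\{0,\pm 1,\pm(n-1),\pm(n-2),\pm(2n-3)\}$ with the value $0$ ruled out by parity. Your parametrization $j=mr/(\kappa+rd)$ coincides with the paper's index $-i$, and your size estimates for $d>2n-2$ and $2\le d<n/2$ match the paper's.
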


\begin{proof}
    By \eqref{eq:N-nu-formulas}, the $m$-divisors in the resolution $\mu: X \to \mathbb{C}^n$ are in one-to-one correspondence with coprime pairs $(\kappa, r) \in \mathbb{Z}_{\geq 0}^2$ such that $\kappa + rd$ divides $m$. 
    In turn, these are in one-to-one correspondence with (not necessarily coprime) pairs $(a, b) \in \mathbb{Z}_{\geq 0}^2$ such that $a + bd = m$ (the bijection is given by $(a, b) \mapsto (\kappa, r) = (a, b)/\gcd(a,b)$). 
    The pairs satisfying this latter condition are precisely
    \begin{equation} \label{eq:kappa-r-formula}
        \{ (a, b) = (m + id, -i) \mid i \in [-m/d, 0] \cap \ZZ \}.
    \end{equation}
    Let $E_{-\lfloor m/d \rfloor}, \ldots, E_{-1}, E_0$ be the corresponding $m$-divisors, so that $E_{i} = E_{(m+id, -i)/\gcd}$.
    Note that $E_0$ is the strict transform $E_{(1,0)}$, and whenever $d$ divides $m$ we have $E_{-m/d} = E_{(0,1)}$.
    For a cleaner notation, denote by $N_i, \nu_i, b_i$ the constants $N_{E_i}, \nu_{E_i}, b_{E_i}$ associated to $E_i$.
    The indexing of the $E_i$ has been chosen so that $i < j$ if and only if $E_i$ is closer to $E_{(0,1)}$ than $E_j$, and hence $b_i/N_i < b_j/N_j$ by Lemma \ref{b_E-monotone}.
    Since we know the coprime pair associated to each divisor $E_i$, we can compute explicitly using \eqref{eq:N-nu-formulas} that
    \begin{equation} \label{eq:nu/N-computation}
        2m\left(\frac{\nu_i}{N_i} - 1\right) = 2m \left(  \frac{(m+id) - in}{(m+id) - id} - 1 \right) = 2i(d-n).
    \end{equation}

    Let $p_i = mb_i/N_i$, so that the homology of $\widetilde{E}_i^\circ$ appears in the column $p_i$ of the first page of the McLean spectral sequence \eqref{eq:mclss}.
    Let $\ell \geq 1$ be an integer, and suppose that we have already shown that all the differentials of the McLean spectral sequence are zero up to the $\ell$-th page, so $E_{\ell}^{p,q} \cong E_1^{p,q}$ for all $p, q$.
    Suppose that there is a nonzero differential on the page $E_{\ell}$, which necessarily goes from a column $p_i$ to a column $p_j$ with $i < j$, say $0 \neq d_{\ell}^{p_i, p_j}: E_{\ell}^{p_i, q_i} \to E_{\ell}^{p_j, q_j}$.
    By definition of the differentials, we have 
    \begin{equation} \label{pj-qj-formulas}
        p_j = p_i + \ell \qquad \text{and} \qquad q_j = q_i - \ell + 1.
    \end{equation}
    Using Theorem \ref{thm:mclss} and \eqref{eq:nu/N-computation}, we have
    \[
        E_{\ell}^{p_i, q_i} \cong E_1^{p_i, q_i} = H_{n-1-(p_i+q_i)-2i(d-n)}(\widetilde{E}_i^\circ),
    \]
    and analogously for $E_{\ell}^{p_j, q_j}$. Since we are assuming that the differential is nonzero, both the domain and codomain must be nonzero. 
    By Corollary \ref{cor:homology-computation}, this implies $-\lfloor m/d \rfloor \leq i < j \leq -1$ and
    \[
    \begin{cases}
        n-1-(p_i+q_i)-2i(d-n) \in \{ 0, n-2, n-1, 2n-3 \}, \\
        n-1-(p_j+q_j)-2j(d-n) \in \{ 0, n-2, n-1, 2n-3 \}.
    \end{cases}   
    \]
    Note that this condition covers both cases $(\kappa, r) = (0, 1)$ and $(\kappa, r) \neq (0, 1)$. Subtracting both terms and using \eqref{pj-qj-formulas}, we get
    \[
        2(j-i)(d-n) + 1 \in \{ 0, \pm 1, \pm(n-1), \pm(n-2), \pm(2n-3) \}.
    \]
    To finish the proof, put $k = j - i$ and observe that $1 \leq k \leq \lfloor m/d \rfloor - 1$ and that $2k(d-n) + 1$ is an odd integer, and therefore is never zero.

    Now suppose $d > 2n - 2$. Then for $1 \leq k \leq \lfloor m/d \rfloor - 1$ we have
    \[
    2k(d-n) + 1 > 2 \cdot 1 \cdot (2n - 2 - n) + 1 = 2n - 3,
    \]
    and therefore condition \eqref{eq:condition-degeneration} for having zero differentials is always satisfied.
    Similarly, if $d < n/2$ then
    \[
    2k(d-n) + 1 < 2k (-n/2) + 1 = -kn + 1.
    \]
    Hence for $k \geq 2$ this is strictly smaller than $-2n+3$ and condition \eqref{eq:condition-degeneration} is satisfied. If $k = 1$ then $2k(d-n) + 1 < -n+1$, so the only option for \eqref{eq:condition-degeneration} to not hold is $2(d-n) + 1 = -2n + 3$, which implies $d = 1$.
\end{proof}

\section{Cohomology of contact loci} 
\label{sec:contact}

\subsection{Preliminaries}

Let $m$ be a positive integer. 
Recall that an \emph{$m$-jet} in $\CC^n$ is a morphism 
\[
    \gamma: \allowbreak \Spec \CC[t]/(t^{m+1}) \to \mathbb{A}_\CC^n = \Spec \CC[x_1, \ldots, x_n].
\]
The morphism $\gamma$ is uniquely determined by the images of the variables $x_i$ under the associated ring map $\gamma^\sharp: \CC[x_1, \ldots, x_n] \to \CC[t]/(t^{m+1})$. 
We denote those images by
\[
    \gamma^i(t) = \sum_{j=0}^{m} \gamma^i_j t^j \coloneqq \gamma^\sharp(x_i) \in \CC[t]/(t^{m+1}), \quad \text{ where } \gamma^i_j \in \mathbb{C}.
\]
We group the coefficients of each power of $t$ in a vector $\gamma_j \coloneqq (\gamma_j^1, \ldots, \gamma_j^n) \in \CC^n$, and identify the $m$-jet $\gamma$ with the corresponding truncated power series with coefficients in $\CC^n$, i.e. $\gamma(t) = \sum_{j=0}^{m} \gamma_j t^j$.
Similarly, an \emph{arc} in $\CC^n$ is a morphism $\gamma: \allowbreak \Spec \CC \llbracket t \rrbracket \to \mathbb{A}_\CC^n = \Spec \CC[x_1, \ldots, x_n].$

Recall that there is a $\CC$-scheme $\LL_m \coloneqq \LL_m(\CC^n) \coloneqq \Spec \CC[\{\gamma^i_j\}^{i=1,\ldots,n}_{j=0,\ldots,m}]$, called the \emph{$m$-jet space} of $\CC^n$, whose closed points are precisely the $m$-jets in $\CC^n$, see \cite{ein2004}, \cite{jet-arc}. 
For $m' < m$ there is a natural truncation map $\pi^m_{m'}: \LL_m \to \LL_{m'}$.
The inverse limit exists, and we denote it by $\LL_\infty \coloneqq \LL_\infty(\CC^n) \coloneqq \varprojlim_m \LL_m$ and call it the \emph{arc space} of $\mathbb{C}^n$.
By \cite[Thm. 1.1]{bhatt2016}, the closed points of $\mathcal{L}_\infty$ are in bijection with the arcs in $\CC^n$.

For any polynomial $f \in \mathbb{C}[x_1, \ldots, x_n]$, the \emph{restricted $m$-contact locus} associated to $f$ is 
\[
\X_m \coloneqq \X_m(f, 0) \coloneqq \{ \gamma \in \LL_m(\CC^n) \mid \gamma(0)=0, f(\gamma(t)) \equiv t^m\ (\text{mod } t^{m+1})\}.
\]

Note that the contact loci are equipped with the analytic topology coming from the inclusion $\X_m \subset \LL_m \cong \CC^{n(m+1)}$. 
Whenever we refer to the cohomology of $\X_m$ (e.g. in the statement of the arc-Floer conjecture) we are referring to the space equipped with the analytic topology. 
Unless otherwise stated, all cohomology groups should be understood to have integral coefficients. 

\subsection{Filtering by the order}

We are interested in the case where $f \in \CC \llbracket x_1,\ldots,x_n \rrbracket$ is a semihomogeneous power series of degree $d$, i.e. $f = h + F$ with $h$ homogeneous of degree $d$ and having $0$ as an isolated critical point and $F$ of order strictly greater than $d$, see Definition~\ref{def:semihomogeneous}. 
The initial term $h$ defines a smooth hypersurface in $\PP^{n-1}$ which we denote by $S$, just as we did in Section \ref{sec:floer}.
We consider the \emph{increasing} filtration $F_\bullet \LL_m$ on the jet space by the \emph{negative} order of the jets, i.e.
\begin{equation} \label{eq:order-filtration}
    F_p \LL_m \coloneqq \{\gamma \in \LL_m \mid \ord_t(\gamma) \geq -p\},
\end{equation}
where $\ord_t(\gamma) \coloneqq \min\{i \in \ZZ_{\geq 0} \mid \gamma_i \neq 0\}$. The ``graded pieces'' of this filtration are
\[
    F_{(p)}\LL_m \coloneqq F_p \LL_m \setminus F_{p-1} \LL_m = \{\gamma \in \LL_m \mid \ord_t(\gamma) = -p\}.
\]
Since the filtration $F_\bullet \LL_m$ is concentrated in negative degrees, we will denote $\rho \coloneqq -p$ to simplify the notation, so that the jets in $F_{(p)}$ are those of order $\rho$.
Consider the filtration $F_\bullet \X_m$ on the contact locus induced by $F_\bullet \LL_m$, whose graded pieces are $F_{(p)} \X_m = \X_m \cap F_{(p)} \LL_m$.

\begin{proposition} \label{prop:filtered-topology}
    The graded pieces $F_{(p)} \X_m$ have the following description:
    \begin{enumerate}[label=(\roman{*})]
        \item \label{topology-empty} If $p \notin [-m/d, -1] \cap \ZZ$, then $F_{(p)} \X_m = \varnothing$. Otherwise, (ii)-(iv) hold.

        \item \label{zero-free-variables} For $\gamma \in F_{(p)} \X_m,\ p \in [-m/d, -1] \cap \ZZ$, the variables $\gamma_0, \ldots, \gamma_{\rho-1}$ are zero and the variables $\gamma_{m-(d-1)\rho+1},\ldots,\gamma_m$ are not subject to any equation. Formally,
        \[
        \pi^m_{\rho-1}(F_{(p)} \X_m) = \{0\}, \quad
        F_{(p)} \X_m = (\pi^m_{m-(d-1)\rho})^{-1}(\pi^m_{m-(d-1)\rho}(F_{(p)} \X_m)).
        \]

        \item \label{lowest-order} The variable $\gamma_\rho$ is only constrained by one equation; namely $f(\gamma_\rho) = 0$ if $p \in (-m/d, -1]$, and $f(\gamma_\rho) = 1$ if $p = -m/d$. Formally,
        \[
        \pi^m_\rho(F_{(p)} \X_m) \cong \begin{cases}
            CS^\circ & \text{ if } -m/d < p \leq -1, \\
            M_h & \text{ if $d$ divides $m$ and } p = -m/d,
        \end{cases}
        \]
        where $CS^\circ = \{x \in \mathbb{C}^n \mid f(x) = 0\} \setminus \{0\}$ is the affine cone over the projective hypersurface $S$ with the origin removed, and $M_h = \{x \in \mathbb{C}^n \mid h(x) = 1\}$ is the affine Milnor fiber of $h$.
        
        \item \label{truncation-bundle} For $a \in [\rho + 1, m-(d-1)\rho] \cap \ZZ$, the variable $\gamma_a$ is restricted to a hyperplane that depends holomorphically in the lower-order variables and in fact this hyperplane is, after translating it to the origin, the tangent hyperplane of $\pi^m_\rho(F_{(p)} \X_m)$ at $\gamma_\rho$. 
        Formally, the restriction of the truncation map
        \[
        \pi^a_{a-1}|: \pi^m_a(F_{(p)} \X_m) \to \pi^m_{a-1}(F_{(p)} \X_m)
        \]
        is a complex affine bundle, whose model vector bundle is the pullback by $\pi^{a-1}_\rho$ of the tangent bundle of $\pi^m_\rho(F_{(p)} \X_m)$.
    \end{enumerate}
\end{proposition}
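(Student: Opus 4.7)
The strategy is to compute explicitly the $m+1$ polynomial equations defining $\X_m$ inside $\LL_m$, namely $[t^\ell] f(\gamma(t)) = \delta_{\ell,m}$ for $\ell = 0, \ldots, m$, and to track, equation by equation, which jet coordinates $\gamma^i_j$ appear in each. Part (i) is the quickest: $p > 0$ is excluded because $\ord_t(\gamma) \geq 0$; $p = 0$ contradicts the basepoint condition $\gamma(0) = 0$; and if $\rho = -p > m/d$ then substituting a series of $t$-order $\rho$ into $f = h + F$ yields a series of $t$-order $\geq d\rho > m$, which cannot agree with $t^m$ modulo $t^{m+1}$.

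For (ii)--(iv), the key calculation is to write $\gamma(t) = \gamma_\rho t^\rho + \eta(t)$ with $\eta$ of $t$-order $\geq \rho+1$ and to Taylor-expand
\[
    h(\gamma(t)) \;=\; \sum_{k=0}^{d} \frac{t^{(d-k)\rho}}{k!}\, D^k h(\gamma_\rho)\bigl[\eta(t)^{\otimes k}\bigr],
\]
using that $D^k h$ is homogeneous of degree $d-k$. A term-by-term inspection shows that the $k$-th summand contributes to $[t^\ell]$ only for $\ell \geq d\rho + k$, and its contribution involves only $\gamma_j$ with $j \leq \ell - (d-1)\rho - (k-1)$. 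In particular, the highest-index coordinate $\gamma_{\ell - (d-1)\rho}$ appears only for $k=1$ and enters linearly, as $\nabla h(\gamma_\rho) \cdot \gamma_{\ell-(d-1)\rho}$. An analogous analysis for the tail $F$ (of order $\geq d+1$) shows that $[t^\ell] F(\gamma(t))$ involves only $\gamma_j$ with $j \leq \ell - d\rho$, strictly below the leading $h$-contribution.

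From this structural observation all three remaining claims follow. The vanishing $\gamma_0 = \cdots = \gamma_{\rho-1} = 0$ in (ii) is the definition of $\ord_t(\gamma) = \rho$; and since the defining equations involve only $\gamma_j$ with $j \leq m-(d-1)\rho$, the coordinates with $j > m-(d-1)\rho$ are unconstrained, giving the second half of (ii). For (iii), the equation at $\ell = d\rho$ reduces to $h(\gamma_\rho) = \delta_{d\rho,m}$; combined with $\gamma_\rho \neq 0$, this identifies $\pi^m_\rho(F_{(p)}\X_m)$ with $CS^\circ$ when $d\rho < m$ and with $M_h$ when $d\rho = m$. For (iv), setting $a = \ell - (d-1)\rho \in [\rho+1,\, m-(d-1)\rho]$, the equation at $t^\ell$ takes the form
\[
    \nabla h(\gamma_\rho) \cdot \gamma_a \;=\; \delta_{a,\, m-(d-1)\rho} \;-\; c(\gamma_\rho, \ldots, \gamma_{a-1})
\]
for some polynomial $c$ in the lower-order coefficients. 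Because $h$ has an isolated critical point at $0$, $\nabla h(\gamma_\rho)$ is nonzero on $CS^\circ \cup M_h$, so this cuts out an affine hyperplane in $\gamma_a$ whose direction is $\ker \nabla h(\gamma_\rho) = T_{\gamma_\rho}\pi^m_\rho(F_{(p)}\X_m)$; this yields the affine bundle structure and identifies the model as the pullback of the tangent bundle of $\pi^m_\rho(F_{(p)}\X_m)$ by $\pi^{a-1}_\rho$.

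The main obstacle is the index bookkeeping in the Taylor expansion: one must isolate cleanly the term linear in $\gamma_{\ell-(d-1)\rho}$ and verify that every other contribution (the $k \geq 2$ terms in the expansion of $h$ and all of $F$) depends only on strictly lower-index coordinates. Once this uniform description of $[t^\ell]f(\gamma(t))$ is established, the three statements (ii)--(iv) fall out together with essentially no further work.
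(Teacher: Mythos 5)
Your proposal is correct and follows essentially the same route as the paper: Taylor-expanding $f(\gamma(t))$ around $\gamma_\rho t^\rho$, using homogeneity of $D^k h$ and the order bound on $F$ to isolate, in the coefficient of each $t^\ell$, the linear term $\nabla h(\gamma_\rho)\cdot\gamma_{\ell-(d-1)\rho}$ plus lower-index contributions, from which (ii)--(iv) follow as you describe. Your index bookkeeping (the $k$-th summand involving only $\gamma_j$ with $j \leq \ell-(d-1)\rho-(k-1)$, and $F$ contributing only up to $j \leq \ell - d\rho$) checks out and matches the paper's grouping of coefficients.
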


\begin{proof}
    Note that by definition there are no jets $\gamma \in \LL_m$ of negative degree. 
    If $\ord_t \gamma = 0$ then $\gamma(0) \neq 0$ and hence $\gamma \notin \X_m$. 
    And if $\ord_t \gamma = \alpha > m/d$ then $\gamma(t) = t^\alpha \delta(t)$ for a jet $\delta$ with $\delta(0) \neq 0$, thus
    \[
        \ord_t f(\gamma(t)) = \ord_t (t^{d\alpha} h(\delta(t)) + F(t^\alpha \delta(t))) \geq \min\{ d\alpha, \ord_t F(t^\alpha \delta(t)) \} = d\alpha > m
    \]
    by homogeneity of $h$, concluding that $\gamma \notin \X_m$. This proves (i).

    Recall that the jets $\gamma \in F_{(p)} \LL_m$ are of the form $\gamma(t) = \gamma_\rho t^\rho + \cdots + \gamma_m t^m$ with $\gamma_i \in \CC^n$ for $i \in \{\rho, \ldots, m\}$ and $\gamma_\rho \neq 0$. 
    Consider the Taylor expansion for $f(\gamma(t))$ around $\gamma_\rho t^\rho$: 
    \[
    f(\gamma(t)) = \sum_{k = 0}^{d} \dfrac{1}{k!} D^k f (\gamma_\rho t^\rho) \cdot (\gamma_{\rho+1} t^{\rho+1} + \cdots + \gamma_m t^m, \ldots, \gamma_{\rho+1} t^{\rho+1} + \cdots + \gamma_m t^m).
    \]
    Using linearity of the derivative, the fact that $D^k h(x) \cdot (v_1, \ldots, v_k)$ is homogeneous of degree $d-k$ in the variable $x$ and linear in each variable $v_i$, and the fact that $\ord_t D^k F(\gamma_\rho t^\rho) > (d-k)\rho$, we can group the coefficients of each power of $t$ in the expression above. 
    Doing this, we obtain
    \[
        \mathrm{Coef}(f(\gamma(t)), t^{d\rho}) = h(\gamma_\rho)
    \]
    and for every $d\rho < \alpha \leq m$ we get
    \begin{align} \label{coef-equation}
    \mathrm{Coef}(f(\gamma(t)), t^\alpha) 
    &= Dh(\gamma_\rho) \cdot \gamma_{\alpha - (d-1)\rho} + \text{(terms involving only $\gamma_\rho, \ldots, \gamma_{\alpha - (d-1)\rho - 1}$).}
    \end{align}

    The defining equation of the contact locus, which is $f(\gamma(t)) \equiv t^m (\mathrm{mod}\ t^{m+1})$, may therefore be split into $m - d\rho + 1$ non-trivial equations in the variables $\gamma_k$, namely
    \[
        \mathrm{Coef}(f(\gamma(t)), t^\alpha) = 0 \text{ for } d\rho \leq \alpha < m
        \quad \text{ and } \quad
        \mathrm{Coef}(f(\gamma(t)), t^m) = 1.
    \]
    Observe that the variables $\gamma_0, \ldots, \gamma_{\rho-1}$ are zero by definition of $F_{(p)} \X_m$, and that the variables $\gamma_{m-(d-1)\rho+1}, \ldots, \gamma_m \in \CC^n$ do not appear in the equations. This proves (ii).

    If $m = d\rho$, then the only non-trivial equation is $\mathrm{Coef}(f(\gamma(t)), t^{m}) = 1$, i.e. $h(\gamma_\rho) = 1$. 
    This equation defines the global Milnor fiber $M_h$, see \cite[Exercise 3.1.13]{dimca1992}. 
    On the other hand, if $m > d\rho$ then the first non-trivial equation is $\mathrm{Coef}(f(\gamma(t)), t^{d\rho}) = 0$, i.e. $h(\gamma_\rho) = 0$, which defines the affine cone over $S$.
    Since $\gamma \in F_{(p)} \X_m$ we must also have $\gamma_\rho \neq 0$, so putting both equations together we obtain $\gamma_\rho \in CS^\circ$. As we will see next, all the other equations have solutions for every $\rho \in CS^\circ$, and then we will have proved (iii).

    Finally, we see from \eqref{coef-equation} that for every $a \in [\rho + 1, m - (d-1)\rho]$ the first equation in which $\gamma_a$ appears is $\mathrm{Coef}(f(\gamma(t)), t^{a + (d-1)\rho}) = 0$ or $1$. 
    This is an affine equation in $\gamma_a$ whose coefficients depend holomorphically on $\gamma_\rho, \ldots, \gamma_{a-1}$. 
    Furthermore, the linear form $Dh(\gamma_\rho)$ is nonzero because $\gamma_\rho \neq 0$ is a smooth point of $h$, and the equation $Dh(\gamma_\rho) = 0$ defines the tangent bundle of $\pi^m_\rho(F_{(p)} \X_m)$. This proves (iv).
\end{proof}

\subsection{Comparing filtrations} \label{subsec:comparing-filtrations}

We are going to prove that the filtration $F_\bullet \X_m$ is equivalent to the filtration described in \cite[Lemma 2.3]{coho-contact}, for the resolution $\mu: X \to \CC^n$ introduced in \S\ref{subsec:m-sep-resolution} and an ample divisor $H$ with the properties of Lemma~\ref{b_E-monotone}. 
Let us start by reviewing the definition of this other filtration.

Recall that we are denoting by $\mathcal{E}$ the set of irreducible components of the exceptional divisor $(f \circ \mu)^{-1}(0)$. 
Recall also that given an \emph{arc} $\sigma \in \LL_\infty(\CC^n)$ whose generic point is not contained in $0 \in \mathbb{C}^n$, there exists a unique lift $\widetilde{\sigma} \in \LL_\infty(X)$ (i.e. $\sigma = \mu \circ \widetilde{\sigma}$) by the valuative criteria for properness and separatedness.
Finally, recall that, since $\CC^n$ is smooth, the truncation morphism $\pi_m: \LL_\infty(\CC^n) \to \LL_m(\CC^n)$ is locally trivial.
For any $E \in \mathcal{E}$ let $\X_{m,E}$ be the set of $\gamma \in \X_m$ such that there exists an arc $\sigma \in \LL_\infty(\CC^n)$ with $\pi_m(\sigma) = \gamma$ and such that $\widetilde{\sigma}(0) \in E$. Since the log resolution $\mu$ is $m$-separating, we have a decomposition
\[
    \X_m = \bigsqcup_{E \in \mathcal{E}} \X_{m,E},
\]
compare with \cite[Lemma 2.1]{coho-contact}. Fix an ample divisor $H = \sum_{E \in \mathcal{E}} b_E E$ on $X$ with the properties of Lemma~\ref{b_E-monotone}, and define an \emph{increasing} filtration $G_\bullet \X_m$ by
\[
    G_p \X_m \coloneqq \bigsqcup_{\substack{E \in \mathcal{E}, \ N_E \mid m \\ mb_E/N_E \leq p}} \X_{m,E}.
\]

\begin{proposition} \label{prop:filtrations-equiv}
    The filtrations $F_\bullet \X_m$ and $G_\bullet \X_m$ are equivalent in the following sense: 
    there exists an increasing function $\iota: \ZZ \to \ZZ$ such that $F_p \X_m = G_{\iota(p)} \X_m$ for all $p \in \ZZ$.
\end{proposition}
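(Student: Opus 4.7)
The plan is to identify the stratifications underlying the two filtrations and then construct $\iota$ using Lemma~\ref{b_E-monotone}. Following the enumeration in \eqref{eq:kappa-r-formula}, label the $m$-divisors as $E_{-\lfloor m/d\rfloor}, \ldots, E_{-1}, E_0$, where $E_i$ has indices $(\kappa, r) = (m+id, -i)/\gcd(m+id, -i)$. The heart of the proof is the claim that
\[
F_{(p)}\X_m = \X_{m, E_p} \qquad \text{for every } p \in [-m/d, -1] \cap \ZZ,
\]
with $\rho = -p$, and that $\X_{m, E_0} = \varnothing$. Granting this, the two filtrations share the same stratification, and comparing them reduces to comparing the two linear orders on the set of indices.

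To prove the claim, take $\gamma \in F_{(p)}\X_m$, choose an arc $\sigma$ truncating to $\gamma$, and lift it to $\widetilde\sigma \in \LL_\infty(X)$. After the first blow-up $\beta$ the lift lands at $[\gamma_\rho] \in E_{(0,1)} \cong \PP^{n-1}$, with
\[
    \ord_{E_{(0,1)}}\widetilde\sigma = \rho \qquad\text{and}\qquad \ord_{E_{(1,0)}^{(1)}}\widetilde\sigma = m - d\rho,
\]
the second equality being forced by $f(\gamma(t)) \equiv t^m \pmod{t^{m+1}}$. If $\rho = m/d$ then $m - d\rho = 0$, hence $h(\gamma_\rho) = 1$, $[\gamma_\rho] \notin S$, and the lift already sits on $E_{(0,1)}^\circ = E_{-m/d}^\circ$. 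If $\rho < m/d$ then $[\gamma_\rho] \in S$, and successive blow-ups at the intersection of two divisors with both orders positive propagate the pair of orders by the subtractive Euclidean step $(a, b) \mapsto (\min(a,b), |a-b|)$, while the newly introduced exceptional divisor (whose indices are the Farey sum of those of its two parents) picks up the value $\min(a,b)$. The algorithm halts when one of the two orders reaches $0$; by the Stern--Brocot structure of the chain this occurs at the divisor with indices $(m - d\rho, \rho)/g$, where $g = \gcd(m, \rho) = \gcd(m - d\rho, \rho)$. One then checks via \eqref{eq:N-nu-formulas} that this is exactly $E_p$, that $N_{E_p} = m/g$, and that $\ord_{E_p}\widetilde\sigma = g$, so $N_{E_p}\cdot \ord_{E_p}\widetilde\sigma = m$ accounts for all the contact and no further blow-up moves the arc; hence $\widetilde\sigma(0) \in E_p^\circ$. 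The emptiness $\X_{m, E_0} = \varnothing$ comes for free: $E_0 = E_{(1,0)}$ is the strict transform of $f^{-1}(0)$, so $E_0^\circ$ is disjoint from $\mu^{-1}(0)$, while $\gamma(0) = 0$ forces $\widetilde\sigma(0) \in \mu^{-1}(0)$.

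By Lemma~\ref{b_E-monotone} the ratios $b_{E_p}/N_{E_p}$ are strictly increasing as $p$ runs from $-\lfloor m/d\rfloor$ to $-1$, since $E_p$ traverses the chain of Figure~\ref{fig:resolution} from $E_{(0,1)}$ rightward. Set $\iota(p) = m b_{E_p}/N_{E_p}$ for $p \in [-m/d, -1] \cap \ZZ$ and extend $\iota$ to all of $\ZZ$ by any strictly increasing function with $\iota(p) < \iota(-\lfloor m/d\rfloor)$ for $p < -\lfloor m/d\rfloor$ and $\iota(p) > \iota(-1)$ for $p \geq 0$. Then the identity $F_p\X_m = G_{\iota(p)}\X_m$ follows by combining the stratification identification with the monotonicity: for $p \in [-m/d, -1]$, the $m$-divisors with $m b_E/N_E \leq \iota(p)$ are exactly $E_{-\lfloor m/d\rfloor}, \ldots, E_p$, and their strata union gives $F_p\X_m$.

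I expect the subtractive-Euclidean bookkeeping in the middle paragraph to be the main obstacle: conceptually transparent but requiring a careful choice of chart at each blow-up and a careful matching between the halting of the algorithm and the $m$-separating criterion of the resolution. An alternative avoiding some of this computation would be to identify the divisorial valuation attached to $\widetilde\sigma$ on $K(\CC^n)$ with an integer multiple of the valuation defined by $E_p$, but either route encodes the same Stern--Brocot combinatorics.
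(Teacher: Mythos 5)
Your proposal is correct, and its skeleton is the same as the paper's: reduce the statement to an identification of graded pieces $F_{(p)}\X_m = \X_{m,E_p}$ (with $\X_{m,E_0} = \varnothing$), then use the monotonicity of $b_E/N_E$ from Lemma~\ref{b_E-monotone} to define $\iota$. Where you diverge is in the proof of the central identification ``$\ord_t\gamma = \rho$ iff $\widetilde{\sigma}$ centers on $E_{-\rho}$''. The paper gets this in two lines by a global intersection computation: $\ord_t\gamma$ is the intersection number of $\gamma$ with a general hyperplane $L$ through the origin, an easy induction gives $\mu^*L = \widetilde{L} + \sum_{(\kappa,r)\neq(1,0)} r\,E_{(\kappa,r)}$, and combining $\ord_t\gamma = r_E(\widetilde{\sigma}\cdot E)$ with $m = N_E(\widetilde{\sigma}\cdot E)$ yields $\ord_t\gamma = r_E m/N_E = -i$ with no charts and no case analysis. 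Your forward tracking of the pair of orders through the blow-up sequence proves the same statement by the subtractive Euclidean algorithm; it is more laborious but does exhibit the extra information $\ord_{E_p}\widetilde{\sigma} = g = \gcd(m,\rho)$ along the way. If you keep your route, one point should be made explicit: whenever the lift centers on $E\cap F$ with both orders $a,b > 0$, the identity $m = N_E a + N_F b \geq N_E + N_F$ shows that this intersection is necessarily among the centers blown up in \eqref{eq:sequence-blow-ups}, so your algorithm always terminates (with a single positive order) within the resolution $\mu$ rather than ``wanting'' a blow-up that was never performed; and once the center lies in some $E^\circ$ it is disjoint from all later blow-up centers, so it is the final answer. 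With that check spelled out, your argument is complete; the paper's hyperplane trick is simply a cleaner way to package the same Stern--Brocot arithmetic.
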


\begin{proof}
    Recall from the proof of Theorem~\ref{thm:degeneration-mclss} that we may label the $m$-divisors in the resolution $\mu$ (i.e. the $E \in \mathcal{E}$ such that $N_E$ divides $m$) by $E_{-\lfloor m/d \rfloor}, \ldots, E_{-1}, E_0$ in such a way that $i < j$ implies $b_i/N_i < b_j/N_j$. 
    Therefore the function $\iota(i) = mb_i/N_i$ for $i \in [- m/d, 0] \cap \ZZ$ is increasing, and we may extend it arbitrarily to an increasing function $\ZZ \to \ZZ$.
    All we need to show is that the graded pieces of $F_\bullet$ and $G_\bullet$ are equal, i.e. $F_{(p)} \X_m = G_{(\iota(p))} \X_m$ for all $p \in \ZZ$.
    This amounts to showing that an jet $\gamma \in \X_m$ has order $\rho$ if and only if there exists an arc $\sigma \in \LL_\infty(\CC^n)$ such that $\pi_m(\sigma) = \gamma$ and $\widetilde{\sigma}(0) \in E_{-\rho}$.

    Recall that $\ord_t \gamma$ is the interection multiplicity of $\gamma$ with a general hyperplane $L \subset \CC^n$ through the origin. Note that a hyperplane $L$ meets the origin with multiplicity $1$. An easy induction then shows that
    \[
        \mu^* L = \widetilde{L} + \sum_{\substack{E_{(\kappa, r)} \in \mathcal{E} \\ (\kappa, r) \neq (1,0)}} r E_{(\kappa, r)}, 
    \]
    where $\widetilde{L}$ is the strict transform of $L$, see \S\ref{subsec:m-sep-resolution} for the notation $E_{(\kappa, r)}$. 
    Let $\sigma$ be any arc such that $\pi_m(\sigma) = \gamma$ and let $E$ be the unique divisor such that $\widetilde{\sigma}(0) \in E$, whose associated coprime pair we denote by $(\kappa_E, r_E)$. Then
    \[
    \ord_t \gamma =
    \gamma \cdot L = 
    \sigma \cdot L = 
    \widetilde{\sigma} \cdot (\mu^* L) = 
    r_E (\widetilde{\sigma} \cdot E)
    \]
    because for a general $L$ we have $\widetilde{\sigma} \cdot \widetilde{L} = 0$. On the other hand,
    \[
    m = \gamma \cdot f^{-1}(0) =
    \sigma \cdot f^{-1}(0) =
    \widetilde{\sigma} \cdot (\mu^*(f^{-1}(0))) =
    N_E (\widetilde{\sigma} \cdot E).
    \]
    In particular, $E$ is an $m$-divisor, so there is an $i \in \{ - \lfloor m/d \rfloor, \ldots, -1, 0\}$ such that $E = E_i$.
    Recall from \eqref{eq:kappa-r-formula} that the coprime pair corresponding to the divisor $E_i$ is $(m+id, -i)/\mathrm{gcd}$. 
    Putting the previous two equations together and using \eqref{eq:N-nu-formulas} we conclude
    \[
        \ord_t \gamma = \frac{r_E m}{N_E} = \frac{r_E m}{\kappa_E + r_E d} = \frac{-im}{(m + id) - id} = -i.
    \]
    In other words, $\ord_t \gamma = \rho$ if and only if $\sigma$ lifts to $E_{-\rho}$, as we wanted to show.
\end{proof}

\subsection{Degeneration of the order spectral sequence}
\label{subsec:degeneration-clss}

Since $F_\bullet \X_m$ is an increasing filtration by closed subsets, there is a spectral sequence in cohomology with compact support
\begin{equation} \label{eq:clss}
\phantom{}_{F_\bullet \X_m}E_1^{p,q} = H^{p+q}_c(F_{(p)} \X_m) \implies H^{p+q}_c(\X_m),
\end{equation}
see \cite[\href{https://stacks.math.columbia.edu/tag/012K}{Tag 012K}]{stacks-project} or \cite[\S 3-\S 4]{arapura2005}.
In fact, by Proposition~\ref{prop:filtrations-equiv} this spectral sequence coincides, up to a relabeling of the entries, with the spectral sequence $\{\phantom{}_{\text{BFLN}}E_\ell^{p,q}\}_{\ell\geq 0}$ given by Budur et al. \cite[Thm. 1.1]{coho-contact}. 
In turn, the first page of the latter spectral sequence is isomorphic, up to a shift, to the first page of the McLean spectral sequence \eqref{eq:mclss}, see \cite[Rmk. 1.4]{coho-contact}. 
More precisely, using the increasing function $\iota$ from Proposition \ref{prop:filtrations-equiv} and taking into account the index considerations of Remark~\ref{rmk:index-fix}, we have
\begin{equation} \label{eq:spectral-sequence-comparison}
    \phantom{}_\text{McLean} E_1^{\iota(p),q} \cong \phantom{}_\text{BFLN} E_1^{\iota(p),q + (n-1)(2m+1)} \cong \phantom{}_{F^\bullet \X_m} E_1^{p, q + (n-1)(2m+1) + \iota(p) - p}.
\end{equation}
Taking this into account, the \emph{arc-Floer conjecture} was stated in \cite[Conj. 1.5]{coho-contact}. With the corrected shift, the conjecture is
\begin{equation} \tag{AFC} \label{eq:arc-floer-conjecture}
    \HF^\bullet(\phi^m, +) \cong H_c^{\bullet + (n-1)(2m+1)}(\X_m) \quad \text{for every integer } m \geq 1.
\end{equation}

Since the proof of degeneration in Theorem~\ref{thm:degeneration-mclss} only uses the fact that specific terms in the first page are nonzero, we could use exactly the same proof to prove the degeneration of \eqref{eq:clss} in some cases.
Nevertheless, we can exploit the topology of the contact loci that we computed in Proposition~\ref{prop:filtered-topology} to prove degeneration in all cases.
Another setting in which $\phantom{}_\text{BFLN} E_1^{p,q}$ is known to degenerate at the first page is the case of hyperplane arrangements \cite[Prop. 7.4]{contact-hyperplane}; note that this is the case of a homogeneous singularity of degree $d=1$ without the assumption of having an isolated critical point at $0$.

Let us start by introducing an auxiliary space that will appear in the proof of degeneration. 
From now on we will assume that $d \geq 2$ (if $d=1$, then the arguments in this section do not apply, and in fact it is not hard to see that the spectral sequence \eqref{eq:clss} does not degenerate).
Let $p, q, \ell$ be integers, such that $\ell \geq 1$ and such that $F_{(p)} \X_m \neq \varnothing$ and $F_{(p + \ell)} \X_m \neq \varnothing$. 
These integers will eventually be the indices of the differential $d_\ell^{p,q}$. 
Let $k \coloneqq m - (d-1)(\rho - \ell)$, which is the integer such that $\gamma_k$ is the last variable subject to an equation inside $F_{(p+\ell)} \X_m$, according to Proposition~\ref{prop:filtered-topology}.\ref{zero-free-variables}.
Define
\begin{equation} \label{eq:X-definition}
    Z \coloneqq (\pi^m_{k-1})^{-1}(\pi^m_{k-1}(\X_m)),
\end{equation}
i.e. $Z$ is the same space as $\X_m$ up to the variable $\gamma_{k-1}$, but the variables $\gamma_k, \ldots, \gamma_m$ are set to be free.
Note that we have a closed inclusion $i:\X_m \hookrightarrow Z \subset \LL_m$ that respects the filtrations $F_\bullet \X_m$ and $F_\bullet Z = Z \cap F_\bullet \LL_m$. Let us study how the latter inclusion looks like on each graded piece.

\begin{lemma} \label{lem:Z-filtration-low-degrees}
    $F_{(r)} Z = F_{(r)} \X_m$ for all $r < p + \ell$.
\end{lemma}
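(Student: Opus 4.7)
The plan is to reduce the equality $F_{(r)} Z = F_{(r)} \X_m$ to bookkeeping of which coordinates of a jet are constrained by the defining equations of $\X_m$, using the explicit description in Proposition~\ref{prop:filtered-topology}.\ref{zero-free-variables}. Since $\X_m \subseteq Z$ by construction, the inclusion $F_{(r)} \X_m \subseteq F_{(r)} Z$ is automatic, so all of the work lies in the reverse inclusion.

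Writing $\rho_r \coloneqq -r$, Proposition~\ref{prop:filtered-topology}.\ref{zero-free-variables} tells us that a jet of order $\rho_r$ belongs to $\X_m$ if and only if its coordinates $\gamma_0, \ldots, \gamma_{m-(d-1)\rho_r}$ satisfy a certain system of polynomial equations, with the remaining coordinates being free. I would translate the hypothesis $r < p + \ell$ into $\rho_r \geq \rho - \ell + 1$ and then, using $d \geq 2$, compute
\[
    m - (d-1)\rho_r \;\leq\; m - (d-1)(\rho - \ell + 1) \;=\; k - (d-1) \;\leq\; k - 1,
\]
so the $\X_m$-equations imposed on a jet of order $\rho_r$ only involve coordinates in the block $\gamma_0, \ldots, \gamma_{k-1}$. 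Given any $\gamma \in F_{(r)} Z$, the definition of $Z$ produces some $\gamma' \in \X_m$ with $\pi^m_{k-1}(\gamma') = \pi^m_{k-1}(\gamma)$; in the relevant range $r \in [-m/d, -1]$ one also has $\rho_r \leq m/d < k$, so the first nonzero coordinate $\gamma_{\rho_r}$ lies inside the shared block and $\gamma'$ inherits the same order $\rho_r$ as $\gamma$. The polynomial identities witnessing $\gamma' \in \X_m$ then apply verbatim to $\gamma$, giving $\gamma \in \X_m \cap F_{(r)} \LL_m = F_{(r)} \X_m$.

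The argument is essentially index bookkeeping with no deep obstacle; the key place where $d \geq 2$ enters is the step $(d-1) \geq 1$ that forces $k - (d-1) \leq k - 1$. For $d = 1$ the reduction breaks down, consistent with the remark in the paper that the spectral sequence need not degenerate in the homogeneous degree-one case.
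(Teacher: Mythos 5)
Your proof is correct and is essentially the paper's own argument: both rest on the observation, via Proposition~\ref{prop:filtered-topology}.\ref{zero-free-variables} and the inequality $m-(d-1)\rho_r\leq k-1$ (valid exactly when $r<p+\ell$ and $d\geq 2$), that membership of an order-$\rho_r$ jet in $\X_m$ is decided by the coordinates $\gamma_0,\ldots,\gamma_{k-1}$; the paper packages this as saturation identities for $\pi^m_{k-1}$, while you argue pointwise by transporting the defining equations from a witness $\gamma'\in\X_m$. The only point worth adding is the degenerate range $r<-m/d$ (which can occur since the lemma quantifies over all $r<p+\ell$): there $F_{(r)}\X_m=\varnothing$ by Proposition~\ref{prop:filtered-topology}.\ref{topology-empty}, and the same truncation comparison shows $F_{(r)}Z=\varnothing$ as well, because a jet of order greater than $m/d$ agreeing with some $\gamma'\in\X_m$ up to level $k-1\geq\lfloor m/d\rfloor$ would force $\operatorname{ord}_t\gamma'>m/d$, which is impossible.
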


\begin{proof}
    We have
    \[
        \pi^m_{k-1}(F_{(r)} Z) = 
        F_{(r)} (\pi^m_{k-1}(Z)) = 
        F_{(r)} (\pi^m_{k-1}(\X_m)) = 
        \pi^m_{k-1}(F_{(r)} \X_m)
    \]
    where the first and last equalities hold because the truncation maps respect the order, and the middle equality holds 
    by definition of $Z$. On the other hand,
    \[
        (\pi^m_{k-1})^{-1}(\pi^m_{k-1}(F_{(r)} Z)) = F_{(r)} Z
        \qquad \text{and} \qquad
        (\pi^m_{k-1})^{-1}(\pi^m_{k-1}(F_{(r)} \X_m)) = F_{(r)} \X_m,
    \]
    where the left equality holds by definition of $Z$ and the right equality holds by Proposition~\ref{prop:filtered-topology}.\ref{zero-free-variables}, using the fact that $\pi^m_{m-(d-1)(-r)} = \pi^{k-1}_{m-(d-1)(-r)} \circ \pi^m_{k-1}$. Observe that $\pi^{k-1}_{m-(d-1)(-r)}$ is well defined if and only if $k > m-(d-1)(-r)$, which holds if and only if $r < p+\ell$, and this explains our assumption. This finishes the proof.
\end{proof}

\begin{lemma} \label{lem:Z-p-ell-bundle-inclusion}
    We have the following description of the inclusion $i: F_{(p + \ell)} \X_m \hookrightarrow F_{(p + \ell)} Z$:
    \begin{enumerate}[label=(\roman{*})]
        \item \label{inclusion-comes-from-below} The following diagram commutes and the vertical arrows are isomorphisms:
        \[\begin{tikzcd}
        	{F_{(p+\ell)} \X_m} & {F_{(p+\ell)} Z} \\
        	{\pi^m_k(F_{(p+\ell)} \X_m) \times \CC^{n(m-k)}} & {\pi^m_k(F_{(p+\ell)} Z) \times \CC^{n(m-k)}}
        	\arrow["i", hook, from=1-1, to=1-2]
        	\arrow["{(\pi^m_k, \pr_{\gamma_{k+1}, \ldots, \gamma_m})}"', "\cong", from=1-1, to=2-1]
        	\arrow["{(\pi^m_k, \pr_{\gamma_{k+1}, \ldots, \gamma_m})}", "\cong"', from=1-2, to=2-2]
        	\arrow["{i \times \id}", hook, from=2-1, to=2-2]
        \end{tikzcd}\]
        
        \item \label{below-is-pullback} There is a commutative diagram
        \[\begin{tikzcd}
        	& {T_{CS^\circ}} && {i_{CS^\circ}^*(T_{\mathbb{C}^n})} \\
        	{\pi^m_k(F_{(p+\ell)} \X_m)} && {\pi^m_k(F_{(p+\ell)} Z)} \\
        	& {CS^\circ} && {CS^\circ} \\
        	{\pi^m_{k-1}(F_{(p+\ell)} \X_m)} && {\pi^m_{k-1}(F_{(p+\ell)} Z)}
        	\arrow["(i_{CS^\circ})_*", hook, from=1-2, to=1-4]
        	\arrow[from=1-2, to=3-2]
        	\arrow[from=1-4, to=3-4]
        	\arrow["{(\pi^{k-1}_{\rho-\ell},\ \pr_{\gamma_k})}", from=2-1, to=1-2]
        	\arrow["{\pi^k_{k-1}}"', from=2-1, to=4-1]
        	\arrow["{(\pi^{k-1}_{\rho-\ell},\ \pr_{\gamma_k})}", from=2-3, to=1-4]
        	\arrow[equal, from=3-2, to=3-4]
        	\arrow["{\pi^{k-1}_{\rho-\ell}}", from=4-1, to=3-2]
        	\arrow[equal, from=4-1, to=4-3]
        	\arrow["{\pi^{k-1}_{\rho-\ell}}"', from=4-3, to=3-4]
            \arrow["i" {xshift=2em}, hook, from=2-1, to=2-3, crossing over]
        	\arrow["{\pi^k_{k-1}}" {yshift=1.5em}, from=2-3, to=4-3, crossing over]
        \end{tikzcd}\]
        such that the left and right squares are Cartesian. In other words, the inclusion
        \[
        i: \pi^m_k(F_{(p+\ell)} \X_m) \hookrightarrow \pi^m_k(F_{(p+\ell)} Z)
        \]
        of vector bundles over $\pi^m_{k-1}(F_{(p+\ell)} \X_m)$ is the pullback of the inclusion $T_{CS^\circ} \hookrightarrow i^*_{CS^\circ}(T_{\CC^n})$ of vector bundles over $CS^\circ$ (where $i_{CS^\circ}: CS^\circ \hookrightarrow \mathbb{C}^n$ is the inclusion) under the map
        \[
        \pi^{k-1}_{\rho-\ell}: \pi^m_{k-1}(F_{(p+\ell)} \X_m) \to \pi^m_{\rho-\ell}(F_{(p+\ell)} \X_m) \cong CS^\circ.
        \]
    \end{enumerate}
\end{lemma}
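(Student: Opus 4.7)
The plan is to unpack the coordinate description of $\X_m$ and $Z$ at the truncation levels $k-1$ and $k$, and to compare it with the affine-bundle structure of Proposition~\ref{prop:filtered-topology}.

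Part (i) reduces to the observation that the variables $\gamma_{k+1}, \ldots, \gamma_m$ are unconstrained in both $F_{(p+\ell)}\X_m$ and $F_{(p+\ell)}Z$. On the $\X_m$-side this follows from Proposition~\ref{prop:filtered-topology}(ii) applied with negative order $\rho-\ell$, since $m - (d-1)(\rho-\ell) + 1 = k+1$. On the $Z$-side it is immediate from the definition $Z = (\pi^m_{k-1})^{-1}(\pi^m_{k-1}(\X_m))$. Consequently both spaces split off a trivial $\CC^{n(m-k)}$-factor via $(\pi^m_k, \pr_{\gamma_{k+1}, \ldots, \gamma_m})$, and the inclusion $i$ preserves this splitting because the additional equations defining $\X_m$ inside $Z$ constrain only the first $k+1$ coordinates. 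This yields the diagram in (i) with the vertical arrows being isomorphisms.

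For part (ii) I would first identify $\pi^m_k(F_{(p+\ell)}Z) \cong \pi^m_{k-1}(F_{(p+\ell)}\X_m) \times \CC^n$, with the $\CC^n$-factor being the free variable $\gamma_k$; using the global trivialization $i^*_{CS^\circ}(T_{\CC^n}) \cong CS^\circ \times \CC^n$, this presents $\pi^m_k(F_{(p+\ell)}Z)$ as the pullback of $i^*_{CS^\circ}(T_{\CC^n})$ along $\pi^{k-1}_{\rho-\ell}$, making the right square Cartesian. Next, Proposition~\ref{prop:filtered-topology}(iv) applied at $a = k$ describes $\pi^m_k(F_{(p+\ell)}\X_m) \to \pi^m_{k-1}(F_{(p+\ell)}\X_m)$ as an affine bundle whose fiber over $\xi$ is, inside $\CC^n$, an affine translate of $T_{\pi^{k-1}_{\rho-\ell}(\xi)}CS^\circ$. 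Choosing a section of this affine bundle to trivialize the translation identifies $\pi^m_k(F_{(p+\ell)}\X_m)$ with $(\pi^{k-1}_{\rho-\ell})^* T_{CS^\circ}$, which makes the left square Cartesian, and under these identifications the inclusion $i$ corresponds fiberwise to $T_{CS^\circ} \hookrightarrow i^*_{CS^\circ}(T_{\CC^n})$.

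The main obstacle is bookkeeping the affine-versus-vector-bundle distinction. Expanding $f(\gamma(t))$ as in the proof of Proposition~\ref{prop:filtered-topology} shows that the equation constraining $\gamma_k$ has the form $Dh(\gamma_{\rho-\ell}) \cdot \gamma_k + c(\xi) = 0$ for a function $c$ depending holomorphically on $\gamma_{\rho-\ell+1}, \ldots, \gamma_{k-1}$, so the fiber is literally a translate of the tangent hyperplane and not the tangent hyperplane itself. The translating section used on the $\X_m$-side must be chosen compatibly with the trivial description of $\pi^m_k(F_{(p+\ell)}Z)$ so that both squares commute simultaneously with the inclusion $i$ on top and with $(i_{CS^\circ})_*$ on the back face; such a section exists because $Dh$ is nowhere zero on $CS^\circ$. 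Once this choice is fixed, the Cartesian property of both squares follows by direct inspection of the explicit coordinate equations.
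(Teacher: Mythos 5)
Your proof is correct and takes essentially the same route as the paper's: part (i) via the coordinate splitting coming from Proposition~\ref{prop:filtered-topology}.\ref{zero-free-variables} and the definition of $Z$, and part (ii) by obtaining the left square from Proposition~\ref{prop:filtered-topology}.\ref{truncation-bundle} and the right square from the triviality of the $\gamma_k$-direction in $Z$. Your explicit handling of the affine-versus-vector-bundle distinction (choosing a translating section compatibly on both sides of the inclusion) is in fact slightly more careful than the paper, which invokes Proposition~\ref{prop:filtered-topology}.\ref{truncation-bundle} directly and leaves that identification implicit.
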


\begin{proof}
    Item (i) is just the statement that
    \[
        (\pi^m_k)^{-1}(\pi^m_k(F_{(r)} Z)) = F_{(r)} Z
        \qquad \text{and} \qquad
        (\pi^m_k)^{-1}(\pi^m_k(F_{(r)} \X_m)) = F_{(r)} \X_m,
    \]
    which follows just like in the proof of Lemma~\ref{lem:Z-filtration-low-degrees}.
    Now consider the diagram from item (ii). The bottom arrow in the front square is just the inclusion, which is an equality by definition of $Z$. Hence it is clear that the front square commutes. The bottom square is just stating 
    \[
    \pi^m_{\rho-\ell}(F_{(p+\ell)} Z) = \pi^m_{\rho-\ell}(F_{(p+\ell)} \X_m) = CS^\circ,
    \]
    which follows by definition of $Z$ and Proposition~\ref{prop:filtered-topology}.\ref{lowest-order}, noting that $p+\ell > p \geq -m/d$ because $F_{(p)} \X_m$ is nonempty.
    The left square is Cartesian by Proposition~\ref{prop:filtered-topology}.\ref{truncation-bundle}. 
    The right square is Cartesian because $i^*_{CS^\circ}(T\CC^n) \to CS^\circ$ is the trivial bundle and
    \[
    \pi^m_k(F_{(p+\ell)} Z) = (\pi^k_{k-1})^{-1}(\pi^m_{k-1}(F_{(p+\ell)} Z))
    \]
    by definition of $Z$. 
    The back square is the usual inclusion of vector bundles given by the differential of $i_{CS^\circ}$.
    The top square clearly commutes.
\end{proof}

We need the following result about the cohomology of vector bundles.

\begin{lemma} \label{lem:pullback-exact-sequence}
    Consider a short exact sequence
    \[\begin{tikzcd}[row sep=scriptsize]
        0 & {V'} & V & {V''} & 0 \\
        && B
        \arrow[from=1-1, to=1-2]
        \arrow["i", from=1-2, to=1-3]
        \arrow["{\xi'}"', from=1-2, to=2-3]
        \arrow["q", from=1-3, to=1-4]
        \arrow["\xi"', from=1-3, to=2-3]
        \arrow[from=1-4, to=1-5]
        \arrow["{\xi''}", from=1-4, to=2-3]
    \end{tikzcd}\]
    of orientable vector bundles over paracompact space $B$, of ranks $n', n$ and $n''$, respectively. Let $e(\zeta), \tau(\zeta)$ denote the Euler class and the Thom class of the vector bundle $\zeta$. Then the following diagram commutes  
    \begin{equation} \label{eq:char-class-diagram}
    \begin{tikzcd}[column sep=6em]
    	{H_c^{k-n}(B)} & {H_c^{k}(V)} \\
    	{H_c^{k-n}(B)} & {H_c^{k}(V')}
    	\arrow["{\xi^*(-) \smile \tau(\xi)}", from=1-1, to=1-2]
    	\arrow["{- \smile e(\xi'')}"', from=1-1, to=2-1]
    	\arrow["{i^*}", from=1-2, to=2-2]
    	\arrow["{(\xi')^*(-) \smile \tau(\xi')}", from=2-1, to=2-2]
    \end{tikzcd}
    \end{equation}
\end{lemma}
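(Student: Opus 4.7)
The plan is to reduce the commutativity of the square to a single identity in $H^n_c(V')$, namely
\[
    i^*\tau(\xi) \;=\; \tau(\xi') \smile (\xi')^*\bigl(e(\xi'')\bigr).
\]
Once this identity is available, the diagram chase will be immediate: for $x$ in the top-left group, going right then down yields $i^*\bigl(\xi^*(x) \smile \tau(\xi)\bigr) = (\xi')^*(x) \smile i^*\tau(\xi)$ by functoriality of pullback, and then the key identity together with associativity and graded commutativity of $\smile$ rearranges this into $(\xi')^*\bigl(x \smile e(\xi'')\bigr) \smile \tau(\xi')$, which is precisely the down-then-right composition.

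To establish the key identity, the first step will be to produce a \emph{global} splitting of the short exact sequence. Since $B$ is paracompact, $V$ admits a metric, and the orthogonal complement $(V')^\perp \subset V$ is canonically isomorphic via $q: V \to V''$ to $V''$; this gives $V \cong V' \oplus V''$ together with bundle projections $p': V \to V'$ and $p'': V \to V''$ satisfying $p' \circ i = \id_{V'}$ and $p'' \circ i = z \circ \xi'$, where $z: B \to V''$ is the zero section. The second step will be to invoke the multiplicativity of the Thom class under Whitney sums,
\[
    \tau(\xi) \;=\; p'^*\tau(\xi') \smile p''^*\tau(\xi''),
\]
and then pull back along $i$. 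Using the two relations above together with the defining property $e(\xi'') = z^*\tau(\xi'')$ of the Euler class, one immediately recovers the key identity.

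The main obstacle is modest: it lies in bookkeeping the graded-commutativity signs and checking that the chosen conventions for the Thom class of a Whitney sum match the conventions implicit in the statement. Both inputs (multiplicativity of Thom classes under Whitney sums, and the zero-section interpretation of the Euler class) are classical for oriented vector bundles over paracompact bases, so no new argument is needed; the paracompactness hypothesis is used exactly once, in producing the global metric splitting.
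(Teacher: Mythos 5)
Your proposal is correct and follows essentially the same route as the paper: both use paracompactness to split the sequence metrically, express $\tau(\xi)$ multiplicatively in terms of $\tau(\xi')$ and the complementary factor, and restrict along $i$ (the zero section of the complement) to produce $(\xi')^*e(\xi'')$. The only cosmetic difference is that you phrase the multiplicativity as a Whitney-sum identity while the paper phrases it as the Thom class of the composite bundle $V \to V' \to B$; these are equivalent, and (as you note) the sign issues are vacuous here since the bundles in the application are complex, hence of even real rank.
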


\begin{proof}
    Let $p: V \to V'$ be a splitting of the short exact sequence, i.e. $p \circ i = \id_{V'}$, which exists because $B$ admits a Euclidean metric, see \cite[Prob. 2-C]{milnor1974}.
    Note that $\xi = \xi'' \circ p$ is a composition of vector bundles, so by uniqueness of the Thom class \cite[Thm. 10.4]{milnor1974} we have
    \begin{equation} \label{eq:thom-composition}
        \tau(\xi) = \tau(p) \smile p^* \tau(\xi').
    \end{equation}
    Then, for any $\alpha \in H^{k-n}_c(B)$ we have
    \begin{align*}
        i^*(\xi^* \alpha \smile \tau(\xi)) &= i^*(\xi^* \alpha \smile \tau(p) \smile p^* \tau(\xi')) && \text{by \eqref{eq:thom-composition},} \\
        &= (\xi')^* \alpha \smile i^* \tau(p) \smile \tau(\xi') && \text{because $i^*$ distributes over $\smile$,} \\
        &= (\xi')^* \alpha \smile e(p) \smile \tau(\xi') && \text{by definition of the Euler class,} \\
        &= (\xi')^* \alpha \smile (\xi')^* e(\xi'') \smile \tau(\xi') && \text{by functoriality of the Euler class,} \\
        &= (\xi')^* (\alpha \smile e(\xi'')) \smile \tau(\xi') && \text{because $(\xi')^*$ distributes over $\smile$.}
    \end{align*}
\end{proof}

We are finally ready to prove our main result.

\begin{theorem} \label{thm:degeneration-clss}
    If $d \geq 2$, the spectral sequence \eqref{eq:clss} degenerates at the first page. 
\end{theorem}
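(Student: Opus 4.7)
The plan is to show that each differential $d_\ell^{p,q}$ vanishes by exploiting the naturality of the spectral sequence with respect to a closed inclusion $i \colon \X_m \hookrightarrow Z$ with a carefully chosen auxiliary space $Z$. Fix $\ell \geq 1$ and $p, q \in \ZZ$, and assume both $F_{(p)} \X_m$ and $F_{(p+\ell)} \X_m$ are nonempty (otherwise $d_\ell^{p,q}$ is trivially zero). Let $Z$ be the auxiliary space defined in~\eqref{eq:X-definition} associated to this data. By Lemma~\ref{lem:Z-filtration-low-degrees}, $i$ induces $F_{(r)} Z = F_{(r)} \X_m$ for every $r < p + \ell$, so that the induced map of spectral sequences $i^* \colon E_r(Z) \to E_r(\X_m)$ is an isomorphism on every entry whose computation only involves columns strictly below $p + \ell$; in particular it is an isomorphism on $E_\ell^{p, q}$.

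The crux of the proof is to show that $i^* \colon H^*_c(F_{(p+\ell)} Z) \to H^*_c(F_{(p+\ell)} \X_m)$ is the zero map. Using item (i) of Lemma~\ref{lem:Z-p-ell-bundle-inclusion} together with Künneth, this reduces to the analogous statement for the inclusion $\pi^m_k(F_{(p+\ell)} \X_m) \hookrightarrow \pi^m_k(F_{(p+\ell)} Z)$. Item (ii) of the same lemma identifies this as a subbundle inclusion over $\pi^m_{k-1}(F_{(p+\ell)} \X_m)$ that is the pullback, along $\pi^{k-1}_{\rho - \ell}$, of the inclusion $T_{CS^\circ} \hookrightarrow i_{CS^\circ}^*(T_{\CC^n})$, whose quotient is the normal bundle $N_{CS^\circ / \CC^n}$. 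By Lemma~\ref{lem:pullback-exact-sequence} combined with the Thom isomorphism, the restriction $i^*$ therefore factors through cup product with the pulled-back Euler class $(\pi^{k-1}_{\rho - \ell})^* e(N_{CS^\circ / \CC^n})$.

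It remains to verify that $e(N_{CS^\circ / \CC^n}) = 0 \in H^2(CS^\circ)$. The conormal bundle $N^\vee$ is trivialized by the globally defined holomorphic $1$-form $dh$: its restriction to $T_{CS^\circ}$ vanishes because $h \equiv 0$ on $CS^\circ$, and it is pointwise nonzero on $CS^\circ$ because $0$ is the only critical point of $h$. Hence $N_{CS^\circ / \CC^n}$ is trivial, its Euler class vanishes, and we conclude that $i^* = 0$ on $H^*_c(F_{(p+\ell)} Z)$, and \emph{a fortiori} on the $E_1$ page in column $p + \ell$.

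To finish, the vanishing of $i^*$ on $E_1^{p+\ell, *}$ descends automatically to the subquotient $E_\ell^{p+\ell, q-\ell+1}$, and naturality of the spectral sequence gives
\[
d_\ell^{p, q}(\X_m) \circ i^*\big|_{E_\ell^{p, q}} \;=\; i^*\big|_{E_\ell^{p+\ell, q-\ell+1}} \circ d_\ell^{p, q}(Z) \;=\; 0.
\]
Since $i^*|_{E_\ell^{p, q}}$ is an isomorphism, we conclude $d_\ell^{p, q}(\X_m) = 0$. The main obstacle is not any single calculation but rather the bookkeeping that simultaneously makes $i^*$ an isomorphism on the source column and zero on the target column; both properties ultimately rest on the asymmetric equality $F_{(r)} Z = F_{(r)} \X_m$ for $r < p + \ell$, which is exactly why the auxiliary space $Z$ must be tailored to the differential one is trying to kill.
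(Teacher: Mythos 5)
Your proof is correct and follows the same architecture as the paper's: the same auxiliary space $Z$ from \eqref{eq:X-definition}, the same reduction via Lemmas~\ref{lem:Z-filtration-low-degrees}, \ref{lem:Z-p-ell-bundle-inclusion} and \ref{lem:pullback-exact-sequence} to the vanishing of the Euler class of the normal bundle $N_{CS^\circ/\CC^n}$, and the same naturality argument (iso on the source column, zero on the target column). Two local variations are worth recording. First, you dispense with the paper's induction on the page number $\ell$: rather than assuming all earlier differentials vanish so that $E_\ell^{p,q}=E_1^{p,q}$, you observe that $E_\ell^{p,q}$ is computed entirely from the filtration in columns $<p+\ell$, where $F_\bullet Z$ and $F_\bullet\X_m$ coincide, so $i^*$ is an isomorphism on that entry in any case; this is a valid and slightly cleaner bookkeeping. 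Second, for $e(N_{CS^\circ/\CC^n})=0$ the paper identifies the normal bundle as the pullback of $N_{S/\mathbb{P}^{n-1}}\cong\mathscr{O}_S(d)$ along the cone projection $CS^\circ\to S$ and then uses the Thom--Gysin sequence of the tautological bundle, whereas you trivialize the conormal bundle directly by the section $dh$, which is nowhere zero on $CS^\circ$ because the homogeneous polynomial $h$ has $0$ as its only critical point. Your argument is more elementary and proves the stronger statement that the normal bundle is trivial (the normal bundle of a principal divisor along its smooth locus); either route suffices for the theorem.
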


\begin{proof}
    Fix an integer $\ell \geq 1$ and suppose that we have already proved that the differentials $d_{\ell'}$ are zero for all $1 \leq \ell' < \ell$. We need to show that the differentials
    \[
    \phantom{}_{F_\bullet \X_m} d^{p,q}_\ell: \phantom{}_{F_\bullet \X_m}E^{p,q}_\ell \to \phantom{}_{F_\bullet \X_m}E^{p+\ell, q-\ell+1}_\ell
    \]
    are zero for all $p, q \in \ZZ$. Since all differentials in the previous pages were zero, we have
    \begin{equation} \label{eq:page-ell}
        \phantom{}_{F_\bullet \X_m}E^{p,q}_\ell = \phantom{}_{F_\bullet \X_m}E^{p,q}_1 = H^{p+q}_c(F_{(p)} \X_m) \quad \text{ for all }p, q \in \ZZ.
    \end{equation}
    If either $p \notin [-m/d, -1]$ or $p + \ell \notin [-m/d, -1]$, then $F_{(p)} \X_m = \varnothing$ or $F_{(p+\ell)} \X_m = \varnothing$ by Proposition \ref{prop:filtered-topology}.\ref{topology-empty}, and therefore either the domain or the codomain of $d^{p,q}_\ell$ is zero, so $d^{p,q}_\ell = 0$. 
    Hence we may assume that both $p$ and $p + \ell$ lie in the interval $[-m/d, -1]$.

    Let $k \coloneqq m - (d-1)(\rho - \ell)$ and consider the auxiliary space $Z = (\pi^m_{k-1})^{-1}(\pi^m_{k-1}(\X_m))$ introduced in \eqref{eq:X-definition}, which has an induced filtration $F_\bullet Z$. 
    By functoriality of the spectral sequence associated to a filtration (see \cite[\S 3-\S 4]{arapura2005}, \cite[\href{https://stacks.math.columbia.edu/tag/012O}{Tag 012O}]{stacks-project}), there is a map of spectral sequences $\phantom{}_{F_\bullet Z} E \to \phantom{}_{F_\bullet \X_m} E$ induced by the closed inclusion $i: \X_m \hookrightarrow Z$. 
    As part of the map between $\ell$-th pages, there is a commutative diagram
    \[\begin{tikzcd}
    	{\phantom{aa} H^{p+q}_c(F_{(p)} Z) = \phantom{}_{F_\bullet Z} E_\ell^{p,q}} && {\phantom{}_{F_\bullet Z} E_\ell^{p+\ell,q-\ell+1} \phantom{= H^{p+q+1}_c(F_{(p+\ell)} \X_m)}} \\
    	{H^{p+q}_c(F_{(p)} \X_m) = \phantom{}_{F_\bullet \X_m} E_\ell^{p,q}} && {\phantom{}_{F_\bullet \X_m} E_\ell^{p+\ell,q-\ell+1} = H^{p+q+1}_c(F_{(p+\ell)} \X_m)}
    	\arrow["{\phantom{}_{F_\bullet Z} d_\ell^{p,q}}", from=1-1, to=1-3]
    	\arrow["{i^*}"', from=1-1, to=2-1, shift left=4em]
    	\arrow["{i^*}", from=1-3, to=2-3, shift right=5em]
    	\arrow["{\phantom{}_{F_\bullet \X_m} d_\ell^{p,q}}", from=2-1, to=2-3]
    \end{tikzcd}\]
    where we have used \eqref{eq:page-ell} for the equalities in the bottom row.
    The equality in the top row follows because the differentials involving $\phantom{}_{F_\bullet Z} E_{\ell'}^{p,q}$ for $1 \leq \ell' < \ell$ are precisely the same as those in the spectral sequence $\phantom{}_{F_\bullet \X_m} E$ by Lemma~\ref{lem:Z-filtration-low-degrees}, and hence zero by the induction hypothesis.

    Futhermore, since $F_{(p)} Z = F_{(p)} \X_m$ by Lemma~\ref{lem:Z-filtration-low-degrees}, the left vertical arrow is the identity map. 
    Therefore, to show $\phantom{}_{F_\bullet \X_m} d_\ell^{p,q}$ is zero it is enough to prove that the right vertical arrow is zero.
    Recall that $\phantom{}_{F_\bullet Z} E_\ell^{p+\ell,q-\ell+1}$ is a subquotient of $\phantom{}_{F_\bullet Z} E_1^{p+\ell,q-\ell+1} = H^{p+q+1}_c(F_{(p+\ell)} \X_m)$, and the vertical arrow is simply the map induced by the pullback
    \begin{equation} \label{eq:pullback-Z-Xm}
        i^*: H^{p+q+1}_c(F_{(p+\ell)} Z) \to H^{p+q+1}_c(F_{(p+\ell)} \X_m)
    \end{equation}
    on said subquotient.
    Thus, if we show that the map \eqref{eq:pullback-Z-Xm} is zero then we can conclude that $\phantom{}_{F_\bullet \X_m} d_\ell^{p,q} = 0$, which finishes the induction and the proof.

    By Lemma~\ref{lem:Z-p-ell-bundle-inclusion}.\ref{inclusion-comes-from-below}, it suffices to show that the pullback map in cohomology with compact support induced by the closed inclusion
    \begin{equation} \label{eq:pullback-below}
        i: \pi^m_k(F_{(p+\ell)} \X_m) \hookrightarrow \pi^m_k(F_{(p+\ell)} Z)
    \end{equation}
    is zero. 
    Lemma~\ref{lem:Z-p-ell-bundle-inclusion}.\ref{below-is-pullback} says that \eqref{eq:pullback-below} is an inclusion of vector bundles, hence we may consider the quotient bundle $q: Q \to \pi_{k-1}(F_{(p+\ell)})$, which fits in a short exact sequence
    \[
    0 \to \pi^m_k(F_{(p+\ell)} \X_m) \xrightarrow{i} \pi^m_k(F_{(p+\ell)} Z) \to Q \to 0.
    \]
    By Lemma~\ref{lem:pullback-exact-sequence} we just need to show that the Euler class $e(q)$ is zero (because the horizontal arrows in \eqref{eq:char-class-diagram} are isomorphisms).
    On the other hand, Lemma~\ref{lem:Z-p-ell-bundle-inclusion}.\ref{below-is-pullback} also says that \eqref{eq:pullback-below} is the pullback of the inclusion of vector bundles $T_{CS^\circ} \hookrightarrow i_{CS^\circ}^*(T_{\CC^n})$ under $\pi^{k-1}_{\rho-\ell}$, and in this case we have the short exact sequence
    \[
    0 \to T_{CS^\circ} \to i_{CS^\circ}^*(T_{\CC^n}) \to N_{CS^\circ/\CC^n} \to 0,
    \]
    where $\nu: N_{CS^\circ/\CC^n} \to CS^\circ$ is the normal bundle.
    Therefore, by exactness of the pullback, we know that $q \cong (\pi^{k-1}_{\rho-\ell})^*(\nu)$ and hence $e(q) = (\pi^{k-1}_{\rho-\ell})^*(e(\nu))$. We are going to show that $e(\nu) = 0$.

    A direct application of the nine lemma shows that $\nu$ is the pullback of the normal bundle $N_{S/\mathbb{P}^{n-1}} \to S$ under the natural projection map $\pi: CS^\circ \to S$. 
    Recall that $\pi: CS^\circ \to S$ is the complement of the zero section of the tautological line bundle $\mathcal{O}_S(-1)$, so the Thom-Gysin sequence gives $\pi^*(e(\mathcal{O}_S(-1))) = 0$.
    On the other hand, we know $N_{S/\mathbb{P}^{n-1}} \cong \mathcal{O}_S(d)$ \cite[21.2.J]{vakil}.
    Hence,
    \[
    e(\nu) = \pi^*(e(N_{S/\mathbb{P}^{n-1}})) = \pi^*(e(\mathcal{O}_S(d))) = (-d) \pi^*(e(\mathcal{O}_S(-1))) = 0.
    \]
\end{proof}

\begin{remark}
    Note that it follows from Proposition~\ref{prop:filtered-topology} and Theorem~\ref{thm:degeneration-clss} that both the cohomology groups $H_c^\bullet(\X_m)$ and the classes $[\X_m]$ in the Grothendieck ring of varieties depend only on the homogeneous part $h$ of the semihomogeneous series $f = h + F$, and not on the higher order terms $F$.
\end{remark}

\section{The arc-Floer conjecture}
\label{sec:conclusions}

We are still considering a semihomogeneous germ $f: (\CC^n, 0) \to (\CC,0)$ of degree $d$, see Definition~\ref{def:semihomogeneous}.
As we mentioned previously in (\ref{eq:spectral-sequence-comparison}), the McLean spectral sequence \eqref{eq:mclss} and the spectral sequence associated to the order filtration \eqref{eq:clss} have the same first page up to a relabeling of the columns and a shift. 
Furthermore, in Theorem~\ref{thm:degeneration-mclss} we have shown that McLean's spectral sequence \eqref{eq:mclss} degenerates at the first page under some conditions on the degree $d$ and the number of variables $n$; while in Theorem~\ref{thm:degeneration-clss} we have shown that \eqref{eq:clss} degenerates at the first page unconditionally (and hence so does Budur--Fernández de Bobadilla--L\^{e}--Nguyen's spectral sequence).

While this allows us to conclude that the $E_\infty$ pages of both spectral sequences are isomorphic (under the conditions on $d$ and $n$ and accounting for the shift as in \eqref{eq:spectral-sequence-comparison}), it is not enough to conclude that the groups to which they converge are isomorphic. 
This is a general fact about spectral sequences: a filtered complex $F^\bullet C^\bullet$ induces a filtration in cohomology $F^\bullet H^\bullet$, and (under mild finiteness assumptions that hold in our setting, see \cite[\href{https://stacks.math.columbia.edu/tag/012W}{Tag 012W}]{stacks-project}) the graded pieces are given by the $E_\infty$ page of the associated spectral sequence
\[
F^{(p)} H^{p+q} \cong E_\infty^{p,q}.
\]
Hence, 
reconstructing $H^\bullet$ from the $E_\infty$ page requires understanding some extensions.
Since we are working with integer coefficients, and by Corollary~\ref{cor:homology-computation} there are non-free terms in the $E_1$ page, these extensions might be nontrivial.
Therefore, knowing that the $E_\infty$ pages coincide only gives us a partial result.

\begin{proposition}
    Let $n \geq 3, d \geq 2, m \geq 1$ be integers and suppose that condition \eqref{eq:condition-degeneration} is satisfied.
    Let $f: (\CC^n, 0) \to (\CC,0)$ be an homogeneous polynomial of degree $d$ with an isolated singularity at the origin.
    Then we have isomorphisms between the associated graded groups
    \[
        \mathrm{gr}_F H_c^{\bullet + (n-1)(2m+1)}(\X_m) \cong
        \mathrm{gr}_A \HF^\bullet(\phi^m, +)
    \]
    where $F$ is induced by the order filtration \eqref{eq:order-filtration} on the $m$-contact locus $\X_m$, and $A$ is induced by the action filtration on the Floer complex of the $m$-th iterate of the monodromy $\phi^m$ \cite[(HF3)]{mclean}.
\end{proposition}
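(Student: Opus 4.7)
The plan is to combine the two degeneration results already established with the comparison of $E_1$ pages from \eqref{eq:spectral-sequence-comparison}. Since both filtrations are bounded in each bidegree, the general machinery of spectral sequences (see \cite[\href{https://stacks.math.columbia.edu/tag/012W}{Tag 012W}]{stacks-project}) identifies the $E_\infty$-page with the associated graded of the induced filtration on the limit. Thus the entire content of the proposition reduces to showing that the two $E_\infty$-pages are isomorphic (up to the prescribed shift), and the extension problem that would otherwise obstruct the full arc-Floer conjecture does not enter the picture at the level of associated graded groups.

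Concretely, the first step is to invoke Theorem~\ref{thm:degeneration-mclss} under the hypothesis \eqref{eq:condition-degeneration} to conclude that the McLean spectral sequence degenerates, yielding
\[
\mathrm{gr}_A^p \HF^{p+q}(\phi^m, +) \cong \phantom{}_{\text{McLean}} E_\infty^{p,q} = \phantom{}_{\text{McLean}} E_1^{p,q}.
\]
The second step is to invoke Theorem~\ref{thm:degeneration-clss} (which applies because $d \geq 2$) to conclude that the order spectral sequence \eqref{eq:clss} degenerates, yielding
\[
\mathrm{gr}_F^p H_c^{p+q}(\X_m) \cong \phantom{}_{F_\bullet \X_m} E_\infty^{p,q} = \phantom{}_{F_\bullet \X_m} E_1^{p,q}.
\]
The third step is to apply the comparison \eqref{eq:spectral-sequence-comparison}, which gives an isomorphism between the two $E_1$-pages after the reindexing $p \mapsto \iota(p)$ of Proposition~\ref{prop:filtrations-equiv} and a cohomological shift of $(n-1)(2m+1)$. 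Assembling these three ingredients yields the stated isomorphism of associated graded groups.

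The only obstacle, which is bookkeeping rather than a genuine mathematical difficulty, is to verify that the reindexing $\iota$ and the degree shift $(n-1)(2m+1)$ in \eqref{eq:spectral-sequence-comparison} intertwine correctly with the definitions of $\mathrm{gr}_F$ and $\mathrm{gr}_A$, so that the identification of $E_\infty$-pages genuinely descends to an identification of the associated graded pieces of the filtered cohomology groups. Note that this proof deliberately avoids addressing the extension problem: by Corollary~\ref{cor:homology-computation}, the first page may contain torsion (when $n$ is odd and $d>1$), so the reconstruction of $\HF^\bullet(\phi^m,+)$ and $H_c^\bullet(\X_m)$ from their associated graded groups is a nontrivial issue that is not resolved by the present argument, which is exactly why the conclusion is stated only at the level of associated graded groups.
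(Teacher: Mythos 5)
Your proposal is correct and follows essentially the same route as the paper: the paper's proof is a one-line appeal to Theorem~\ref{thm:degeneration-mclss}, Theorem~\ref{thm:degeneration-clss}, and the preceding discussion, which is exactly the combination of the two degeneration results, the $E_1$-page comparison \eqref{eq:spectral-sequence-comparison}, and the standard identification of the $E_\infty$-page with the associated graded of the induced filtration. Your closing remark about deliberately leaving the extension problem unresolved matches the paper's own framing of why the statement is only at the level of associated graded groups.
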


\begin{proof}
    This follows from Theorem~\ref{thm:degeneration-mclss}, Theorem~\ref{thm:degeneration-clss} and the discussion above.
\end{proof}

Nevertheless, we have computed the $E_1$ page (and hence the $E_\infty$ page) explicitly.
Therefore, we can give an explicit condition that guarantees that the filtrations in compactly supported cohomology and Floer cohomology have at most one graded piece, so that no extension problems appear.

\begin{theorem} \label{thm:arc-floer-semihomogeneous}
    Let $n \geq 3, d \geq 2, m \geq 1$ be integers and suppose that condition \eqref{eq:condition-degeneration} is satisfied. Furthermore, suppose that
    \begin{equation} \label{eq:condition-filtration}
        2k(d-n) \notin \{0, \pm (n-2), \pm (n-1)\} \text{ for all } k \in [1, m/d) \cap \ZZ.
    \end{equation}
    Let $f: (\CC^n, 0) \to (\CC,0)$ be an homogeneous polynomial of degree $d$ with an isolated singularity at the origin.
    Then we have isomorphisms
    \[
        H_c^{\bullet + (n-1)(2m+1)}(\X_m) \cong
        \HF^\bullet(\phi^m, +).
    \]
    In particular, if $2 \leq d < n/2$ or $d > 2n - 2$ then conditions \eqref{eq:condition-degeneration} and \eqref{eq:condition-filtration} hold for all $m \geq 1$, and therefore the arc-Floer conjecture holds.
\end{theorem}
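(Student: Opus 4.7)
The plan is to bootstrap from the preceding Proposition. Under hypothesis \eqref{eq:condition-degeneration}, Theorem~\ref{thm:degeneration-mclss} and Theorem~\ref{thm:degeneration-clss} together imply that both sides of \eqref{eq:spectral-sequence-comparison} degenerate at $E_1$, so the preceding Proposition already supplies an isomorphism of associated graded groups. What remains is to promote this graded isomorphism to an isomorphism of the actual cohomology groups. I would do so by showing, under the extra hypothesis \eqref{eq:condition-filtration}, that on each side every cohomological degree sees at most one nonzero graded piece; the induced filtration is then trivial in each degree and the full group coincides with the one (possibly) nonzero $E_\infty$ entry on the relevant antidiagonal.

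To carry this out, I first pin down which bidegrees $(p,q)$ support nonzero entries in the common $E_1 = E_\infty$ page. Combining Theorem~\ref{thm:mclss}, the identity $2m(\nu_i/N_i - 1) = 2i(d-n)$ from \eqref{eq:nu/N-computation} and Corollary~\ref{cor:homology-computation}, the column associated to the $m$-divisor $E_i$ (for $i \in [-\lfloor m/d \rfloor,-1]\cap \ZZ$) has a nonzero entry at row $q$ precisely when
\[
    n-1-(p_i+q)-2i(d-n)\ \in\ \{0,\ n-2,\ n-1,\ 2n-3\},
\]
where $p_i=mb_i/N_i$. Consequently, two distinct columns $i\neq j$ contribute to the same total degree $p+q$ if and only if there exist $k_1,k_2\in\{0,n-2,n-1,2n-3\}$ with
\[
    2(j-i)(d-n)\ =\ k_1-k_2\ \in\ \{0,\ \pm 1,\ \pm(n-2),\ \pm(n-1),\ \pm(2n-3)\}.
\]

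The substantive step is then to rule out each of these values for some $j-i$ with $1\leq|j-i|<m/d$. The value $0$ is directly excluded by hypothesis \eqref{eq:condition-filtration}; the values $\pm 1$ and $\pm(2n-3)$ are odd and cannot equal the even integer $2(j-i)(d-n)$; and the values $\pm(n-1),\pm(n-2)$ are also excluded directly by \eqref{eq:condition-filtration}. This establishes that each antidiagonal carries at most one nonzero $E_\infty$ entry on both sides, so the induced filtrations are trivial and the isomorphism $H_c^{\bullet+(n-1)(2m+1)}(\X_m)\cong\HF^\bullet(\phi^m,+)$ follows. The only real obstacle in this argument is the careful bookkeeping of nonzero entries; no new geometric input is required beyond Corollary~\ref{cor:homology-computation} and the two degeneration theorems.

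For the ``In particular'' claim, condition \eqref{eq:condition-degeneration} was already verified in Theorem~\ref{thm:degeneration-mclss} under the same numerical hypotheses. For condition \eqref{eq:condition-filtration}, observe that for $d<n/2$ one has $|d-n|>n/2$, so $|2k(d-n)|>n>n-1$ for every $k\geq 1$; and for $d>2n-2$ one has $|d-n|>n-2$, so $|2k(d-n)|>2n-4\geq n-1$ for every $k\geq 1$ and every $n\geq 3$. In either range every value in $\{0,\pm(n-2),\pm(n-1)\}$ is automatically avoided, completing the proof.
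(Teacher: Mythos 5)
Your proposal is correct and follows essentially the same route as the paper: both arguments reduce to showing that under \eqref{eq:condition-filtration} each antidiagonal of the common $E_1=E_\infty$ page carries at most one nonzero entry, via the identical computation that two contributing columns would force $2(j-i)(d-n)\in\{0,\pm 1,\pm(n-2),\pm(n-1),\pm(2n-3)\}$, with the odd values excluded by parity and the rest by hypothesis. Your explicit verification of the numerical bounds for the ``in particular'' clause matches what the paper delegates to the proof of Theorem~\ref{thm:degeneration-mclss}.
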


\begin{proof}
    We just need to show that condition \eqref{eq:condition-filtration} implies that for every total degree $r$ there is at most one index $p$ such that $E_1^{p, r-p} \neq 0$.
    As in the proof of Theorem~\ref{thm:degeneration-mclss}, let $E_i,\ i \in \{-\lfloor m/d \rfloor, \ldots, -1\}$ be the exceptional $m$-divisors of the log resolution $\mu: X \to \CC^n$ and let $p_i = mb_i/N_i$.

    Suppose that we have $i < j$ such that $E_1^{p_i, q_i} \neq 0$, $E_1^{p_j, q_j} \neq 0$ and $p_i + q_i = p_j + q_j = r$. By Corollary~\ref{cor:homology-computation}, this implies
    \[
    \begin{cases}
        n-1-(p_i+q_i)-2i(d-n) \in \{ 0, n-2, n-1, 2n-3 \}, \\
        n-1-(p_j+q_j)-2j(d-n) \in \{ 0, n-2, n-1, 2n-3 \}.
    \end{cases}   
    \]
    and taking the difference we see
    \[
        2(j-i)(d-n) \in \{ 0, \pm 1, \pm(n-1), \pm(n-2), \pm(2n-3) \}.
    \]
    Since $2(j-i)(d-n)$ is even, we may ignore the cases $\pm 1$ and $\pm(2n-3)$. The result follows. That the bounds $2 \leq d < n/2$ and $d > 2n-2$ are enough to satisfy condition \eqref{eq:condition-filtration} is seen in exactly the same way as in the proof of Theorem~\ref{thm:degeneration-mclss}.
\end{proof}

\begin{figure}[ht]
    \centering
    \makebox[\textwidth][c]{\resizebox{0.9\linewidth}{!}{
        \includegraphics{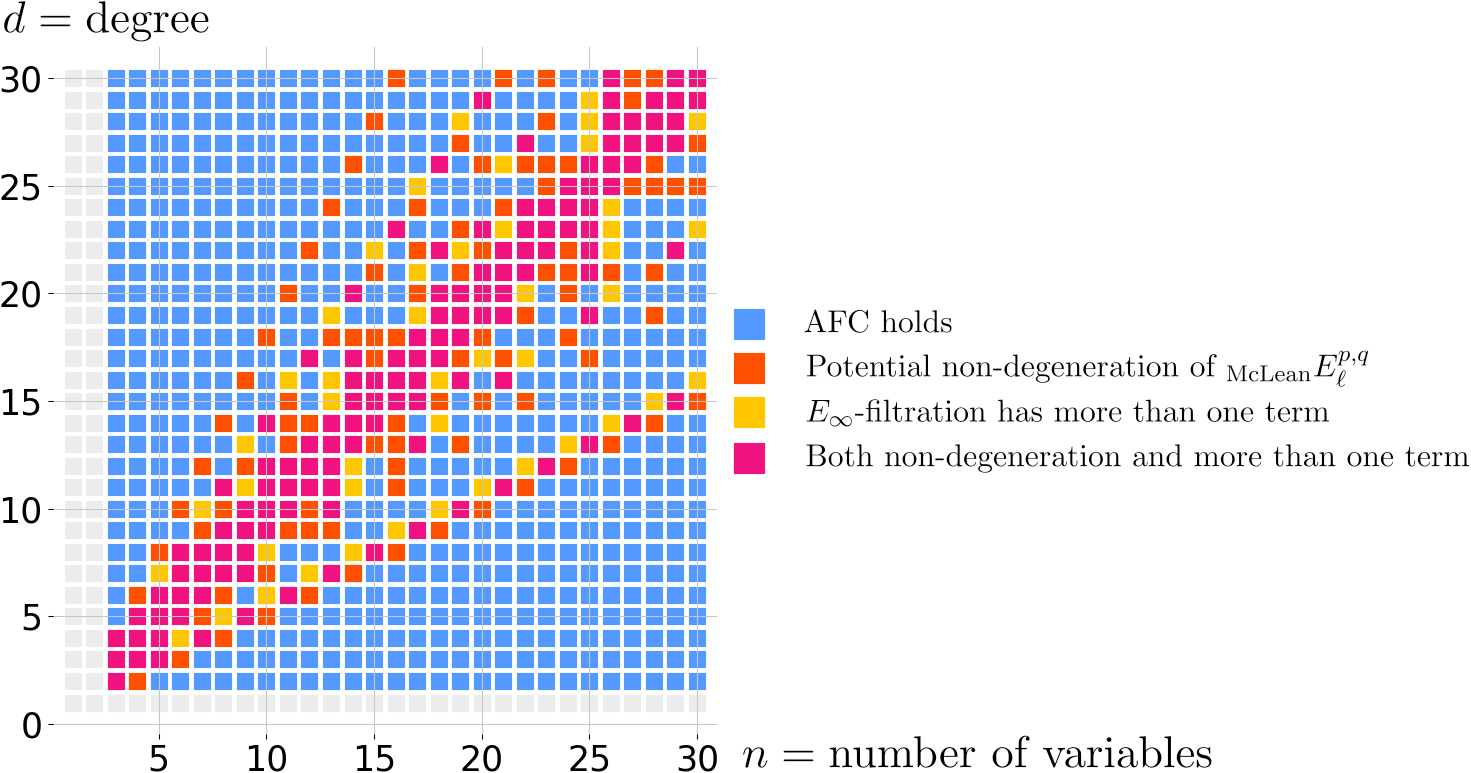}
    }}
    \caption{Plot showing the pairs $(n,d)$ such that there is some $m$ for which condition \eqref{eq:condition-filtration} fails (in orange), condition \eqref{eq:condition-degeneration} fails (in yellow) or both of them fail (in pink). The points in blue are the pairs for which we have shown that the arc-Floer conjecture holds. }
    \label{fig:scatter-plot}
\end{figure}

\section{The embedded Nash problem}
\label{sec:embedded-nash}

\subsection{Preliminaries}

Recall once again that an arc in $\CC^n$ is a morphism $\gamma: \allowbreak \Spec \CC \llbracket t \rrbracket \to \mathbb{A}_\CC^n = \Spec \CC[x_1, \ldots, x_n].$
There is a $\CC$-scheme $\mathcal{L}_\infty \coloneqq \mathcal{L}_\infty(\CC^n)$, called the \emph{arc space} of $\CC^n$, whose closed points are arcs in $\CC^n$. 
It is equipped with canonical \emph{truncation morphisms} $\pi^\infty_m: \mathcal{L}_\infty \to \mathcal{L}_m$ for every $m \in \mathbb{Z}_{\geq 0}$. 
For a polynomial $f \in \mathbb{C}[x_1, \ldots, x_n]$ we define the \emph{unrestricted $m$-contact locus} as
\[
    \X_m^\infty = \{ \gamma \in \mathcal{L}_\infty \mid \gamma(0) = 0,\ \mathrm{ord}_t (f(\gamma(t))) = m \}.
\]

We now give the precise statement of the embedded Nash problem. 
Let $\mu: X \to \CC^n$ be an $m$-separating log resolution of the triple $(\CC^n, f, 0)$ (by definition this is an $m$-separating log resolution of the pair $(\CC^n, f)$ with the additional requirement that $\mu^{-1}(0)$ be a normal crossing divisor), and denote the total transform by $(f \circ \mu)^{-1}(0) = \sum_{E \in \mathcal{E}} N_E E$ as we did in Section \ref{sec:floer}.
For any arc $\gamma \in \X_m^\infty$, the image of its generic point is not contained in $f^{-1}(0)$, and therefore we may lift $\gamma$ to a unique arc in the resolution $\widetilde{\gamma} \in \mathcal{L}_\infty(X)$, meaning $\gamma = \mu \circ \widetilde{\gamma}$.
Note that
\[
    m = \mathrm{ord}_t (f(\gamma(t))) = \mathrm{ord}_t (f(\mu(\widetilde{\gamma}(t)))) = \sum_{E \in \mathcal{E}} N_E \cdot \mathrm{ord}_E(\widetilde{\gamma}).
\]
Since the values $\mathrm{ord}_E(\widetilde{\gamma})$ are a non-negative integers, the $m$-separating condition 
\[
    N_E + N_F \leq m \implies E \cap F = \varnothing
\]
implies that there is a unique $E \in \mathcal{E}$ such that $\mathrm{ord}_E(\widetilde{\gamma}) > 0$, and furthermore $N_E$ divides $m$, i.e. $E$ is an $m$-divisor.
Following \cite{coho-contact}, we define
\[
    \X_{m,E}^\infty \coloneqq \{ \gamma \in \X_m^\infty \mid \mathrm{ord}_E(\widetilde{\gamma}) > 0 \} = \{ \gamma \in \X_m^\infty \mid \widetilde{\gamma}(0) \in E \},
\]
and obtain the following decomposition (compare with \S\ref{subsec:comparing-filtrations})
\begin{equation} \label{eq:XmE-decomposition}
    \X_m^\infty = \bigsqcup_{\substack{E \in \mathcal{E} \\ N_E \mid m}} \X_{m,E}^\infty.
\end{equation}

\begin{proposition}[{\cite[Thm. 2.1]{ein2004}, \cite[Lemma 2.6]{coho-contact}}] \label{prop:codimensions}
    For every \emph{exceptional} $E \in \mathcal{E}$ such that $N_E \mid m$, $\X_{m,E}^\infty$ is irreducible and has codimension $m \nu_E/N_E$ in $\mathcal{L}_\infty(\CC^n)$.
\end{proposition}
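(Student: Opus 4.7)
The plan is to transport the computation to the arc space $\mathcal{L}_\infty(X)$ of the resolution, where both irreducibility and codimension admit transparent local descriptions, and then transfer the codimension back to $\mathcal{L}_\infty(\CC^n)$ via the change-of-variables formula.

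Set $e \coloneqq m/N_E$. The first step is to identify $\X_{m,E}^\infty$ with a locally closed subset of $\mathcal{L}_\infty(X)$. As in the discussion preceding the proposition, the $m$-separating hypothesis forces the unique lift $\widetilde{\gamma}$ of any $\gamma \in \X_{m,E}^\infty$ to satisfy $\mathrm{ord}_E(\widetilde{\gamma}) = e$ and $\mathrm{ord}_F(\widetilde{\gamma}) = 0$ for every $F \neq E$; equivalently, $\widetilde{\gamma}(0) \in E^\circ$. Since $E$ is exceptional we have $E \subset \mu^{-1}(0)$, so the condition $\gamma(0) = 0$ is automatic. Hence the lifting map is a bijection (in fact, an isomorphism of locally closed subschemes)
\[
    \X_{m,E}^\infty \;\longleftrightarrow\; \Y \coloneqq \{\widetilde{\gamma} \in \mathcal{L}_\infty(X) \mid \widetilde{\gamma}(0) \in E^\circ,\ \mathrm{ord}_E(\widetilde{\gamma}) = e\}.
\]

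Next I would analyze $\Y$ using local coordinates. Cover $E^\circ$ by affine opens $U \subset X$ on which $E$ is cut out by a single regular function $v$, and complete $v$ to an étale local coordinate system $(v, w_2, \ldots, w_n)$. Over such a $U$, an element of $\Y$ is given by a power series expansion $v(t) = v_e t^e + v_{e+1} t^{e+1} + \cdots$ with $v_e \in \CC^\times$, together with arbitrary $w_i(t) \in \CC \llbracket t \rrbracket$ whose constant terms describe a point in $E^\circ \cap U$. Thus $\Y$ is a Zariski-locally trivial fibration over $E^\circ$ with irreducible fibre $\CC^\times \times \AA^{\infty}$. Since $E$ is a smooth irreducible divisor (as a component of a simple normal crossings divisor), $E^\circ$ is irreducible, so $\Y$ is irreducible, and therefore so is $\X_{m,E}^\infty$.

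Finally, the codimension. The same local model shows that $\{\widetilde{\gamma} \in \mathcal{L}_\infty(X) : \mathrm{ord}_E(\widetilde{\gamma}) \geq e\}$ is cut out by the vanishing of $e$ independent coefficient functions, so it has codimension $e$ in $\mathcal{L}_\infty(X)$, and $\Y$ is an open substratum of the same codimension. To pass to $\mathcal{L}_\infty(\CC^n)$, I would use the change-of-variables formula: the Jacobian ideal of $\mu$ has order $\mathrm{ord}_E(K_{X/\CC^n}) = \nu_E - 1$ along $E$, so along $\Y$ the induced morphism $\mu_\infty$ is injective and contracts a fibrewise piece of codimension $e(\nu_E - 1)$. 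Consequently
\[
    \mathrm{codim}_{\mathcal{L}_\infty(\CC^n)}(\X_{m,E}^\infty) \;=\; \mathrm{codim}_{\mathcal{L}_\infty(X)}(\Y) + e(\nu_E - 1) \;=\; e \nu_E \;=\; \frac{m \nu_E}{N_E}.
\]
The main obstacle is making this codimension arithmetic rigorous in the infinite-dimensional arc scheme; the clean way is to work at the level of the truncations $\pi^\infty_{m'}(\Y) \subset \mathcal{L}_{m'}(X)$ for $m' \gg m$, show that $\mu_{m'}$ restricts to a piecewise-trivial fibration with affine-space fibres of the expected dimension, and then descend to $\mathcal{L}_\infty$. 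These stabilization results are standard and are the content of \cite[\S 2]{ein2004} (cf.\ also \cite[Lemma 2.6]{coho-contact} in the form we need here).
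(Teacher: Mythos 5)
Your argument is correct: the paper itself offers no proof of this proposition, citing it directly from \cite{ein2004} and \cite{coho-contact}, and your sketch is an accurate reconstruction of the standard argument in those references (lift to the resolution using the $m$-separating condition, observe that the contact stratum in $\mathcal{L}_\infty(X)$ fibers over the irreducible base $E^\circ$ with codimension $e = m/N_E$, then add $e(\nu_E-1)$ via the Denef--Loeser change-of-variables fibration). The stabilization issues you flag at the end are indeed the only technical content left, and they are exactly what the cited lemmas supply.
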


Hence, for every irreducible component $C$ of $\X_m^\infty$, there is a unique exceptional $E \in \mathcal{E}$ such that $C = \overline{\X_{m,E}^\infty}$. 
Of course, the converse is not true: there are divisors $F \in \mathcal{E}$ for which there exists another exceptional divisor $F \neq E \in \mathcal{E}$ such that $\X_{m,F}^\infty \subset \overline{\X_{m,E}^\infty}$. The embedded nash problem is the following:
\begin{equation} \tag{ENP} \label{eq:embedded-nash-problem}
    \text{describe the $E \in \mathcal{E}$ such that $\overline{\X_{m,E}^\infty}$ is an irreducible component of $\X_m^\infty$}
\end{equation}

In analogy to de Fernex and Docampo's solution to the classical Nash problem \cite{defernex2016}, the authors of \cite{irred-curve} gave a result that gives ``bounds'' for the components of the contact locus. 
Before stating it, we need to introduce some definitions.

\begin{definition} \label{def:m-valuations}
    Let $Y$ be a smooth complex algebraic variety, $D$ a nonzero effective divisor on $Y$, and $\Sigma \subset D_\mathrm{red}$ a nonempty closed subset.
    \begin{enumerate}[label=(\roman*)]
        \item An \emph{$m$-valuation} of $(Y, D, \Sigma)$ is a divisorial valuation $v$ on $Y$ given by an $m$-divisor on some $m$-separating log resolution of the triple -- that is, $v = \mathrm{ord}_E$ for some prime divisor $E$ on an $m$-separating log resolution $\mu: X \to (Y, D, \Sigma)$ with $N_E \mid m$.
        
        \item Assume $D$ is reduced. A \emph{dlt $m$-valuation} of $(Y, D, \Sigma)$ is an $m$-valuation given by a prime divisor lying over $\Sigma$ in a dlt modification of $(Y, D)$, see \cite[Def. 2.4]{irred-curve}.

        \item A \emph{contact $m$-valuation} of $(Y, D, \Sigma)$ is an $m$-valuation $v = \mathrm{ord}_E$ such that $\overline{\X_{m,E}^\infty}$ is an irreducible component of the contact locus $\X_m^\infty$.
    
        \item An \emph{essential $m$-valuation} of $(Y, D, \Sigma)$ is an $m$-valuation $v$ whose center on every $m$-separating log resolution is a prime divisor lying over $\Sigma$.
    \end{enumerate}
\end{definition}

\begin{proposition}[{\cite[Thm. 1.13]{irred-curve}}] \label{prop:embedded-nash}
    Let $(Y, D, \Sigma)$ be as before, and $m \in \mathbb{Z}_{> 0}$.
    Then:
    \[
        \{ \text{dlt $m$-valuations} \} \subset \{ \text{contact $m$-valuations} \} \subset \{ \text{essential $m$-valuations} \}.
    \]
\end{proposition}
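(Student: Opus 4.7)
The plan is to prove the two inclusions $\{\text{dlt}\} \subset \{\text{contact}\} \subset \{\text{essential}\}$ separately. Both rely on the codimension formula of Proposition~\ref{prop:codimensions} and on the partition \eqref{eq:XmE-decomposition} of the unrestricted contact locus indexed by $m$-divisors of an $m$-separating log resolution.

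For the second inclusion I would argue by contrapositive. Suppose $v = \mathrm{ord}_E$ is an $m$-valuation that is not essential, so on some $m$-separating log resolution $\mu': X' \to Y$ the center $Z$ of $v$ has codimension at least two. Take a common $m$-separating log resolution $\mu'': X'' \to Y$ dominating both $X'$ and a resolution on which $v$ is given by a divisor, and let $E''$ be the prime divisor on $X''$ corresponding to $v$; then the map $X'' \to X'$ sends the generic point of $E''$ to the generic point of $Z$. If $Z$ were contained in the intersection of two distinct divisors $F', G'$ of $X'$, the $m$-separating condition $N_{F'} + N_{G'} > m$ together with the identity $\mathrm{ord}_t f(\gamma) = \sum_F N_F \mathrm{ord}_F(\widetilde{\gamma})$ would force $\mathrm{ord}_t f(\gamma) > m$ for any arc with lift through $Z$, contradicting non-emptiness of $\X_{m,E}^\infty$. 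Hence $Z$ is a positive-codimension subvariety of a unique divisor $F'$ of $X'$, and a direct blow-up computation for smooth centers of codimension $c \geq 2$ inside $F'$ shows that $\nu_{E}/N_{E} = (\nu_{F'} + c-1)/N_{F'} > \nu_{F'}/N_{F'}$, an inequality which is preserved under the further blow-ups needed to reach $E''$. Since $\X_{m,E}^\infty \subseteq \overline{\X_{m,F'}^\infty}$ but $\codim \X_{m,E}^\infty > \codim \X_{m,F'}^\infty$ by Proposition~\ref{prop:codimensions}, the closure $\overline{\X_{m,E}^\infty}$ is not an irreducible component of $\X_m^\infty$, so $v$ is not a contact $m$-valuation.

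For the first inclusion I would adapt the technique of de Fernex--Docampo \cite{defernex2016} for the classical Nash problem to the embedded setting. Given a dlt modification $\pi : (\widetilde{Y}, \widetilde{D}) \to (Y, D)$ and a dlt $m$-valuation $v = \mathrm{ord}_E$, one first produces an $m$-separating log resolution dominating $\widetilde{Y}$ on which $E$ remains a prime divisor. The key property to exploit is a minimality of the ratio $\nu_E/N_E$ for dlt boundary divisors lying over $\Sigma$, which is essentially the log canonical condition at the dlt center. Suppose for contradiction that $\overline{\X_{m,E}^\infty} \subset \overline{\X_{m,F}^\infty}$ for some $m$-divisor $F \neq E$. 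Then Proposition~\ref{prop:codimensions} forces $\nu_F/N_F \leq \nu_E/N_E$, and minimality of dlt discrepancies over $\Sigma$ upgrades this to an equality with $F$ also a dlt divisor over $\Sigma$. A final rigidity statement for dlt modifications (compare \cite[\S 2]{irred-curve}) rules out two distinct dlt divisors with the same $\nu/N$ whose contact loci are nested.

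The main obstacle will be the first inclusion: it requires importing the minimality theorems for dlt modifications from the minimal model program and making a careful comparison between divisors on the dlt modification and those appearing on an $m$-separating log resolution dominating it. The second inclusion, by contrast, is a bookkeeping exercise with codimensions once the $m$-separating decomposition is in hand.
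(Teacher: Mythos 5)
First, be aware that the paper does not prove this proposition at all: it is quoted verbatim from \cite[Thm.~1.13]{irred-curve} and used as a black box, so there is no internal argument to compare yours against. Your sketch of the second inclusion is essentially the standard one and is broadly sound: by $m$-separation the center $Z$ of a valuation that fails to be divisorial on some $m$-separating resolution can meet only one divisor $F'$, whence $N_E = (\ord_E F')\,N_{F'}$ while $\nu_E > (\ord_E F')\,\nu_{F'}$, and the containment $\X_{m,E}^\infty \subseteq \X_{m,F'}^\infty$ together with Proposition~\ref{prop:codimensions} shows $\overline{\X_{m,E}^\infty}$ is not a component. You do, however, omit the other way a valuation can fail to be essential under Definition~\ref{def:m-valuations}(iv), namely that its center on some $m$-separating resolution is a prime divisor that does not lie over $\Sigma$; that case needs a separate (short) argument.

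The genuine gap is in the first inclusion. The ``key property'' you propose to exploit --- minimality of the ratio $\nu_E/N_E$ among dlt boundary divisors lying over $\Sigma$ --- is false, and this very paper furnishes the counterexample: for a semihomogeneous singularity with $d > n$, Proposition~\ref{prop:dlt-m-valuations} shows that \emph{every} $m$-divisor $E_i$ of the resolution $\mu$ gives a dlt $m$-valuation, and by \eqref{eq:N-nu-formulas} their ratios $\nu_i/N_i = 1 + i(d-n)/m$ are pairwise distinct. There is therefore no common minimal ratio, the step ``upgrades this to an equality'' collapses, and the unnamed ``rigidity statement'' you invoke at the end is carrying the entire proof without any justification. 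This inclusion is the hard half of \cite[Thm.~1.13]{irred-curve}: it is the embedded analogue of de Fernex--Docampo's theorem, and its known proof genuinely requires MMP input (realizing the dlt modification as a minimal model over $Y$ of an $m$-separating log resolution and using nefness of $K+\Delta$ over the base to prevent $\X_{m,E}^\infty$ from being absorbed into the closure of another piece), not a discrepancy comparison alone. As written, your argument for $\{\text{dlt}\} \subset \{\text{contact}\}$ does not go through; citing \cite[Thm.~1.13]{irred-curve}, as the paper does, is the honest move here.
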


\subsection{Dlt, contact and essential valuations of semihomogeneous singularities}

Our goal is to describe the three sets of Proposition \ref{prop:embedded-nash} in the case where $Y = \CC^n$, $D = \mathrm{div}(f)$ is a semihomogeneous singularity and $\Sigma = \{ 0 \}$. 
To do so, we will use the $m$-separating resolution $\mu: X \to \CC^n$ that we introduced in \S\ref{subsec:m-sep-resolution}.
The key facts about $\mu$ we need to recall are: that we have constructed it as an explicit sequence of blow-ups \eqref{eq:sequence-blow-ups}; that the dual graph of $\mu$ is a chain; and that if we label its $m$-divisors as $E_{-\lfloor m/d \rfloor}, \ldots, E_{-1}, E_0$, where $E_0$ is the strict transform and the order of the indices coincides with the order of the chain, then we have explicit formulas for the multiplicities and log discrepancies.
Let us denote $\X_{m,i}^\infty \coloneqq \X_{m,E_i}^\infty$.

\begin{proposition} \label{prop:contact-m-valuations}
    The contact $m$-valuations of a semihomogeneous singularity of degree $d$ in $\CC^n$ have the following description in terms of the $m$-separating resolution $\mu: X \to \CC^n$:
    \begin{enumerate}[label=(\roman*)]
        \item If $d < n$, then the only contact $m$-valuation is the one given by the divisor $E_{-1}$ (note that this means there are no $m$-valuations if $m < d$).
        \item If $d \geq n$, then every $m$-divisor on $\mu: X \to \mathbb{C}^n$ gives a contact $m$-valuation.
    \end{enumerate}
\end{proposition}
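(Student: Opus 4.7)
The plan is to combine the codimension formula of Proposition~\ref{prop:codimensions} with the upper semicontinuity of $\ord_t$ on the arc space, and to complement this with an explicit deformation argument in the case $d < n$.

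By the decomposition~\eqref{eq:XmE-decomposition} every irreducible component of $\X_m^\infty$ is a closure $\overline{\X_{m,E}^\infty}$ for some $E \in \mathcal{E}$ with $N_E \mid m$. The strict transform $E_0$ does not contribute, because any arc through the origin based at $E_0$ would be based at $E_0 \cap E_{-1}$, forcing $N_{E_0}\ord_{E_0}\widetilde{\gamma} + N_{E_{-1}}\ord_{E_{-1}}\widetilde{\gamma} \ge 1 + m > m$, contrary to $\ord_t f(\gamma(t)) = m$. Hence the candidates are the $\lfloor m/d \rfloor$ closures $\overline{\X_{m,i}^\infty}$ with $i \in \{-\lfloor m/d \rfloor, \dots, -1\}$, and by Proposition~\ref{prop:codimensions} together with \eqref{eq:nu/N-computation} each is irreducible of codimension $m + i(d-n)$. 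Since arcs in $\X_{m,i}^\infty$ have $\ord_t = -i$ (as shown in the proof of Proposition~\ref{prop:filtrations-equiv}) and $\ord_t$ is upper semicontinuous on $\L_\infty(\CC^n)$, any containment $\X_{m,i}^\infty \subset \overline{\X_{m,j}^\infty}$ forces $i \le j$; and any \emph{proper} closed inclusion $\overline{\X_{m,i}^\infty} \subsetneq \overline{\X_{m,j}^\infty}$ of irreducibles forces a strict drop in dimension, i.e.\ $(i-j)(d-n) > 0$. When $d \ge n$, these constraints are incompatible (equal codimensions for $d = n$; and $i > j$ against $i \le j$ for $d > n$), so no proper inclusion exists and all $\lfloor m/d \rfloor$ closures are components, giving~(ii). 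The same argument also proves that $\overline{\X_{m,-1}^\infty}$ is always a component, since the constraint $i \le j = -1$ admits only $j = -1$.

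To finish~(i) it remains, in the case $d < n$, to show that $\X_{m,j}^\infty \subset \overline{\X_{m,-1}^\infty}$ for every $j \in \{-\lfloor m/d \rfloor, \dots, -2\}$, which rules out $E_j$ as a contact valuation. Given such $\gamma \in \X_{m,j}^\infty$ of order $\rho = -j \ge 2$ with leading coefficient $\gamma_\rho$ (a smooth point of $V(h)$ by Proposition~\ref{prop:filtered-topology}.\ref{lowest-order}), the plan is to construct a wedge $\gamma(s,t) = \gamma(t) + s\,\alpha(t)$ with $\alpha(t) = vt + \sum_{k \ge 2}\alpha_k t^k$ such that the arc $\gamma_s(t) := \gamma(s,t)$ lies in $\X_{m,-1}^\infty$ for all small $s \ne 0$, while $\gamma_s \to \gamma$ in $\L_\infty(\CC^n)$ as $s \to 0$. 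Expanding $f(\gamma(t) + s\alpha(t))$ by Taylor's formula in $s$ and requiring $\mathrm{Coef}(f(\gamma(s,t)), t^k) \equiv 0$ in $\CC\llbracket s \rrbracket$ for each $k < m$ yields a hierarchy of equations for the $\alpha_k$'s. The lowest equation, coming from the $s^1 t^{(d-1)\rho+1}$ coefficient, is $Dh(\gamma_\rho) \cdot v = 0$, which constrains $v$ to the linear hyperplane $T_{\gamma_\rho} CS \subset \CC^n$; intersecting with $CS^\circ$ produces a variety of dimension $\ge n-2 \ge 1$ and hence contains admissible $v$. Each subsequent equation is affine in a new $\alpha_k$ with linear part given by the nonzero functional $Dh(\gamma_\rho)$, so its solution set is an affine hyperplane, and the $\alpha_k$'s can be determined recursively.

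The main technical obstacle is the bookkeeping for this recursion: one must track the contributions of the higher-order terms $\tfrac{1}{\ell!} D^\ell h(\gamma(t)) \cdot \alpha(t)^{\otimes \ell}$ for $\ell \ge 2$, confirm that the freedom in solving the affine equation at each level suffices to absorb the constraints coming from the next, and verify convergence of the resulting $\alpha(t) \in \CC\llbracket t \rrbracket^n$. A dimension count — roughly $(m-1)n$ scalar parameters in the $\alpha_k$'s against at most $d(m-1)$ scalar vanishing conditions — shows that the system admits an $(n-d)(m-1)$-dimensional family of solutions precisely when $n > d$, which is the hypothesis of~(i) and the source of the dichotomy between the two cases in the statement.
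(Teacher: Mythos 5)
Your treatment of part~(ii), and of the fact that $E_{-1}$ always gives a contact valuation, is correct and essentially identical to the paper's: semicontinuity of $\ord_t$ rules out containments $\X^\infty_{m,i}\subset\overline{\X^\infty_{m,j}}$ with $i>j$, and the codimension formula $\operatorname{codim}\X^\infty_{m,i}=m+i(d-n)$ rules out the containments with $i<j$ when $d\ge n$ (and the case $d=n$ is handled by equality of codimensions). Your preliminary remark that the strict transform $E_0$ contributes nothing is also fine.

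The gap is in part~(i). What remains to be shown there is that for $d<n$ and $j<-1$ the closure $\overline{\X^\infty_{m,j}}$ is \emph{not} a component, and your proposed wedge construction does not establish this. You explicitly leave the recursion unverified, and the concluding ``dimension count'' ($(m-1)n$ unknowns versus $d(m-1)$ conditions) is a heuristic, not an argument: an excess of unknowns over equations does not by itself produce solutions of a nonlinear system. Moreover, the bookkeeping is worse than you suggest. Setting $\gamma_s(t)=\gamma(t)+s\alpha(t)$ with $\ord_t\gamma=\rho\ge 2$ and $\alpha(t)=vt+\cdots$, the term $\tfrac{1}{\ell!}D^\ell h(\gamma(t))\cdot(s\alpha(t))^{\otimes\ell}$ contributes starting at order $t^{(d-\ell)\rho+\ell}$, which \emph{decreases} as $\ell$ grows (since $\rho>1$); so the equation you single out as ``lowest'', the coefficient of $s^1t^{(d-1)\rho+1}$, is in fact dominated by the higher powers of $s$ (e.g.\ the coefficient of $t^d$ is $s^dh(v)$, which is where the constraint $v\in CS$ really comes from), and the hierarchy of equations in $(s,t)$-bidegree interleaves in a way your sketch does not control. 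As written, part~(i) is therefore not proved.

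The paper's argument for~(i) sidesteps the deformation entirely and is much shorter: $\X^\infty_m$ is an open subset of the closed subset of $\mathcal{L}_\infty(\CC^n)$ cut out by $n$ equations for $\gamma(0)=0$ together with the $m-d$ equations $\mathrm{Coef}(f(\gamma(t)),t^k)=0$ for $k=d,\dots,m-1$ (the coefficients below $t^d$ vanish automatically once $\gamma(0)=0$). Hence every irreducible component of $\X^\infty_m$ has codimension at most $m-d+n$. For $d<n$ and $j<-1$ one has $\operatorname{codim}\X^\infty_{m,j}=m+j(d-n)>m+n-d$, so $\overline{\X^\infty_{m,j}}$ cannot be a component, and $\overline{\X^\infty_{m,-1}}$ (of codimension exactly $m-d+n$) is the unique one. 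If you want to salvage your approach, note that this Krull-type bound is exactly the tool that replaces your unfinished recursion; the containment $\X^\infty_{m,j}\subset\overline{\X^\infty_{m,-1}}$ then follows a posteriori rather than being constructed by hand.
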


\begin{proof}
    Recall from the proof of Proposition \ref{prop:filtrations-equiv} that $\X_{m,i}^\infty = \{ \gamma \in \X_m^\infty \mid \mathrm{ord}_t \gamma = -i \}.$ By semicontinuity of the order, we conclude that $\overline{\X_{m,i}^\infty} \cap \X_{m,j}^\infty = \varnothing$ if $i < j$.  
    Using \eqref{eq:N-nu-formulas} and \eqref{eq:kappa-r-formula}, the codimension formula of Proposition \ref{prop:codimensions} becomes
    \[
        \mathrm{codim}(\X_{m,i}^\infty) \coloneqq \mathrm{codim}(\X_{m,i}^\infty \hookrightarrow \mathcal{L}_\infty(\mathbb{C}^n)) = m \dfrac{\nu_i}{N_i} = m \dfrac{(m + id) - in}{(m + id) - id} = m + i(d-n).
    \]
    If $d \geq n$, we see that $\mathrm{codim}(\X_{m,i}^\infty) \leq \mathrm{codim}(\X_{m,j}^\infty)$ for every $i < j$, which shows that $\X_{m,i}^\infty \not\subset \overline{\X_{m,j}^\infty}$ for $i < j$. This proves (ii).

    On the other hand, note that $\X_m^\infty$ is defined by $m-d+n$ equations inside $\mathcal{L}_{\infty}(\mathbb{C}^n)$: $n$ equations to impose that $\gamma(0) = 0$ (one for each of the the constant terms of components of the arc $\gamma$) and $m-d$ equations to impose that $\mathrm{ord}_t(f(\gamma(t))) = m$ (one for each of the coefficients of $t^d, \ldots, t^{m-1}$ of $f(\gamma(t))$). 
    Thus any irreducible component of $\X_m^\infty$ has codimension at most $m-d+n$.
    If $d < n$, then for any $j < -1$ we have
    \[
        \mathrm{codim}(\X_{m,j}^\infty) > \mathrm{codim}(\X_{m,-1}^\infty) = m - (d - n)
    \]
    and hence the only irreducible component of $\X_m^\infty$ is $\overline{\X_{m,-1}^\infty}$. This proves (i).
\end{proof}

\begin{remark} 
    The previous proof shows that if $d \geq n$, then $\overline{\X_{m,i}^\infty} \cap \X_{m,j}^\infty = \varnothing$ and $\X_{m,i}^\infty \not\subset \overline{\X_{m,j}^\infty}$ for $i < j$, which implied that each $\overline{\X_{m,i}^\infty}$ is an irreducible component of $\X_m^\infty$.
    Nevertheless, in contrast with what happens for plane curves \cite[Thm. 3.5]{arc-floer-curves}, the irreducible components of $\X_m^\infty$ are \emph{not} disjoint.
    In other words, it may happen that $\X_{m,i}^\infty \cap \overline{\X_{m,j}^\infty} \neq \varnothing$ for $i < j$. This means that the decomposition \eqref{eq:XmE-decomposition} is in general not a stratification.
\end{remark}

\begin{example} \label{ex:counterexample}
    \begin{enumerate}[label=(\roman*), wide]
        \item Consider the polynomial $f = x^4 + y^4 + z^4$ (i.e. $n=3$, $d=4$) and the family of arcs $\gamma_\lambda(t) \coloneqq (t^2, \lambda t, \omega \lambda t)$, where $\omega^4 = -1$.
        We have $\gamma_\lambda \in \X_{8,-1}^\infty$ for all $\lambda \neq 0$, showing that $\gamma_0 \in \X_{8,-2}^\infty \cap \overline{\X_{8,-1}^\infty}$. 
        In particular, $\X_8^\infty(x^4 + y^4 + z^4)$ is connected but it is not irreducible by Proposition~\ref{prop:contact-m-valuations}.

        \item Consider the polynomial $f = x^2 + y^2 + z^2$ (i.e. $n=3$, $d=2$). 
        By Proposition \ref{prop:contact-m-valuations}, there is only one contact $4$-valuation, meaning that $\X_4^\infty$ is irreducible.
        However, given an arc $\gamma(t) = (\sum a_i t^i, \sum b_i t^i, \sum c_i t^i) \in \X_4^\infty$ it is a straightforward computation to show that the Jacobian matrix of the defining equations has maximal rank at $\gamma$ if and only if $(a_1, b_1, c_1) \neq (0,0,0)$, i.e. if an only if $\gamma \notin F_{-2} \X_4^\infty$.
        In particular $\X_4^\infty(x^2 + y^2 + z^2)$ is irreducible but singular.
    \end{enumerate}
\end{example}

Now we move on to studying the dlt $m$-valuations.
We start with a few lemmas that will be useful to compute intersection numbers in the resolution $\mu: X \to \CC^n$.

\begin{lemma} \label{lem:moving-curves}
    Let $W$ be an algebraic variety, $\pi: \mathbb{P}(\mathcal{F}) \to W$ the $\mathbb{P}^1$-bundle associated to a rank 2 vector bundle $\mathcal{F}$, and $\sigma: W \to \mathbb{P}(\mathcal{F})$ a section. Then, for every curve $C \subset \mathbb{P}(\mathcal{F})$, we have an equality of rational cycles
    \[
        [C] = \mathrm{deg}(\pi|_C) \cdot [\sigma(\pi(C))] + \pi^* \alpha_0,
    \]
    for some rational $0$-cycle $\alpha_0 \in A_0(W)$.
\end{lemma}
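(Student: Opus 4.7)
The plan is to apply the projective bundle formula for Chow groups and then read off the coefficients by pushing forward along $\pi$.

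First I would dispose of the degenerate case where $\pi|_C$ is constant. If $\pi(C) = \{w\}$ for some $w \in W$, then the irreducible curve $C$ is contained in the fiber $\pi^{-1}(w) \cong \mathbb{P}^1$, and hence must equal the whole fiber, giving $[C] = \pi^*[w]$. Since $\mathrm{deg}(\pi|_C) = 0$ by convention in this case, the claim holds with $\alpha_0 = [w]$.

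In the remaining case, $\pi|_C$ is generically finite onto a curve $D \coloneqq \pi(C)$, and $d \coloneqq \mathrm{deg}(\pi|_C)$ satisfies $\pi_*[C] = d \cdot [D]$; since $\sigma$ is a closed immersion restricting to an isomorphism onto $\sigma(D)$, we also have $\pi_*[\sigma(D)] = [D]$. By the projective bundle formula \cite[Thm. 3.3]{fulton-intersection} applied to the rank $2$ bundle $\mathcal{F}$, setting $\xi \coloneqq c_1(\mathcal{O}_{\mathbb{P}(\mathcal{F})}(1))$, we have the direct sum decomposition
\[
    A_1(\mathbb{P}(\mathcal{F})) = \pi^* A_0(W) \,\oplus\, \big(\xi \cap \pi^* A_1(W)\big),
\]
so I can write $[C] - d \cdot [\sigma(D)] = \pi^* \alpha_0 + \xi \cap \pi^* \alpha_1$ for uniquely determined classes $\alpha_0 \in A_0(W)$ and $\alpha_1 \in A_1(W)$. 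It then suffices to show that $\alpha_1 = 0$.

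To finish I would apply $\pi_*$ to both sides. The left-hand side pushes to $d[D] - d[D] = 0$ by the computation above. On the right-hand side, $\pi_*(\pi^*\alpha_0) = 0$ for dimension reasons (it is supported on positive-dimensional fibers), while $\pi_*(\xi \cap \pi^* \alpha_1) = \alpha_1$ follows from the standard Segre class identity $\pi_*(\xi^i \cap \pi^* -) = s_{i-1}(\mathcal{F}) \cap -$ with $i=1$ and $s_0(\mathcal{F}) = 1$. Hence $\alpha_1 = 0$, and rearranging yields the desired equality. The only step requiring some care is this last identity, but it is a standard consequence of the projective bundle formula, so I do not anticipate any serious obstacle.
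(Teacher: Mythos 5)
Your proof is correct and follows essentially the same route as the paper: decompose $[C] - \deg(\pi|_C)\cdot[\sigma(\pi(C))]$ via the projective bundle formula \cite[Thm. 3.3]{fulton-intersection} and kill the $\xi$-component by pushing forward along $\pi$. The only differences are cosmetic — you treat the fiber case separately and spell out the Segre-class identity $\pi_*(\xi \frown \pi^*\alpha_1) = \alpha_1$, both of which the paper leaves implicit.
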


\begin{proof}
    By \cite[Thm. 3.3]{fulton-intersection}, any $1$-cycle $\beta \in A_1(\mathbb{P}(\mathcal{F}))$ can be written as
    \[
        \beta = (c_1(\mathscr{O}_{\mathbb{P}(\mathcal{F})}(1)) \frown \pi^* \alpha_1) + \pi^* \alpha_0,
    \]
    for unique $\alpha_1 \in A_1(W)$ and $\alpha_0 \in A_0(W)$. 
    Applying $\pi_*$ to both sides, we find $\pi_* \beta = \alpha_1$.
    If we take $\beta = [C] - \mathrm{deg}(\pi|_C) \cdot [\sigma(\pi(C))]$, we have $\pi_* \beta = 0$ and by uniqueness we get the desired result. 
\end{proof}

\begin{lemma} \label{lem:normal-bundle-strict-transform}
    Let $W$ be a smooth variety, $D$ a smooth divisor on $W$, $Z$ a smooth subvariety contained in the support of $D$, and $\beta: \Bl_Z W \to W$ the blow-up of $W$ at $Z$.
    Let $E$ be the exceptional divisor of $\beta$ and let $\widetilde{D}$ be the strict transform of $D$ under $\beta$. Then,
    \[
        \mathcal{N}_{\widetilde{D}/\Bl_Z W} \cong \beta^*\mathcal{N}_{D/W} \otimes \mathscr{O}_{\widetilde{D}}(-E \cap \widetilde{D}).
    \]
\end{lemma}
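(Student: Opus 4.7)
The plan is to reduce everything to the standard identification of the normal bundle of a Cartier divisor and then compute the total transform $\beta^* D$ explicitly.

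First I would recall that, since $D$ and $\widetilde{D}$ are smooth divisors in smooth ambient varieties, the adjunction-type identification gives $\mathcal{N}_{D/W} \cong \mathscr{O}_W(D)|_D$ and $\mathcal{N}_{\widetilde{D}/\Bl_Z W} \cong \mathscr{O}_{\Bl_Z W}(\widetilde{D})|_{\widetilde{D}}$. Thus the statement reduces to an identity of line bundles on $\widetilde{D}$, namely
\[
\mathscr{O}_{\Bl_Z W}(\widetilde{D})\big|_{\widetilde{D}} \;\cong\; \beta^*\bigl(\mathscr{O}_W(D)|_D\bigr) \otimes \mathscr{O}_{\widetilde{D}}(-E\cap \widetilde{D}).
\]

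The main step is the computation of the total transform $\beta^* D$. The claim is that $\beta^* D = \widetilde{D} + E$ as Cartier divisors on $\Bl_Z W$. This is a local statement and can be checked in charts for the standard blow-up of an affine space along a smooth center: choose local coordinates $(x_1,\ldots,x_n)$ on $W$ so that $D = V(x_1)$ and $Z = V(x_1,f_2,\ldots,f_m)$, where $f_2,\ldots,f_m$ extend $x_1$ to a regular system of parameters along $Z$. In the chart of $\Bl_Z W$ where the $f_2$-direction is untouched, one has $x_1 = f_2 w_1$ and $f_i = f_2 w_i$ for $i\ge 3$, so $E = V(f_2)$ and the pullback of the equation of $D$ factors as $x_1 = f_2 w_1$, giving $\beta^* D = E + V(w_1) = E + \widetilde{D}$ with multiplicity one along $E$. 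In the chart where the $x_1$-direction is affine, $E$ coincides with the pullback of $D$ and no strict transform appears. This is where smoothness of $D$ is essential: it guarantees $D$ meets $Z$ with multiplicity one.

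Granted that $\beta^* D = \widetilde{D} + E$, we obtain $\mathscr{O}_{\Bl_Z W}(\widetilde{D}) \cong \beta^*\mathscr{O}_W(D) \otimes \mathscr{O}_{\Bl_Z W}(-E)$, and restricting to $\widetilde{D}$ yields
\[
\mathcal{N}_{\widetilde{D}/\Bl_Z W} \cong \bigl(\beta^*\mathscr{O}_W(D)\bigr)\big|_{\widetilde{D}} \otimes \mathscr{O}_{\Bl_Z W}(-E)\big|_{\widetilde{D}}.
\]
Since the restriction $\beta|_{\widetilde{D}}\colon \widetilde{D}\to D$ is a well-defined morphism, $\beta^*\mathscr{O}_W(D)|_{\widetilde{D}} = (\beta|_{\widetilde{D}})^*\bigl(\mathscr{O}_W(D)|_D\bigr) = \beta^*\mathcal{N}_{D/W}$, and $\mathscr{O}_{\Bl_Z W}(-E)|_{\widetilde{D}} = \mathscr{O}_{\widetilde{D}}(-E\cap \widetilde{D})$ by definition. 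Combining these identifications gives the desired formula. The only delicate point is step two (that $E$ appears with multiplicity one in $\beta^* D$), but this follows directly from the smoothness of $D$ and the local model of a blow-up, so no serious obstacle is anticipated.
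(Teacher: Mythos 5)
Your proposal is correct and follows essentially the same route as the paper: both reduce to the identity $\beta^*\mathscr{O}_W(D) \cong \mathscr{O}_{\Bl_Z W}(\widetilde{D}+E)$ and then tensor by $\mathscr{O}_{\Bl_Z W}(-E)$ and restrict to $\widetilde{D}$, using $\mathcal{N}_{\widetilde{D}/\Bl_Z W} \cong \mathscr{O}_{\Bl_Z W}(\widetilde{D})|_{\widetilde{D}}$. The only difference is that you verify the multiplicity-one decomposition of the total transform in local coordinates, whereas the paper simply cites it as standard.
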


\begin{proof}
    Note that
    \[
        \beta^* \mathscr{O}_{W}(D) \cong \mathscr{O}_{\Bl_Z W}(\widetilde{D} + E) \cong \mathscr{O}_{\Bl_Z W}(\widetilde{D}) \otimes \mathscr{O}_{\Bl_Z W}(E), 
    \]
    so tensoring by $\mathscr{O}_{\Bl_Z W}(-E)$ and restricting to $\widetilde{D}$ we obtain the desired result, since $\mathcal{N}_{\widetilde{D}/\Bl_Z W} \cong \mathscr{O}_{\Bl_Z W}(\widetilde{D})|_{\widetilde{D}}$, see \cite[21.2.J]{vakil}.
\end{proof}

\begin{lemma} \label{lem:normal-bundle-computation}
    Let $\mu: X \to \mathbb{C}^n$ be the embedded resolution introduced in \S\ref{subsec:m-sep-resolution}, and let $E$ be the divisor associated to the coprime pair $(\kappa, r) \neq (0, 1)$. Let $(\kappa', r')$ and $(\kappa'', r'')$ be the coprime pairs associated to the two divisors whose intersection was blown up to obtain $E$. Assume that $E_{(\kappa', r')}$ is the one closer to $E_{(0,1)}$ so that $E \cong \mathbb{P}(\mathcal{F})$ with $\mathcal{F} = \mathscr{O}_S(\kappa' + r'd) \oplus \mathscr{O}_S(-\kappa'' - r''d)$, see \eqref{eq:normal-bundle-computation}. Let $(\kappa^*, r^*)$ and $(\kappa^{**}, r^{**})$ be the coprime pairs associated to the two divisors adjacent to $E$ in the chain of Figure~\ref{fig:resolution}, with $E_{(\kappa^*, r^*)}$ being the one closer to $E_{(0,1)}$. Let
    \[
        n' \coloneqq \frac{\kappa^* - \kappa'}{\kappa} = \frac{r^* - r'}{r}, \quad
        n'' \coloneqq \frac{\kappa^{**} - \kappa''}{\kappa} = \frac{r^{**} - r''}{r}.
    \]
    Then, the normal bundle of $E$ in $X$ is
    \[
        \mathcal{N}_{E/X} \cong \mathscr{O}_E(-1) \otimes \Big(\mathscr{O}_E(1) \otimes \pi^* \mathscr{O}_S(-\kappa'' - r''d)\Big)^{\otimes -n'} \otimes \Big(\mathscr{O}_E(1) \otimes \pi^* \mathscr{O}_S(\kappa' + r'd)\Big)^{\otimes -n''}.
    \]
\end{lemma}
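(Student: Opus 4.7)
The plan is to induct on the blow-ups in \eqref{eq:sequence-blow-ups} that directly affect the strict transform of $E = E_{(\kappa, r)}$. Right after the blow-up creating $E$ out of $Z = E_{(\kappa', r')} \cap E_{(\kappa'', r'')}$, the standard description of the exceptional divisor of a blow-up gives $\mathcal{N}_{E/X^{(i+1)}} \cong \mathcal{O}_E(-1)$. Only subsequent blow-ups whose center meets $E$ can alter $\mathcal{N}_{E/\cdot}$, and since the dual graph of $\mu$ is a chain these centers are exactly the intersections of $E$ with its current adjacent divisor. By the inductive labeling from Section~\ref{subsec:m-sep-resolution}, there are precisely $n'$ such blow-ups on the $E_{(\kappa', r')}$-side, creating the divisors $E_{(\kappa' + j\kappa, r' + jr)}$ for $j = 1, \ldots, n'$, and symmetrically $n''$ on the $E_{(\kappa'', r'')}$-side, after which no further intersections involving $E$ remain.

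At the $j$-th such blow-up on the $E_{(\kappa', r')}$-side, the center $Z_j \coloneqq E \cap E_{(\kappa' + (j-1)\kappa, r' + (j-1)r)}$ is a Cartier divisor on $E$, so $\widetilde{E} \cong E$, and Lemma~\ref{lem:normal-bundle-strict-transform} yields
$$
\mathcal{N}_{E/X^{(\mathrm{new})}} \cong \mathcal{N}_{E/X^{(\mathrm{old})}} \otimes \mathcal{O}_E(-F_j \cap E),
$$
where $F_j$ is the new exceptional. The key geometric observation is that $F_j \cap E$, as a subvariety of $E$, coincides with the curve $Z_j$: indeed, the restriction of the blow-up morphism to $\widetilde{E}$ is the identity on $E$ and $F_j \cap \widetilde{E}$ projects isomorphically onto $Z_j$ via $F_j \to Z_j$. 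An immediate induction then gives $[F_j \cap E] = [\sigma_0(S)]$ as divisor classes on $E$, where $\sigma_0 : S \to E$ is the section corresponding to $E \cap E_{(\kappa', r')}$ identified in the proof of Proposition~\ref{prop:exceptional-divisor-topology}; analogously on the opposite side one obtains the section $\sigma_\infty$ corresponding to $E \cap E_{(\kappa'', r'')}$.

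To finish, I convert the section classes into line bundles on $E = \mathbb{P}(\mathcal{F})$. Composing the tautological inclusion $\mathcal{O}_E(-1) \hookrightarrow \pi^* \mathcal{F}$ with the projection onto the second summand of $\mathcal{F} = \mathcal{O}_S(\kappa' + r'd) \oplus \mathcal{O}_S(-\kappa'' - r''d)$ produces a section of $\mathcal{O}_E(1) \otimes \pi^*\mathcal{O}_S(-\kappa'' - r''d)$ whose vanishing locus is exactly $\sigma_0(S)$, so that $\mathcal{O}_E(\sigma_0(S)) \cong \mathcal{O}_E(1) \otimes \pi^*\mathcal{O}_S(-\kappa'' - r''d)$, and symmetrically $\mathcal{O}_E(\sigma_\infty(S)) \cong \mathcal{O}_E(1) \otimes \pi^*\mathcal{O}_S(\kappa' + r'd)$. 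Tensoring the initial $\mathcal{O}_E(-1)$ by $n'$ copies of $\mathcal{O}_E(\sigma_0(S))^{-1}$ and $n''$ copies of $\mathcal{O}_E(\sigma_\infty(S))^{-1}$ yields the stated formula. The main obstacle is the geometric identification $[F_j \cap E] = [\sigma_0(S)]$ throughout the induction: after each blow-up, the previous adjacent divisor's strict transform no longer meets $E$, so the former intersection has been transferred entirely to the freshly created $F_j$; pinpointing which section of the $\mathbb{P}^1$-bundle $F_j \to Z_j$ this transfer produces requires a careful local-coordinate argument exploiting the splitting $\mathcal{N}_{Z_j/W^{(\mathrm{old})}} = \mathcal{N}_{Z_j/E} \oplus \mathcal{N}_{Z_j/E_{(\kappa' + (j-1)\kappa, r' + (j-1)r)}}$, in parallel with the normal bundle computation in the proof of Proposition~\ref{prop:exceptional-divisor-topology}.
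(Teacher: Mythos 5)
Your proposal is correct and follows essentially the same route as the paper's own proof: start from $\mathcal{N}_{E/X^{(i_0)}}\cong\mathscr{O}_E(-1)$ after the creating blow-up, apply Lemma~\ref{lem:normal-bundle-strict-transform} at each of the $n'$ (resp. $n''$) subsequent blow-ups centered on the intersection of $E$ with its left (resp. right) neighbour, and identify $\mathscr{O}_E(\sigma'(S))\cong\mathscr{O}_E(1)\otimes\pi^*\mathscr{O}_S(-\kappa''-r''d)$ (and its mirror) via the tautological quotient sequence on $\mathbb{P}(\mathcal{F})$, exactly as in \eqref{eq:normal-bundle-adjacent-divisor}. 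The only difference is that you spell out the identification of $F_j\cap E$ with the fixed section class through the induction, a point the paper leaves implicit.
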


\begin{proof}
    Our strategy is to study how the normal bundle of $E$ changes after every blow-up in the sequence \eqref{eq:sequence-blow-ups}.
    Recall that we denoted by $E^{(i)} \subset X^{(i)}$ the image of $E$ under the composition $\mu^{(i+1)} \circ \cdots \circ \mu^{(\ell)}$.
    There is a smallest $i_0 \in \{ 1, \ldots, \ell \}$ such that $E^{(i_0)}$ is a divisor on $X^{(i_0)}$.
    Then, by construction, $\mu^{(i_0)}$ is the blow-up of $X^{(i_0 - 1)}$ at the intersection $E_{(\kappa', r')}^{(i_0 - 1)} \cap E_{(\kappa'', r'')}^{(i_0 - 1)}$ and $E^{(i_0)}$ is the exceptional divisor of $\mu^{(i_0)}$.
    Thus we know that $\mathcal{N}_{E^{(i_0)}/X^{(i_0)}} = \mathscr{O}_{E^{(i_0)}}(-1)$ by \cite[22.3.D.(b)]{vakil}.

    The center of every further blow-up $\mu^{(i+1)},\,i \geq i_0$ is either disjoint from $E^{(i)}$, in which case $\mathcal{N}_{E^{(i+1)}/X^{(i+1)}} = (\mu^{(i+1)})^* \mathcal{N}_{E^{(i)}/X^{(i)}}$, or it is the intersection $F \cap E^{(i)}$ of $E^{(i)}$ with one of the two divisors adjacent to it, in which case we can apply Lemma \ref{lem:normal-bundle-strict-transform}.
    All that remains is to compute the invertible sheaf $\mathscr{O}_{E^{(i)}}(-F \cap E^{(i)})$ appearing in said Lemma.

    Let $\mathcal{L}' = \mathscr{O}_S(\kappa' + r'd)$ and $\mathcal{L}'' = \mathscr{O}_S(-\kappa'' - r'' d)$, so $\mathcal{F} = \mathcal{L}' \oplus \mathcal{L}''$.
    Under the isomorphism $E^{(i)} \cong \mathbb{P}(\mathcal{F})$, the intersection of $E^{(i)}$ with the divisor to its left (in the sense of Figure \ref{fig:resolution}) is sent to to the image of the section $\sigma': S \to \mathbb{P}(\mathcal{F})$ defined by the line subbundle $\mathcal{L}' \oplus 0 \subset \mathcal{F}$. 
    Note that the composition
    \[
        \mathscr{O}_{\mathbb{P}(\mathcal{F})}(-1) \to \pi^* \mathcal{F} \to \pi^*(\mathcal{F}/(\mathcal{L}' \oplus 0)) \cong \pi^*(\mathcal{L}'')
    \]
    vanishes precisely at the points of $\mathbb{P}(\mathcal{F})$ given by the one-dimensional subspaces defined by $\mathcal{L}' \oplus 0$, i.e. at the image of the section $\sigma': S \to \mathbb{P}(\mathcal{F})$.
    Hence, seeing $\sigma(S)$ as a divisor on $\mathbb{P}(\mathcal{F})$ we have an isomorphism
    \begin{equation} \label{eq:normal-bundle-adjacent-divisor}
        \mathscr{O}_{\mathbb{P}(\mathcal{F})}(\sigma'(S)) 
        \cong \shom(\mathscr{O}_{\mathbb{P}(\mathcal{F})}(-1), \pi^*(\mathcal{L}'')) 
        \cong \mathscr{O}_{\mathbb{P}(\mathcal{F})}(1) \otimes \pi^*(\mathcal{L}'').
    \end{equation}
    The isomorphism $E^{(i)} \cong \mathbb{P}(\mathcal{F})$ sends $\mathscr{O}_{\mathbb{P}(\mathcal{F})}(\sigma'(S)) $ to $\mathscr{O}_{E^{(i)}}(F \cap E^{(i)})$, where $F$ is the divisor to the left of $E$. A similar argument works for the divisor to the right of $E$.

    All that is left to do is to count how many times during the sequence of blow-ups we have blown up the intersection with divisor on the left of $E$ and how many times the intersection with the divisor on the right.
    Since the divisor to the left of $E$ in $X^{(i_0)}$ is $E_{(\kappa', r')}$, after blowing up the intersection of $E$ with the divisor to the left $n'$ times, the resulting divisor on the left of $E$ has associated coprime pair $(\kappa' + n' \kappa, r' + n' r)$ (indeed, at the beginning the pair is $(\kappa', r')$ and each blow-up adds $(\kappa, r)$).
    Since we are denoting by $(\kappa^*, r^*)$ the pair associated to the divisor to the left of $E$ in $X$, we obtain the desired formula for $n'$.
    The analogous argument gives the formula for $n''$.
\end{proof}

\begin{remark}
    The coprime pairs $(\kappa', r')$ and $(\kappa'', r'')$ are not extra data that one needs to remember from the resolution process: they may be easily obtained from the continued fraction expansion of $\kappa/r$. Indeed, if $\kappa/r = [q_0; q_1, \ldots, q_k]$, then
    \[
        \kappa'/r' = [q_0; q_1, \ldots, q_{k-1}] \quad \text{ and } \quad
        \kappa''/r'' = [q_0; q_1, \ldots, q_{k-1}, q_k - 1].
    \]
\end{remark}

\begin{proposition} \label{prop:dlt-m-valuations}
    The dlt $m$-valuations of a semihomogeneous singularity of degree $d$ in $\CC^n$ have the following description in terms of the $m$-separating resolution $\mu: X \to \CC^n$:
    \begin{enumerate}[label=(\roman*)]
        \item If $d < n$, there are no dlt $m$-valuations.
        \item If $d \geq n$, then every $m$-divisor on $\mu: X \to \mathbb{C}^n$ gives a dlt $m$-valuation.
    \end{enumerate}
\end{proposition}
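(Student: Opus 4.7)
The plan is to exploit the explicit formulas for the multiplicities and log discrepancies from \eqref{eq:N-nu-formulas}, and then invoke the definition of a dlt modification from \cite[Def. 2.4]{irred-curve}.

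First, I would observe that every $m$-valuation centered over $\Sigma = \{0\}$ corresponds to one of the exceptional $m$-divisors $E_{-\lfloor m/d\rfloor}, \ldots, E_{-1}$ on $\mu$: the only other $m$-divisor on $\mu$ is the strict transform $E_0$, which does not lie over $\{0\}$; and any further blow-up of $\mu$ produces divisors of multiplicity strictly exceeding $m$ (hence not $m$-divisors). By \eqref{eq:N-nu-formulas}, each $E_{(\kappa, r)}$ with $r \geq 1$ has log discrepancy
\[
    \nu_{(\kappa, r)} - N_{(\kappa, r)} = (\kappa + rn) - (\kappa + rd) = r(n-d)
\]
with respect to the pair $(\CC^n, f^{-1}(0))$.

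For part (i), where $d < n$, this log discrepancy is strictly positive since $r \geq 1$, so none of the candidate divisors can be extracted by a dlt modification (which, by definition, only extracts divisors of log discrepancy $\leq 0$), immediately giving (i). For part (ii), where $d \geq n$, the log discrepancy is non-positive, and I would construct, for each $i \in \{1, \ldots, \lfloor m/d \rfloor\}$, an explicit dlt modification that extracts $E_{-i}$: the natural candidate is the intermediate variety $\mu_i: X_i \to \CC^n$ obtained by truncating the blow-up sequence \eqref{eq:sequence-blow-ups} at the step producing $E_{-i}$. Since $X_i$ is smooth (hence $\mathbb{Q}$-factorial), the reduced total transform is snc, and every exceptional divisor on $X_i$ has log discrepancy $\leq 0$, the morphism $\mu_i$ satisfies the conditions of a dlt modification, and $E_{-i}$ is one of its exceptional divisors, proving that $\mathrm{ord}_{E_{-i}}$ is a dlt $m$-valuation.

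The main technical obstacle I anticipate is verifying carefully that the intermediate varieties $X_i$ meet the precise definition of dlt modification in \cite[Def. 2.4]{irred-curve}; while $\mathbb{Q}$-factoriality and the snc condition are immediate from smoothness, one may need to check additional MMP-style conditions (such as a relative nef or relatively log canonical condition over $\CC^n$), though these should ultimately reduce to the log discrepancy computation carried out above together with standard facts about snc pairs.
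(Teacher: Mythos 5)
Your part (i) is essentially the paper's argument: all candidate divisors have log discrepancy $r(n-d)>0$ with respect to $(\CC^n,\mathrm{div}(f))$ when $d<n$, so none can be extracted by a dlt modification. (One caveat: your opening reduction, that every $m$-valuation over $\{0\}$ is realized by a divisor on $\mu$ itself, is not justified as stated --- further blow-ups can produce new $m$-divisors, e.g.\ blowing up a point of $E_{-1}^\circ$ yields a divisor of the same multiplicity. The paper instead invokes \cite[Lemma 3.4, Lemma 3.5]{irred-curve} to reduce the whole problem to minimal models over $\CC^n$ of one fixed $m$-separating log resolution.)

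The genuine gap is in part (ii), precisely at the step you flag and then dismiss. A dlt modification in the sense of \cite[Def.\ 2.4]{irred-curve} is a \emph{minimal model} over $\CC^n$, so one must verify that $K_{X_i}+\Delta_i$ is nef over $\CC^n$; this is a positivity condition on intersection numbers with \emph{all} contracted curves and does \emph{not} ``reduce to the log discrepancy computation together with standard facts about snc pairs.'' Having all log discrepancies $\le 0$ is necessary for the exceptional divisors to survive on a dlt modification, but it says nothing about relative nefness. This verification is the bulk of the paper's proof: it requires knowing the normal bundles $\mathcal{N}_{E_{(\kappa,r)}/X}$ explicitly (Lemma~\ref{lem:normal-bundle-computation}, which itself tracks how the normal bundle changes under each blow-up), decomposing an arbitrary contracted curve class in the $\mathbb{P}^1$-bundles $E_{(\kappa,r)}\to S$ as a multiple of a section plus fiber classes (Lemma~\ref{lem:moving-curves}), and then checking $(K_X+\Delta)\cdot C\ge 0$ for curves in $E_{(0,1)}\cong\mathbb{P}^{n-1}$ and the exact identity $(K_X+\Delta)\cdot F=0$ for fibers $F$ --- the latter equality (not just an inequality) is essential because the fiber coefficients in the decomposition of $[C]$ can be negative. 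None of this is present in your proposal, so as written the proof of (ii) is incomplete at its central step. Your idea of truncating the blow-up sequence at an intermediate $X_i$ also adds work rather than saving it, since each truncation would require its own nefness check; when $d\ge n$ the natural choice is the full resolution $X$ itself, since no exceptional divisor has positive log discrepancy and hence none needs to be contracted.
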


\begin{proof}
    Recall that $D=\mathrm{div}(f)$. 
    Let $\Delta \coloneqq \mu^*(D)_{\mathrm{red}}$.
    By \cite[Lemma 3.4, Lemma 3.5]{irred-curve}, to find all the dlt $m$-valuations we just need to study which $m$-divisors appear on the dlt modifications of $(\mathbb{C}^n,D)$ that are minimal models over $\mathbb{C}^n$ of a particular $m$-separating log resolution; in our case we choose $(X,\Delta)$, see \cite[Def. 2.4.(iii)]{irred-curve}. 
    In particular, (i) is equivalent to the statement that $(\mathbb{C}^n, D)$ is a minimal model of $(X, \Delta)$ over $(\mathbb{C}^n, D)$ for $d < n$, while (ii) is equivalent to the statement that $(X, \Delta)$ is its own minimal model over $(\mathbb{C}^n, D)$ whenever $d \geq n$.
    
    Recall that for any exceptional $E \in \mathcal{E}$, its \emph{log discrepancy with respect to $(\mathbb{C}^n, D)$}
    is $\nu_E - N_E$, see \cite[Def. 2.1]{irred-curve}.
    Therefore, $(\mathbb{C}^n, D)$ is a minimal model of $(X, \Delta)$ over $(\mathbb{C}^n, D)$ if and only if we have the inequality $N_E < \nu_E$ for every exceptional $E \in \mathcal{E}$. By \eqref{eq:N-nu-formulas}, this happens if and only if $d < n$. This proves (i).

    Now suppose $d \geq n$. 
    To show that $(X, \Delta)$ is its own minimal model over $(\mathbb{C}^n, D)$, we need to see that $K_X + \Delta = \sum_{E \in \mathcal{E}} \nu_E E$ is $\mu$-nef, i.e. that $(K_X + \Delta) \cdot C \geq 0$ for every curve $C \subset X$ contracted by $\mu$.

    Suppose first that $C$ is an irreducible curve contained in $E_{(0,1)}$. Note that the unique divisor $E_{(\kappa, r)}$ such that $E_{(0,1)} \cap E_{(\kappa, r)} \neq \varnothing$ necessarily has $\kappa = 1$ in its coprime pair. Then,
    \begin{align*}
        (K_X + \Delta) \cdot C &= \mathrm{deg}_C \mathscr{O}_X(\nu_{(0,1)} E_{(0,1)} + \nu_{(1,r)} E_{(1,r)}) \\
        &\stackrel{\text{\eqref{eq:N-nu-formulas}}}{=} n \cdot \mathrm{deg}_C \mathscr{O}_X(E_{(0,1)}) + (1+rn) \cdot \mathrm{deg}_C \mathscr{O}_X(E_{(1,r)}) \\
        &\stackrel{\phantom{a}}{=} n \cdot \mathrm{deg}_C \mathscr{O}_{\mathbb{P}^{n-1}}(-1) + (1+rn) \cdot \mathrm{deg}_C \mathscr{O}_{\mathbb{P}^{n-1}}(d) \\
        &\stackrel{\phantom{a}}{=} (-n + d + rnd) \cdot \mathrm{deg}_C \mathscr{O}_{\mathbb{P}^{n-1}}(1) \geq 0.
    \end{align*}
    In the first equality we used that $C$ does not intersect any divisors in $\mathcal{E}$ other than $E_{(0,1)}$ and $E_{(1,r)}$; in the second to last equality we used the isomorphism $E_{(0,1)} \cong \mathbb{P}^{n-1}$ and the induced isomorphisms
    \begin{align*}
        \mathscr{O}_X(E_{(0,1)})|_{E_{(0,1)}} &\cong \mathcal{N}_{E_{(0,1)}/X} \cong \mathscr{O}_{\mathbb{P}^{n-1}}(-1), \\
        \mathscr{O}_X(E_{(1,r)})|_{E_{(0,1)}} &\cong \mathscr{O}_{E_{(0,1)}}(E_{(0,1)} \cap E_{(1,r)}) \cong \mathscr{O}_{\mathbb{P}^{n-1}}(d).
    \end{align*}
    We know that the last line is non-negative because $d \geq n > 0$, $r \geq 0$ and the degree of a curve is non-negative.

    Now suppose $C$ is an irreducible curve contained in $E_{(\kappa,r)}$ for $(\kappa,r) \neq (0,1), (1,0)$. 
    Recall from the proof of Proposition \ref{prop:exceptional-divisor-topology} that $E_{(\kappa,r)}$ is the projectivization $\mathbb{P}(\mathcal{F})$ of the rank 2 vector bundle $\mathcal{F} = \mathcal{L}' \oplus \mathcal{L}''$ on $S$, where $\mathcal{L}' \cong O_S(-\kappa' - r'd)$ and $\mathcal{L}'' \cong O_S(\kappa'' + r'' d)$.
    These line subbundles of $\mathcal{F}$ define two sections $\sigma', \sigma'': S \to \mathbb{P}(\mathcal{F})$, whose images correspond to the intersection of $E_{(\kappa, r)}$ with the adjacent divisors in the chain of Figure~\ref{fig:resolution}.

    By Lemma \ref{lem:moving-curves}, we may write the rational equivalence class of $C$ as $[C] = \mathrm{deg}(\pi|_C) \cdot [\sigma'(\pi(C))] + \pi^* \alpha_0$ where $\alpha_0 \in A_0(S)$ and $\mathrm{deg}(\pi|_C) \geq 0$. 
    Since $\sigma'(\pi(C))$ is contained in a divisor adjacent to $E_{(\kappa,r)}$, we may apply this argument repeatedly to write $[C]$ as a the sum of a positive multiple of the class of a curve $C'$ contained in $E_{(0,1)}$ and an integer combination of fibers of the different $E_{(\kappa,r)}$ with $(\kappa, r) \neq (0,1)$.
    Since we already know that $(K_X + \Delta) \cdot C' \geq 0$ by the previous paragraph, it suffices to show that $(K_X + \Delta) \cdot F = 0$ for every fiber $F$ of the projection $\pi: E_{(\kappa,r)} \to S$ (note that it is \emph{not} enough to show $(K_X + \Delta) \cdot F \geq 0$ because its coefficient in the expression of $[C]$ might be negative).

    Hence, let $F$ be a fiber of $\pi: E_{(\kappa,r)} \to S$ and let $(\kappa^*, r^*)$ and $(\kappa^{**}, r^{**})$ be the coprime pairs associated to the two divisors adjacent to $E \coloneqq E_{(\kappa, r)}$ in the chain of Figure~\ref{fig:resolution}, with $E_{(\kappa^*, r^*)}$ being the one closer to $E_{(0,1)}$. We have
    \begin{align*}
        \mathscr{O}_X(E)|_E \cong \mathcal{N}_{E/X} &\cong \mathscr{O}_E(-1-n'-n'') \otimes \pi^* \mathscr{O}_S(a), \quad \text{for some $a \in \mathbb{Z}$} \\
        \mathscr{O}_X(E_{(\kappa^*, r^*)})|_E &\cong \mathscr{O}_E(1) \otimes \pi^* \mathscr{O}_S(\kappa^* + r^* d), \\
        \mathscr{O}_X(E_{(\kappa'', r'')})|_E &\cong \mathscr{O}_E(1) \otimes \pi^* \mathscr{O}_S(-\kappa'' - r'' d),
    \end{align*}
    where the first line comes from Lemma~\ref{lem:normal-bundle-computation} and the second and third lines come from \eqref{eq:normal-bundle-adjacent-divisor}. We have $\mathrm{deg}_F \mathscr{O}_E(1) = 1$ and $\mathrm{deg}_F \pi^* \mathscr{O}_S(k) = 0$ for every $k \in \mathbb{Z}$. Then,
    \begin{align*}
        (K_X + \Delta) \cdot F &= (\nu_{(\kappa^*, r^*)} E_{(\kappa^*, r^*)} + \nu_{(\kappa, r)} E_{(\kappa, r)} + \nu_{(\kappa'', r'')} E_{(\kappa'', r'')}) \cdot F \\
        &= \nu_{(\kappa^*, r^*)} + \nu_{(\kappa, r)} (-1 - n' - n'') + \nu_{(\kappa'', r'')} \\
        &= (\kappa^* + r^* n) - (\kappa' + r' n)(1 + n' + n'') - (\kappa'' + r'' n) = 0.
    \end{align*}
\end{proof}

Finally, we move on to the essential $m$-valuations.

\begin{proposition} \label{prop:minimal-m-separating-log-res}
    The map $\mu: X \to \mathbb{C}^n$ is the minimal $m$-separating log resolution of the triple $(\mathbb{C}^n, f, 0)$, in the sense that any other $m$-separating log resolution factors through $\mu$.
\end{proposition}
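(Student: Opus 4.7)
The plan is to proceed by induction on the length $\ell$ of the sequence of blow-ups \eqref{eq:sequence-blow-ups}, showing that every $m$-separating log resolution $\mu'\colon X'\to \CC^n$ of the triple $(\CC^n,f,0)$ factors through the composition $X^{(k)}\to \CC^n$ for every $k = 1,\dots,\ell$.

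For the base case $k=1$, the defining condition that $(\mu')^{-1}(0)$ be a normal crossings divisor on the smooth variety $X'$ implies that the ideal sheaf $(\mu')^{-1}(\mathfrak{m}_0)\cdot \mathcal{O}_{X'}$ is locally principal, so the universal property of the blow-up $\beta = \mu^{(1)}$ at the origin yields a unique factorization $X' \to X^{(1)} \to \CC^n$.

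For the inductive step, assume $\mu'$ factors as $\mu^{(1)}\circ \cdots \circ \mu^{(k)}\circ \rho_k$ with $\rho_k\colon X'\to X^{(k)}$. By construction, $\mu^{(k+1)}$ is the blow-up of a smooth codimension-two center $Z_k = E\cap F$, where $E, F$ are adjacent divisors on $X^{(k)}$ with $N_E+N_F \leq m$. By the universal property of blowing up a smooth center, it suffices to verify that $\rho_k^{-1}(\mathcal{I}_{Z_k})\cdot \mathcal{O}_{X'}$ is locally principal. I would work in local coordinates $u,v$ around $Z_k$ with $E = \{u=0\}$ and $F = \{v=0\}$, and, for each $p \in X'$ with $\rho_k(p) \in Z_k$ and each divisor $G$ of the total transform of $f^{-1}(0)$ through $p$, introduce the multiplicities $a_G = \mathrm{ord}_G(\rho_k^*u)$ and $b_G = \mathrm{ord}_G(\rho_k^*v)$; principality of $(\rho_k^*u,\rho_k^*v)$ at $p$ reduces to componentwise comparability of $(a_G)$ and $(b_G)$ over divisors through $p$.

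The main obstacle will be ruling out the obstruction case of two divisors $G_1, G_2$ through $p$ with $a_{G_1}>b_{G_1}$ and $a_{G_2}<b_{G_2}$. To handle it, I would use the relation $N_G = N_E a_G + N_F b_G$, coming from $f\circ \mu^{(1)}\circ\cdots\circ\mu^{(k)} = u^{N_E}v^{N_F}\cdot(\text{unit})$ near $Z_k$, combined with the $m$-separating condition on $\mu'$ applied both to the pair $G_1, G_2$ (yielding $N_{G_1}+N_{G_2}>m$) and to the strict transforms $\widetilde E, \widetilde F\subset X'$ (yielding $\widetilde E \cap \widetilde F = \varnothing$, thanks to $N_E + N_F \leq m$). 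A geometric case analysis on the type of each $G_i$ --- strict transform of $E$ or $F$, or exceptional divisor of $\rho_k$ mapping into $E\setminus Z_k$, into $F\setminus Z_k$, or into $Z_k$ --- together with the observation $\rho_k(p) \in Z_k$, is expected to eliminate all sub-cases. The most delicate one is when both $G_i$ are exceptional over $Z_k$; here one needs the sharper numerical estimate $N_{G_1}+N_{G_2}\geq (N_E+N_F)(a_{G_2}+b_{G_1}+1)$ together with tracking further divisors through $p$ via the dual-graph structure of the total transform on $X'$, which is where I expect the argument to be most intricate.
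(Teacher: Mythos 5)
Your overall architecture (induct along the tower \eqref{eq:sequence-blow-ups}, invoke the universal property of blowing up at each stage, and reduce the inductive step to local principality of $\rho_k^{-1}(\mathcal{I}_{Z_k})\cdot\mathcal{O}_{X'}$, i.e.\ to componentwise comparability of the exponent vectors $(a_G)$ and $(b_G)$) matches the paper's strategy and is in fact formulated more carefully, since the correct input to the universal property is invertibility of the pulled-back ideal rather than the weaker condition that the set $\rho_k^{-1}(Z_k)$ be a divisor. The identity $N_G=N_Ea_G+N_Fb_G$ is also correct, because the dual graph on $X^{(k)}$ is a chain, so no third component of the total transform meets $Z_k$.

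The gap is in the step where you rule out non-comparability. All the numerical information you propose to use points in the same direction: the $m$-separating condition on $\mu'$ gives \emph{lower} bounds $N_{G_1}+N_{G_2}>m$ for intersecting divisors, and your ``sharper estimate'' $N_{G_1}+N_{G_2}\ge (N_E+N_F)(a_{G_2}+b_{G_1}+1)$ is again a lower bound; two lower bounds cannot contradict each other. The only \emph{upper} bound in sight is $N_E+N_F\le m$, and it bites only if you can exhibit an actual intersection point of two divisors whose multiplicities sum to $N_E+N_F$ --- in practice, a point of $\widetilde{E}\cap\widetilde{F}$. The local data at $p$ does not produce such a point: in the configuration where $G_1,G_2$ are both exceptional over $Z_k$ with $a_{G_1}>b_{G_1}\ge 1$ and $b_{G_2}>a_{G_2}\ge1$, every pair of divisors through $p$ can have multiplicity sum far larger than $m$, so no contradiction is reachable from the numerology at $p$; and ``tracking further divisors via the dual-graph structure'' presupposes a blow-up-by-blow-up description of $X'$ over $X^{(k)}$ that an arbitrary $m$-separating log resolution does not come with. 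The paper closes exactly this gap by a global argument: if $\rho_k^{-1}(E\cap F)$ fails to be a divisor, it has an irreducible component $C$ of codimension at least $2$; at a suitable point of $C$ the map $\rho_k$ is a local isomorphism (Zariski's Main Theorem, via purity of the exceptional locus), so the strict transforms of $E$ and $F$ meet there, contradicting $m$-separation because $N_{\widetilde{E}}+N_{\widetilde{F}}=N_E+N_F\le m$. Some version of this step --- producing a point where $\widetilde{E}$ and $\widetilde{F}$ genuinely intersect --- is the missing idea in your plan.
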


\begin{proof}
    Let $\nu: Y \to \mathbb{C}^n$ be an arbitrary $m$-separating log resolution of the triple $(\mathbb{C}^n, f, 0)$.
    We are going to see that $\nu$ factors through each of the blow-ups in the sequence \eqref{eq:sequence-blow-ups} that defines $\mu$.

    By definition of log resolution of a triple, we know that $\nu^{-1}(0)$ is a normal crossing divisor. 
    Hence, by the universal property of the blow-up, $\nu$ factors through the blow-up of $\mathbb{C}^n$ at the origin, i.e. through $\mu^{(1)}$.
    
    Now suppose that we have already shown that $\nu$ factors through $\mu^{(i)}$ for some $i < \ell$.
    We want to apply once again the universal property of the blow-up to conclude that $\nu$ factors through $\mu^{(i+1)}$, which is the blow-up of $X^{(i)}$ at the intersection of two divisors $E$ and $F$. 
    To be able to do that, we just need to show that $\nu^{-1}(E \cap F)$ is a divisor on $Y$.

    Suppose that $\nu^{-1}(E \cap F)$ is not a divisor on $Y$, so there is some irreducible component $C$ of $\nu^{-1}(E \cap F)$ with $\mathrm{codim}_Y C \geq 2$.
    Then there is some point $y \in C$ such that the restriction of $\nu$ to a neighborhood of $y$ is an isomorphism (this is Zariski's Main Theorem, see \cite[Prop. III.9.1]{mumford1999}).
    In particular, that the strict transforms of $E$ and $F$ on $Y$ intersect at $\nu(y)$.
    
    On the other hand, we know that $N_E + N_F \leq m$, since otherwise we would not have blown up the intersection of $E$ and $F$ in the construction of $\mu$.
    Hence, since $\nu$ is $m$-separating, the strict transforms of $E$ and $F$ on $Y$ (which exist because we are assuming that $\nu$ factors through $\mu^{(i)}$) do not intersect. This is a contradiction, so $\nu^{-1}(E \cap F)$ must be a divisor on $Y$ and we are done.
\end{proof}

\begin{corollary} \label{cor:essential-m-valuations}
    The essential $m$-valuations of a semihomogeneous singularity of degree $d$ in $\CC^n$ are precisely the ones given by $m$-divisors on the $m$-separating resolution $\mu: X \to \CC^n$.    
\end{corollary}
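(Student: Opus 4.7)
The whole statement is a quick corollary of Proposition~\ref{prop:minimal-m-separating-log-res}, which asserts that every $m$-separating log resolution $\nu:Y\to\CC^n$ of the triple $(\CC^n,f,0)$ factors through $\mu$ via a birational morphism $\psi:Y\to X$. Since essential $m$-valuations by definition have center over $\Sigma=\{0\}$, the $m$-divisors on $\mu$ that are in play are exactly the exceptional ones (i.e.\ those with $r\geq 1$ in the notation of \S\ref{subsec:m-sep-resolution}), and there are indeed $\lfloor m/d\rfloor$ of them, matching the count announced in Theorem~B.

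For the forward direction, I would pick an exceptional $m$-divisor $E$ on $\mu$ and check that $\mathrm{ord}_E$ is essential. Given any other $m$-separating log resolution $\nu=\mu\circ\psi$, the birational morphism $\psi:Y\to X$ is an isomorphism over the generic point of $E$, so the strict transform $\widetilde{E}\subset Y$ is a prime divisor, it defines the same divisorial valuation $\mathrm{ord}_E$, and it lies over $\Sigma=\{0\}$ because $\psi(\widetilde{E})=E$ does. Thus the center of $\mathrm{ord}_E$ is a prime divisor over $\Sigma$ on every $m$-separating resolution.

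For the converse, I would take an essential $m$-valuation $v$ and exhibit an $m$-divisor $E$ on $\mu$ with $v=\mathrm{ord}_E$. Applying the essentiality hypothesis to $\mu$ itself, the center of $v$ on $X$ is a prime divisor $E$ lying over $\Sigma$, so $E$ is exceptional. Since $X$ is smooth, $\mathcal{O}_{X,E}$ is a DVR, and the valuation ring of $v$ dominates it; as both are DVRs inside the common function field $\CC(\CC^n)$, they coincide and $v=\mathrm{ord}_E$. It remains to check $N_E\mid m$. By definition of an $m$-valuation, $v=\mathrm{ord}_{E'}$ for some $m$-divisor $E'$ on some $m$-separating log resolution, so
\[
N_E=\mathrm{ord}_E(f)=v(f)=\mathrm{ord}_{E'}(f)=N_{E'}
\]
divides $m$, and $E$ is an $m$-divisor on $\mu$.

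There is no real obstacle here: all the birational geometry has already been packaged into Proposition~\ref{prop:minimal-m-separating-log-res}. The only point to be careful about is the standard fact that a divisorial valuation on a smooth variety is determined by its center as soon as that center is a prime divisor, which is what lets us upgrade ``center of $v$ on $X$ is $E$'' to ``$v=\mathrm{ord}_E$''.
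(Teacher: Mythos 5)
Your proof is correct and takes the same route the paper intends: the paper gives no explicit argument for Corollary~\ref{cor:essential-m-valuations}, treating it as an immediate consequence of Proposition~\ref{prop:minimal-m-separating-log-res}, and your writeup is a faithful fleshing-out of exactly that deduction (including the correct observation that only the \emph{exceptional} $m$-divisors are in play, since the strict transform does not lie over $\Sigma=\{0\}$).
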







\printbibliography

@article{mclean,
  author    = {M. McLean},
  title     = {{Floer cohomology, multiplicity and the log canonical threshold}},
  volume    = {23},
  journal   = {Geometry \& Topology},
  number    = {2},
  publisher = {MSP},
  pages     = {957-1056},
  year      = {2016},
  doi       = {10.2140/gt.2019.23.957}
}

@article{arc-floer-curves,
  title         = {The Arc-Floer conjecture for plane curves},
  author        = {Javier de la Bodega and Eduardo de Lorenzo Poza},
  year          = {2023},
  eprint        = {2308.00051},
  archiveprefix = {arXiv},
  primaryclass  = {math.AG},
  url           = {https://arxiv.org/abs/2308.00051}
}

@article{coho-contact,
  author   = { Budur, N. and Fern{\'a}ndez de Bobadilla, J. and L{\^e}, {Q. T}.  and Nguyen, {H. D}.},
  label    = {B+},
  title    = {Cohomology of contact loci},
  fjournal = {Journal of Differential Geometry},
  journal  = {J. Differ. Geom.},
  volume   = {120},
  number   = {3},
  pages    = {389--409},
  year     = {2022},
  doi      = {10.4310/jdg/1649953456},
  keywords = {14E18,32S55,32S35},
  zbmath   = {7523602},
  zbl      = {1495.14026}
}

@article{irred-curve,
  author   = {Budur, Nero and de la Bodega, Javier and de Lorenzo Poza, Eduardo and Fern{\'a}ndez de Bobadilla, Javier and Pe{\l}ka, Tomasz},
  title    = {On the embedded {Nash} problem},
  fjournal = {Forum of Mathematics, Pi},
  journal  = {Forum Math. Pi},
  volume   = {12},
  pages    = {28},
  note     = {Id/No e15},
  year     = {2024},
  doi      = {10.1017/fmp.2024.13},
  keywords = {14E18,14B05,32S25,14E30},
  zbmath   = {7939752}
}

@article{bobadilla2024,
  author   = {Fern{\'a}ndez de Bobadilla, Javier and Pe{\l}ka, Tomasz},
  title    = {Symplectic monodromy at radius zero and equimultiplicity of {{\(\mu\)}}-constant families},
  fjournal = {Annals of Mathematics. Second Series},
  journal  = {Ann. Math. (2)},
  volume   = {200},
  number   = {1},
  pages    = {153--299},
  year     = {2024},
  doi      = {10.4007/annals.2024.200.1.4},
  keywords = {14B05,14J17,32S25,32S30,32S55,53D40},
  zbmath   = {7995671}
}

@article{Uljarevic,
  author  = {Uljarevic, I.},
  year    = {2017},
  pages   = {861-903},
  title   = {{F}loer homology of automorphisms of {L}iouville domains},
  volume  = {15},
  journal = {Journal of Symplectic Geometry},
  doi     = {10.4310/JSG.2017.v15.n3.a9}
}

@article{jet-arc,
  author     = {Ishii, Shihoko},
  title      = {Jet schemes, arc spaces and the {N}ash problem},
  journal    = {C. R. Math. Acad. Sci. Soc. R. Can.},
  fjournal   = {Comptes Rendus Math\'ematiques de l'Acad\'emie des Sciences.
                La Soci\'et\'e{} Royale du Canada. Mathematical Reports of the
                Academy of Science. The Royal Society of Canada},
  volume     = {29},
  year       = {2007},
  number     = {1},
  pages      = {1--21},
  mrclass    = {14D15 (14D20 14J17)},
  mrnumber   = {2354631},
  mrreviewer = {Tommaso\ De Fernex}
}

@article{DL-motivic,
  author   = {Denef, Jan and Loeser, Fran{\c{c}}ois},
  title    = {Motivic {Igusa} zeta functions},
  fjournal = {Journal of Algebraic Geometry},
  journal  = {J. Algebr. Geom.},
  volume   = {7},
  number   = {3},
  pages    = {505--537},
  year     = {1998},
  keywords = {14G10,14F42,11S40},
  zbmath   = {1353475},
  zbl      = {0943.14010}
}

@article{DL-lefnum,
  author   = {Denef, Jan and Loeser, Fran{\c{c}}ois},
  title    = {Lefschetz numbers of iterates of the monodromy and truncated arcs},
  fjournal = {Topology},
  journal  = {Topology},
  volume   = {41},
  number   = {5},
  pages    = {1031--1040},
  year     = {2002},
  doi      = {10.1016/S0040-9383(01)00016-7},
  keywords = {14B05,14J17,32S25,32S55},
  zbmath   = {1801890},
  zbl      = {1054.14003}
}

@incollection{seidel,
  author     = {Seidel, Paul},
  title      = {More about vanishing cycles and mutation},
  booktitle  = {Symplectic geometry and mirror symmetry ({S}eoul, 2000)},
  pages      = {429--465},
  publisher  = {World Sci. Publ., River Edge, NJ},
  year       = {2001},
  isbn       = {981-02-4714-1},
  mrclass    = {53D40 (18E30)},
  mrnumber   = {1882336},
  mrreviewer = {Mikhail\ G.\ Khovanov},
  doi        = {10.1142/9789812799821\_0012}
}

@article{arapura2005,
  author   = {Arapura, Donu},
  title    = {The {Leray} spectral sequence is motivic},
  fjournal = {Inventiones Mathematicae},
  journal  = {Invent. Math.},
  volume   = {160},
  number   = {3},
  pages    = {567--589},
  year     = {2005},
  doi      = {10.1007/s00222-004-0416-x},
  keywords = {14D07,14J30},
  zbmath   = {2175702},
  zbl      = {1083.14011}
}

@misc{stacks-project,
  shorthand = {Stacks},
  author    = {{The Stacks Project Authors}},
  title     = {\textit{Stacks Project}},
  url       = {https://stacks.math.columbia.edu},
  year      = {2018}
}

@book{dimca1992,
  author     = {Dimca, Alexandru},
  title      = {Singularities and topology of hypersurfaces},
  series     = {Universitext},
  publisher  = {Springer-Verlag, New York},
  year       = {1992},
  pages      = {xvi+263},
  isbn       = {0-387-97709-0},
  mrclass    = {32Sxx (14J70 32S25 32S50 57M25 57R45 58C27)},
  mrnumber   = {1194180},
  mrreviewer = {Aleksandr\ G.\ Aleksandrov},
  doi        = {10.1007/978-1-4612-4404-2}
}

@article{ein2004,
  title     = {Contact loci in arc spaces},
  volume    = {140},
  doi       = {10.1112/S0010437X04000429},
  number    = {5},
  journal   = {Compositio Mathematica},
  publisher = {London Mathematical Society},
  author    = {Ein, L. and Lazarsfeld, R. and Mustaţa, M.},
  year      = {2004},
  pages     = {1229–1244}
}

@book{milnor1974,
  author     = {Milnor, John W. and Stasheff, James D.},
  title      = {Characteristic classes},
  series     = {Annals of Mathematics Studies},
  volume     = {No. 76},
  publisher  = {Princeton University Press, Princeton, NJ},
  year       = {1974},
  pages      = {vii+331},
  mrclass    = {57-01 (55-02 55F40 57D20)},
  mrnumber   = {440554},
  mrreviewer = {F.\ Hirzebruch}
}

@misc{qiaochuyuan,
  title  = {Hypersurfaces, 4-manifolds, and characteristic classes},
  author = {Yuan, Qiaochu},
  url    = {https://qchu.wordpress.com/2014/06/16/hypersurfaces-4-manifolds-and-characteristic-classes/},
  note   = {(version: 2014-06-16)}
}

@misc{MO-Lefschetz-map,
  title        = {Lefschetz map in the middle cohomology of a smooth projective hypersurface},
  author       = {Will Sawin},
  label        = {Saw25},
  howpublished = {MathOverflow},
  url          = {https://mathoverflow.net/q/486487},
  note         = {(version: 2025-01-23)}
}

@book{hartshorne,
  author     = {Hartshorne, Robin},
  title      = {Algebraic geometry},
  series     = {Graduate Texts in Mathematics},
  volume     = {No. 52},
  publisher  = {Springer-Verlag, New York-Heidelberg},
  year       = {1977},
  pages      = {xvi+496},
  isbn       = {0-387-90244-9},
  mrclass    = {14-01},
  mrnumber   = {463157},
  mrreviewer = {Robert\ Speiser}
}

@book{fulton-intersection,
  author    = {Fulton, William},
  title     = {Intersection theory},
  edition   = {2nd ed.},
  publisher = {Springer-Verlag, Berlin},
  year      = {1998},
  pages     = {xiv+470},
  isbn      = {3-540-62046-X; 0-387-98549-2},
  mrclass   = {14C17 (14-02)},
  mrnumber  = {1644323},
  doi       = {10.1007/978-1-4612-1700-8}
}

@misc{MO-fiberbundlecomposition,
  title        = {Composition of covering map and bundle projection},
  author       = {Torsten Ekedahl},
  label        = {Ek11},
  howpublished = {MathOverflow},
  note         = {(version: 2011-08-25)},
  url          = {https://mathoverflow.net/q/73546}
}

@book{vakil,
  author    = {Vakil, Ravi},
  title     = {The rising sea. {Foundations} of algebraic geometry (to appear)},
  isbn      = {978-0-691-26866-8; 978-0-691-26867-5; 978-0-691-26868-2},
  year      = {2025},
  publisher = {Princeton, NJ: Princeton University Press},
  language  = {English},
  keywords  = {14-01},
  zbmath    = {7961837}
}

@book{milnor1968,
  author     = {Milnor, John},
  title      = {Singular points of complex hypersurfaces},
  series     = {Annals of Mathematics Studies},
  volume     = {No. 61},
  publisher  = {Princeton University Press, Princeton, NJ},
  year       = {1968},
  pages      = {iii+122},
  mrclass    = {57.20 (14.00)},
  mrnumber   = {239612},
  mrreviewer = {J.\ P.\ Levine}
}

@article{dostoglou-salamon94,
  author     = {Dostoglou, Stamatis and Salamon, Dietmar A.},
  title      = {Self-dual instantons and holomorphic curves},
  journal    = {Ann. of Math. (2)},
  fjournal   = {Annals of Mathematics. Second Series},
  volume     = {139},
  year       = {1994},
  number     = {3},
  pages      = {581--640},
  mrclass    = {58E15 (53C07 57R57 58D15 58D27)},
  mrnumber   = {1283871},
  mrreviewer = {S.\ Donaldson},
  doi        = {10.2307/2118573}
}

@article{seade2019,
  author   = {Seade, Jos{\'e}},
  title    = {On {Milnor}'s fibration theorem and its offspring after 50 years},
  fjournal = {Bulletin of the American Mathematical Society. New Series},
  journal  = {Bull. Am. Math. Soc., New Ser.},
  volume   = {56},
  number   = {2},
  pages    = {281--348},
  year     = {2019},
  doi      = {10.1090/bull/1654},
  keywords = {32-03,14-03,57-03,01A60,01A61,32Sxx,14Bxx,57M27,57M50},
  zbmath   = {7045323},
  zbl      = {1416.32001}
}

@book{chambertloir2018,
  author    = {Chambert-Loir, Antoine and Nicaise, Johannes and Sebag, Julien},
  title     = {Motivic integration},
  fseries   = {Progress in Mathematics},
  series    = {Prog. Math.},
  volume    = {325},
  isbn      = {978-1-4939-7885-4; 978-1-4939-9315-4},
  year      = {2018},
  publisher = {New York, NY: Birkh{\"a}user},
  doi       = {10.1007/978-1-4939-7887-8},
  keywords  = {14E18,14-02},
  zbmath    = {6862764},
  zbl       = {1545.14021}
}

@article{contact-hyperplane,
  author   = {Budur, Nero and Tue, Tran Quang},
  title    = {On contact loci of hyperplane arrangements},
  fjournal = {Advances in Applied Mathematics},
  journal  = {Adv. Appl. Math.},
  volume   = {132},
  pages    = {28},
  note     = {Id/No 102271},
  year     = {2022},
  doi      = {10.1016/j.aam.2021.102271},
  keywords = {14E18,32S22},
  zbmath   = {7430808},
  zbl      = {1483.14028}
}

@article{bhatt2016,
  author   = {Bhatt, Bhargav},
  title    = {Algebraization and {Tannaka} duality},
  fjournal = {Cambridge Journal of Mathematics},
  journal  = {Camb. J. Math.},
  volume   = {4},
  number   = {4},
  pages    = {403--461},
  year     = {2016},
  doi      = {10.4310/CJM.2016.v4.n4.a1},
  keywords = {14A20},
  zbmath   = {6670053},
  zbl      = {1356.14006}
}

@article{ishii2004,
  author   = {Ishii, Shihoko},
  title    = {The arc space of a toric variety},
  fjournal = {Journal of Algebra},
  journal  = {J. Algebra},
  volume   = {278},
  number   = {2},
  pages    = {666--683},
  year     = {2004},
  doi      = {10.1016/j.jalgebra.2003.12.015},
  keywords = {14M25,14L15},
  zbmath   = {2105350},
  zbl      = {1073.14066}
}

@article{delabodega2025,
  title         = {The embedded {N}ash problem in singular spaces: the case of surfaces},
  author        = {Javier de la Bodega},
  year          = {2025},
  eprint        = {2408.01533},
  archiveprefix = {arXiv},
  primaryclass  = {math.AG},
  url           = {https://arxiv.org/abs/2408.01533}
}

@article{defernex2016,
  author   = {de Fernex, Tommaso and Docampo, Roi},
  title    = {Terminal valuations and the {Nash} problem},
  fjournal = {Inventiones Mathematicae},
  journal  = {Invent. Math.},
  volume   = {203},
  number   = {1},
  pages    = {303--331},
  year     = {2016},
  doi      = {10.1007/s00222-015-0597-5},
  keywords = {14E18,14E30,14J17},
  zbmath   = {6541953},
  zbl      = {1345.14020}
}

@book{mumford1999,
  author    = {Mumford, David},
  title     = {The red book of varieties and schemes. {Includes} the {Michigan} lectures (1974) on ``{Curves} and their {Jacobians}''.},
  edition   = {2nd, expanded ed. with contributions by {Enrico} {Arbarello}},
  fseries   = {Lecture Notes in Mathematics},
  series    = {Lect. Notes Math.},
  volume    = {1358},
  isbn      = {3-540-63293-X},
  year      = {1999},
  publisher = {Berlin: Springer},
  language  = {English},
  doi       = {10.1007/b62130},
  keywords  = {14A10,14H42,14K25,14-02,14-01,14A15},
  zbmath    = {1252431},
  zbl       = {0945.14001}
}

@book{spanier1966,
  author     = {Spanier, Edwin H.},
  title      = {Algebraic topology},
  publisher  = {McGraw-Hill Book Co., New York-Toronto-London},
  year       = {1966},
  pages      = {xiv+528},
  mrclass    = {55.00},
  mrnumber   = {210112},
  mrreviewer = {S.-T.\ Hu}
}

@book{arnold1985,
  author    = {Arnol'd, V. I. and Guse\u in-Zade, S. M. and Varchenko,
               A. N.},
  title     = {Singularities of differentiable maps. {V}ol. {I}},
  series    = {Monographs in Mathematics},
  volume    = {82},
  note      = {The classification of critical points, caustics and wave
               fronts,
               Translated from the Russian by Ian Porteous and Mark Reynolds},
  publisher = {Birkh\"auser Boston, Inc., Boston, MA},
  year      = {1985},
  pages     = {xi+382},
  isbn      = {0-8176-3187-9},
  mrclass   = {58C27},
  mrnumber  = {777682},
  doi       = {10.1007/978-1-4612-5154-5},
  url       = {https://doi.org/10.1007/978-1-4612-5154-5}
}

\noindent
\textsc{Eduardo de Lorenzo Poza}. \url{eduardo.delorenzopoza@kuleuven.be} \\
Basque Center for Applied Mathematics. Alameda Mazarredo 14, 48009 Bilbao, Spain. \\
KU Leuven, Department of Mathematics. Celestijnenlaan 200B, 3001 Heverlee, Belgium.

\noindent
\textsc{Jiahui Huang}. 
\url{j346huan@uwaterloo.ca} \\
University of Waterloo.
Department of Pure Mathematics. \\
200 University Ave W.,
Waterloo, ON N2L 3G1,
Canada.

\end{document}